\newcommandx{\attn}[2][1=]{\todo[linecolor=red,backgroundcolor=blue!25,bordercolor=red,#1]{#2}}
\newcommandx{\other}[2][1=]{\todo[linecolor=OliveGreen,backgroundcolor=OliveGreen!25,bordercolor=OliveGreen,#1]{#2}}
\newcommandx{\thiswillnotshow}[2][1=]{\todo[disable,#1]{#2}}
\patchcmd{\@maketitle}{\LARGE \@title}{\LARGE\bfseries\@title}{}{}
\renewcommand{\@seccntformat}[1]{\csname the#1\endcsname.\quad}
\definecolor{myviolet}{rgb}{.7,0,1}
\definecolor{darkblue}{rgb}{0,0,.5}
\def\th@plain{%
	\thm@notefont{}
	\itshape 
}
\def\th@definition{%
	\thm@notefont{}
	\normalfont 
}
\renewenvironment{proof}[1][\proofname]{\par
	\normalfont
	\topsep0\p@\@plus3\p@ \trivlist
	\item[\hskip\labelsep\itshape
	#1\@addpunct{.}]\ignorespaces
}{%
	\qed\endtrivlist
}
\newtheorem{theorem}{Theorem}[section]
\newtheorem{lemma}{Lemma}[section]
\newtheorem{corollary}{Corollary}[section]
\newtheorem{proposition}{Proposition}[section]
\theoremstyle{definition}
\newtheorem{definition}{Definition}[section]
\theoremstyle{definition}
\newtheorem{example}{Example}[section]
\theoremstyle{definition}
\newtheorem{remark}{Remark}[section]
\setlist[enumerate]{nosep}
\newcommand{\scal}[2]{\left\langle {#1},{#2} \right\rangle}
\newcommand{\RR}{\ensuremath{\mathbb R}}
\newcommand{\RP}{\ensuremath{\mathbb{R}_+}}
\newcommand{\argmin}{\ensuremath{\operatorname*{argmin}}}
\newcommand{\inte}{\ensuremath{\operatorname{int}}}
\newcommand{\ran}{\ensuremath{\operatorname{ran}}}
\newcommand{\dom}{\ensuremath{\operatorname{dom}}}
\newcommand{\epi}{\ensuremath{\operatorname{epi}}}
\newcommand{\Id}{\ensuremath{\operatorname{Id}}}
\newcommand{\ent}{\ensuremath{\operatorname{ent}}}
\newcommand{\prox}{\ensuremath{\operatorname{Prox}}}
\newcommand{\lev}[1]{\ensuremath{\mathrm{lev}_{\leq #1}\:}}
\newcommand{\fenv}[2][]%
{\ensuremath{\,\overrightarrow{\operatorname{env}}_{#2}}^{#1}}
\newcommand{\benv}[2][]%
{\ensuremath{\,\overleftarrow{\operatorname{env}}_{#2}^{#1}}}
\newcommand{\env}[2][]%
{\ensuremath{{\operatorname{env}}_{#2}^{#1}}}
\newcommand{\W}{\mathcal W}
\newcommand{\bP}[2][]%
{\ensuremath{\,\overleftarrow{\operatorname{P}}_{#2}^{#1}}}
\newcommand{\fP}[2][]%
{\ensuremath{\,\overrightarrow{\operatorname{P}}_{#2}^{#1}}}
\newcommand{\fprox}[1]{\overrightarrow{\operatorname{P}}_%
	{\negthinspace\negthinspace#1}}
\newcommand{\fproxs}[1]{\overrightarrow{\operatorname{s}}_%
	{\negthinspace\negthinspace#1}}
\newcommand{\bprox}[1]{\overleftarrow{\operatorname{P}}_%
	{\negthinspace\negthinspace #1}}
\newcommand{\bproxs}[1]{\overleftarrow{\operatorname{s}}_%
	{\negthinspace\negthinspace #1}}
\newcommand{\trimplot}[1]{%
	\begin{adjustbox}{trim={0.1\width} {0.16\height} {0.2\width} {0.16\height},clip=true}
		\includegraphics[width=.4\textwidth]{#1}
	\end{adjustbox}%
}
\begin{document}

\title{\sf Generalized Bregman envelopes and proximity operators}

\author{
Regina S.\ Burachik\thanks{
Mathematics, UniSA STEM, University of South Australia, Mawson Lakes, SA 5095, Australia.
E-mail: \texttt{regina.burachik@unisa.edu.au}.},~
Minh N.\ Dao\thanks{
School of Engineering, Information Technology and Physical Sciences, Federation University Australia, Ballarat, VIC 3353, Australia.
E-mail: \texttt{m.dao@federation.edu.au}.},~~and~
Scott B.\ Lindstrom\thanks{
Department of Applied Mathematics, Hong Kong Polytechnic University, Hong Kong.
E-mail: \texttt{scott.lindstrom@curtin.edu.au}.}
}

\date{}

\maketitle

\begin{abstract} \noindent
Every maximally monotone operator can be associated with a family of convex functions, called the \emph{Fitzpatrick family} or \emph{family of representative functions}. Surprisingly, in 2017, Burachik and Mart\'inez-Legaz showed that the well-known Bregman distance is a particular case of a general family of distances, each one induced by a specific maximally monotone operator and a specific choice of one of its representative functions. For the family of generalized Bregman distances, sufficient conditions for convexity, coercivity, and supercoercivity have recently been furnished. Motivated by these advances, we introduce in the present paper the generalized left and right envelopes and proximity operators, and we provide asymptotic results for parameters. Certain results extend readily from the more specific Bregman context, while others only extend for certain generalized cases. To illustrate, we construct examples from the Bregman generalizing case, together with the natural ``extreme'' cases that highlight the importance of which generalized Bregman distance is chosen.
\end{abstract}

{\small 
\noindent
{\bfseries 2020 Mathematics Subject Classification:}
Primary 90C25;
Secondary  26A51, 26B25, 47H05, 47H09.

\noindent
{\bfseries Keywords:}
convex function,
Fitzpatrick function,
generalized Bregman distance,
maximally monotone operator, 
Moreau envelope,
proximity operator,
regularization,
representative function.
}

\section{Introduction}

In this paper, unless stated otherwise, $\left( X,\left\Vert \cdot \right\Vert \right) $ is a reflexive Banach space with dual $\left( X^\ast,\left\Vert \cdot \right\Vert_\ast \right)$, and $\Gamma_0(X)$ is the set of all proper lower semicontinuous convex functions from $X$ to $\left]-\infty, +\infty\right]$. 

In 1962, Moreau \cite{Mor62} introduced what has come to be known as the Moreau envelope,
\begin{equation}
\env{\gamma,\theta}\colon X\to \left[-\infty, +\infty\right] \colon
y\mapsto \inf_{x\in X} \left \{ \theta(x) +\frac{1}{\gamma}\mathcal{D}_{\|\cdot \|^2/2}(x,y) \right \}
\end{equation}
and its corresponding proximity operator 
\begin{equation}\label{prox-energy}
\prox_{\gamma,\theta}\colon X\to X \colon
y\mapsto \argmin_{x\in X} \left \{\theta(x) +\frac{1}{\gamma}\mathcal{D}_{\|\cdot \|^2/2}(x,y) \right \},
\end{equation}
where $\mathcal{D}_{\|\cdot \|^2/2}\colon (x,y)\mapsto \|x-y\|^2/2$. Moreau worked in a Hilbert space and with parameter $\gamma=1$, and then, Attouch \cite{Attouch77,Attouch84} introduced the more general parameter $\gamma \in \left[0,+\infty\right[$; see also \cite[Chapter~12]{BC17}. 

In 1967, Bregman \cite{Bre67} introduced the distance associated with a differentiable convex function $f$, 
\begin{equation} 
\label{e:Df}
\mathcal{D}_f \colon X \times X \to \left[0, +\infty\right] \colon (x,y) \mapsto
\begin{cases}
f(x) -f(y) -\scal{\nabla f(y)}{x -y} &\text{~if~} y\in \inte \dom f, \\
+\infty &\text{~otherwise},
\end{cases}
\end{equation} 
which now bears his name. When $f$ is the \emph{energy}, namely $f =\|\cdot\|^2/2$, it is clear that, for all $x,y\in X$, $D_f(x,y) =\|x-y\|^2/2$ is the Euclidean distance squared. When $f$ is not the energy, the distance may fail to be symmetric and so one is led to consider the left and right envelopes defined by 
\begin{subequations}\label{d:bregman_envelopes}
\begin{align}
\label{e:bba}
\benv{\gamma,\theta} \colon X\to \left[-\infty, +\infty\right] \colon
y&\mapsto \inf_{x\in X} \left \{\theta(x) +\frac{1}{\gamma}\mathcal{D}_f(x,y) \right \}\\
\label{e:fba} \text{and}\quad
\fenv{\gamma,\theta} \colon X \to \left[-\infty, +\infty\right] \colon
x &\mapsto
\inf_{y\in X} \left \{\theta(y) +\frac{1}{\gamma}\mathcal{D}_f(x,y) \right \},
\end{align}
\end{subequations}
where the left and right proximity operators are defined as in \eqref{prox-energy}, with $\mathcal{D}_f$ in place of $\mathcal{D}_{\| \cdot \|^2/2}$. 

The asymptotic properties of the envelopes and proximity operators for differentiable $f$ with respect to the parameter $\gamma$ were explored in \cite{BDL17}. Bregman distances admit proximal point methods while also casting light on those constructed from the classical Moreau envelopes; see, for example, \cite{BB97,BCN06,ReginaBreg2010,ReginaBreg98,BurKas12,BC01,CZ92,CT93,Eck93,Kiw97}, as well as \cite[Chapter~6]{ReginaBook2008}. 

Recently, Burachik and Mart\'inez-Legaz \cite{BM-L18} have introduced two distances based on a representative function $h$ of a maximally monotone operator $T$:
\begin{subequations}
\begin{align}
\mathcal{D}_T^{\flat,h}(x,y) &:=\inf_{v\in Ty}\left( h(x,v)-\langle
x,v\rangle \right)\\ 
\text{and}\quad \mathcal{D}_T^{\sharp,h}(x,y) &:=\sup_{v\in Ty}\left( h(x,v)-\langle
x,v\rangle \right).
\end{align}
\end{subequations}
When $T =\nabla f$ and $h(x,\nabla f(y)) =(f \oplus f^\ast)(x,\nabla f(y)) :=f(x)+f^\ast(\nabla f(y))$ (which is the Fenchel--Young representative), these distances reduce (under mild domain conditions), to the Bregman distance $\mathcal{D}_f(x,y)$. We therefore call these more general distances as the \emph{generalized Bregman distances} (or \emph{GBDs}).

Bregman proximal methods may be desirable for problems of high dimension. For certain objectives, using a different Bregman distance than the square norm reduces the computational complexity of computing a proximal update. For an example of reduction from $n^3$ to $n^2$ with SDP-representable constraints, see \cite{chao2018entropic,jiang2021bregman}). Their work illustrates that a well selected Bregman distance should complement the problem structure. To that end, choices from the broader class of GBDs may turn out to be useful.

In \cite{chretien2016bregman}, the authors devise a prox-Bregman technique for feature extraction in machine learning, with an application to gene expression problems, wherefore outliers are present while data is noisy and sometimes missing. The latter properties are known to prevent other approaches---such as principal component analysis, cluster analysis, and polytomic logistic regression---from delivering useful results. In particular, the Bregman distance is built from the Boltzmann--Shannon entropy, because this choice naturally enforces nonnegativity constraints. This distance is equal almost everywhere to the GBD for the Fenchel--Young representative of the logarithm; we study here the broader GBD family for the logarithm, noting that more choices of Bregman distances may result in more efficient methodologies. 

Bregman distances also have a prominent role in the solution of important variational  problems. An example of this is its use for solving the equilibrium problem in Banach spaces, see, e.g., \cite{djafari2020proximal} as well as \cite{BurKas12}. This is one reason we choose to work in the more general Banach space setting.

More recently, \cite{BDL19} has provided a framework of sufficient conditions for coercivity and supercoercivity of the left and right GBDs. It has also shown how such properties are useful for establishing coercivity of the sum of the distance together with a function in $\Gamma_0(X)$. 

Hence, it is natural to use these new coercivity properties for carrying out a detailed analysis of the envelopes and proximity operators that are obtained when the GBD replaces the Bregman distance in \eqref{e:bba} and \eqref{e:fba}. In particular, ours can be seen as a unifying analysis which includes Moreau envelopes as a particular case. The goal of the present work is to furnish this analysis. We characterize the domains of the envelopes. We then show under what conditions the GBD envelopes possess the same advantageous properties that we have when specializing to Bregman case, and under what conditions those properties may be lost. We will illustrate with the same GBDs used in \cite{BDL19}. We show, in particular, that the GBDs that arise from using the Fitzpatrick representative generically share the same desirable properties as their Bregman-generalizing counterparts. Such results will be important if some Fitzpatrick cases possess computational advantages over their Bregman counterparts or admit envelopes that are useful for examining existing algorithms through their dual characterizations.

\subsection*{Outline and contributions}

In Section~\ref{sec:preliminaries}, we recall the generalized Bregman distances, along with some of their basic properties. We also recall the coercivity framework as well as the computed distances recently established in \cite{BDL19}. Moreover, we explain why these distances are important, since we use them to build the envelopes and proximity operators in our examples. 

In Section~\ref{sec:envelopes}, we introduce the left and right GBD envelopes and their associated proximity operators. We characterize the domains of the envelopes, and we provide sufficient conditions to guarantee the attainment of minimizers so that the proximity operators have nonempty images. The sufficient conditions rely upon the framework for coercivity established in \cite{BDL19}. 

In Section~\ref{sec:asymptotic}, we provide asymptotic results for the parameter $\gamma$. We then show how the results in the setting of GBDs vary from those we obtain more easily when specializing to Bregman distances. We illustrate all examples with both left and right versions, along with images of the envelope nets for a selection of $\gamma$ values. For all examples, we include three prototypical cases: the case of the distance constructed from the Fenchel--Young representative (which, under certain conditions, coincides with the classical Bregman distance), as well as the distances constructed from the smallest and largest members of the representative function set.

We conclude in Section~\ref{sec:conclusion}, and we provide explicit forms for all of our computed examples and proximity operators in Appendix~\ref{sec:appendix}.

\section{Preliminaries}\label{sec:preliminaries}

Given a nonempty subset $C$ of $X$, we denote by $\iota_C\colon X\to \left]-\infty, +\infty\right]$ the \emph{indicator function} of $C$, i.e., $\iota_C(x) :=0$ when $x\in C$ and $\iota_C(x):=+\infty$ otherwise. We will denote by ${\rm int}(C)$ the interior of $C$ and by $\overline{C}$ the closure of $C$. 

Let $f\colon X\to \left]-\infty, +\infty\right]$. The \emph{domain} of $f$ is defined by $\dom f :=\{x\in X: f(x) <+\infty\}$, the \emph{lower level set} of $f$ at height $\xi\in \mathbb{R}$ by $\lev{\xi} f :=\{x\in X: f(x)\leq \xi\}$, and the \emph{epigraph} of $f$ by $\epi f :=\{(x,\rho)\in X\times \mathbb{R}: f(x)\leq \rho\}$. We say that $f$ is \emph{proper} if $\dom f\neq \varnothing$ and \emph{lower semicontinuous (lsc)} at $\bar{x}$ if $f(\bar{x})\leq \liminf_{x\to \bar{x}} f(x)$. Unless specifically mentioned, these concepts are with respect to the strong (norm) topology. The function $f$ is said to be \emph{convex} if  
\begin{equation}
\forall x,y\in X,\ \forall \lambda\in [0,1],\quad 
f((1-\lambda)x+\lambda y)\leq (1-\lambda)f(x) +\lambda f(y);
\end{equation}
\emph{coercive} if $\lim_{\|x\|\to +\infty} f(x) =+\infty$; and \emph{supercoercive} if $\lim_{\|x\|\to +\infty} f(x)/\|x\| =+\infty$. 

For a proper function $f\colon X\to \left]-\infty, +\infty\right]$, its \emph{$\varepsilon$-subdifferential} ($\varepsilon\in \RP$) is the point-to-set mapping $\partial_\varepsilon f\colon X\rightrightarrows X^{\ast }$ given by 
\begin{equation}\label{e:partialf}
\partial_\varepsilon f(x) :=\{v\in X^\ast: \forall y\in X,\; \langle y-x,v \rangle \leq f(y) -f(x) +\varepsilon\},
\end{equation}
its \emph{subdifferential} is $\partial f :=\partial_0 f$, and its \emph{Fenchel conjugate} is the function $f^\ast\colon X^\ast\to \left]-\infty, +\infty\right]$ given by
\begin{equation}
f^\ast(v) :=\sup_{x\in X}\{\langle x,v\rangle -f(x)\}.
\end{equation} 
From the definition, we directly obtain the \emph{Fenchel--Young inequality}
\begin{equation}
\forall (x,v)\in X\times X^\ast,\quad f(x) +f^\ast(v)\geq \langle x,v \rangle
\end{equation}
and for $\varepsilon\in \RP$, we also have the following well-known characterization of $\partial_\varepsilon f$: 
\begin{equation}\label{FY}
f(x) +f^\ast(v) \leq \langle x,v \rangle +\varepsilon \iff v\in \partial_\varepsilon f(x).
\end{equation}

Given a point-to-set operator $T\colon X\rightrightarrows X^{\ast}$, its \emph{domain} is $\dom T :=\{x\in X: Tx\neq \varnothing\}$, its \emph{range} is $\ran T :=T(X)$, and its \emph{graph} is $\mathcal{G}(T) :=\{(x,x^{\ast})\in X\times
X^{\ast}: x^{\ast}\in Tx\}$. We say that $T$ is \emph{maximally monotone} if 
\begin{equation}\label{d:maximallymonotone}
(x,u) \in \mathcal{G}(T) \quad \iff \quad \forall (y,v)\in \mathcal{G}(T),\quad \langle x-y,u-v \rangle \geq 0.
\end{equation}
More properties and facts on maximally monotone operators can be found in \cite{BC17,ReginaBook2008}.

\subsection{Representative functions}

Let $S\colon X\rightrightarrows X^\ast$ be a maximally monotone operator. Following \cite[Definition~2.3]{BM-L18}, we say that a function $h\colon X\times X^\ast\to \left]-\infty, +\infty\right]$ \emph{represents $S$} if it satisfies the following conditions:
\begin{enumerate}[label=(\alph*)]
\item
$h$ is convex and norm $\times $ weak$^{\ast }$ lower semicontinuous in $X\times X^\ast$ (the weak$^{\ast }$ topology in $X^\ast$ is the smallest topology that makes continuous the linear functionals induced by $x\in X$).
\item
$\forall (x,v)\in X\times X^\ast,\ h(x,v)\geq \langle x,v\rangle$.
\item\label{h:c}
$h(x,v)=\langle x,v\rangle \Longleftrightarrow (x,v)\in \mathcal{G}(S)$.
\end{enumerate}
In this situation, we denote $h\in \mathcal{H}(S)$ and call $\mathcal{H}(S)$ the \emph{Fitzpatrick family of $S$}. We will make use, in particular, of three prototypical members of $\mathcal{H}(S)$. These are as follows.
\begin{enumerate}
\item 
The \emph{Fitzpatrick function of $S$}, denoted as $F_S$, is defined by
\begin{equation}
F_{S}(x,y) :=\sup_{(z,w) \in \mathcal{G}(S)} \left( \langle z-x,y-w \rangle +\langle x,y\rangle \right).    
\end{equation}
It is well known that $F_S$ is the smallest member of $\mathcal{H}(S)$ in the sense that $F_S\leq h$ for any $h\in \mathcal{H}(S)$, see \cite{Fitzpatrick 1988};
\item 
We denote as $\sigma_{S}$ the largest member of $\mathcal{H}(S)$ in the sense that $\sigma_S\ge h$ for any $h\in \mathcal{H}(S)$;
\item 
In the case when $T=\partial f$ for $f\in \Gamma_0(X)$, we also consider 
the \emph{Fenchel--Young representative}, denoted as $f\oplus f^\ast \in \mathcal{H}(\partial f)$, where $f\oplus f^\ast\colon X\times X^\ast\to \left]-\infty, +\infty\right]$ is given by
\begin{equation}
\forall (x,v)\in X\times X^\ast,\quad (f\oplus f^\ast)(x,v) :=f(x) +f^\ast(v).
\end{equation}
\end{enumerate}

\subsection{A distance between point-to-set operators}\label{ss:generalizesbregman}

From now on, we assume that $S\colon X\rightrightarrows X^\ast$ is a maximally monotone operator, $h\in \mathcal{H}(S)$, and $T\colon X\rightrightarrows X^\ast$ any point-to-set operator. As in \cite[Definition~3.1]{BM-L18}, for each $(x,y)\in X\times X$, we define 
\begin{subequations}\label{D}
\begin{align}
\mathcal{D}_T^{\flat,h}(x,y) &:=\begin{cases}
\inf\limits_{v\in Ty} \left( h(x,v)-\langle
x,v\rangle \right) & \text{if~} (x,y) \in \dom S\times \dom T, \\
+\infty & \text{otherwise}
\end{cases} \label{Dsharp}\\ 
\text{and}\quad \mathcal{D}_T^{\sharp,h}(x,y) &:=\begin{cases} 
\sup\limits_{v\in Ty} \left( h(x,v)-\langle
x,v\rangle \right) & \text{if~} (x,y) \in \dom S\times \dom T, \\
+\infty & \text{otherwise.}
\end{cases}\label{Dflat}
\end{align}
\end{subequations}
When $T$ is point to point, we simply write $\mathcal{D}_T^h :=\mathcal{D}_T^{\flat,h} =\mathcal{D}_T^{\sharp,h}.$ In some situations, we will refer to both distances \eqref{Dsharp} and \eqref{Dflat} simultaneously by the symbol $\mathcal{D}_{T}^{\star,h}$.

If a distance is of form \eqref{Dsharp} or \eqref{Dflat}, we call it a \emph{generalized Bregman distance} or \emph{GBD} for short. We mentioned before that the GBDs specialize to the Bregman distance under certain circumstances, which we now make precise. To a proper and convex function $f\colon X\to \left]-\infty, +\infty\right]$, we associate two Bregman distances (see \cite{Kiw97}) defined by
\begin{subequations}\label{d:Bregman}
\begin{align}
\mathcal{D}_f^\flat(x,y) :=& f(x)-f(y)+\inf_{v \in \partial f(y)} \langle y-x,v \rangle \label{d:Bregman_flat}\\
\text{and}\quad \mathcal{D}_f^\sharp(x,y) :=& f(x)-f(y)+\sup_{v \in \partial f(y)} \langle y-x,v \rangle.\label{d:Bregman_sharp}
\end{align}
\end{subequations}  
It is known that the GBDs specialize to the Bregman distances in the case where the Fenchel--Young representative is used \cite[Proposition~3.5]{BM-L18}, under mild domain conditions illuminated in \cite[Proposition~2.2]{BDL19}. We recall this result in the following proposition. 

\begin{proposition}[GBDs specialize to Bregman distances]
\label{p:fitz_bregman}
Let $f\in \Gamma_0(X)$. Then, for all $(x,y)\notin (\dom f\setminus \dom\partial f)\times \dom\partial f$,
\begin{equation}
\mathcal{D}_{\partial f}^{\flat, f \oplus f^{\ast}}(x,y) =\mathcal{D}_f^{\flat}(x,y) \quad\text{and}\quad
\mathcal{D}_{\partial f}^{\sharp, f \oplus f^{\ast}}(x,y) =\mathcal{D}_f^{\sharp}(x,y).
\end{equation}
\end{proposition}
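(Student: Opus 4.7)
The plan is to reduce the identity to a one-line Fenchel--Young computation, once the domain bookkeeping is cleared away. For $h=f\oplus f^*$, the integrand appearing in \eqref{Dsharp}--\eqref{Dflat} is
\begin{equation*}
h(x,v)-\langle x,v\rangle = f(x)+f^*(v)-\langle x,v\rangle.
\end{equation*}
The crucial observation is that for $v\in\partial f(y)$ the Fenchel--Young inequality holds with equality, so by \eqref{FY} applied with $\varepsilon=0$ we have $f^*(v)=\langle y,v\rangle-f(y)$. Substituting, the integrand simplifies to $f(x)-f(y)+\langle y-x,v\rangle$ for every $v\in\partial f(y)$, and taking $\inf$ (respectively, $\sup$) over $v\in\partial f(y)$ reproduces the right-hand sides of \eqref{d:Bregman_flat} and \eqref{d:Bregman_sharp}. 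This settles the main case $x,y\in\dom\partial f$.

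To finish, I would reconcile the two $+\infty$ conventions on the remaining configurations covered by the hypothesis. If $y\notin\dom\partial f$, then $\partial f(y)=\varnothing$, so the GBD is $+\infty$ by the ``$\dom S\times\dom T$'' clause in \eqref{D}, while both Bregman distances in \eqref{d:Bregman} also equal $+\infty$ under the standard convention on infima/suprema over the empty set. If $x\notin\dom f$, then $x\notin\dom\partial f$ and $f(x)=+\infty$, so again both sides are $+\infty$. The configuration explicitly removed by hypothesis---$x\in\dom f\setminus\dom\partial f$ together with $y\in\dom\partial f$---is precisely the one on which the GBD definition forces $+\infty$ (because $x\notin\dom S$) while the Bregman formula can still return a finite value, which both explains and justifies the exclusion.

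The main obstacle is thus not computational but notational: tracking how the ``$+\infty$ outside $\dom S\times\dom T$'' clause in \eqref{D} matches the implicit $+\infty$ behaviour of \eqref{d:Bregman} off $\dom\partial f\times\dom\partial f$, and verifying that the excluded rectangle is the minimal one on which the two definitions could disagree. The analytic content is a single invocation of the Fenchel--Young equality characterization of $\partial f$.
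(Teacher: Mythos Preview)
The paper does not prove this proposition: it is stated with a citation to \cite[Proposition~3.5]{BM-L18} and \cite[Proposition~2.2]{BDL19}, so there is no in-paper argument to compare against. Your core computation is exactly the standard one those references use: for $v\in\partial f(y)$ the Fenchel--Young equality $f^*(v)=\langle y,v\rangle-f(y)$ collapses the integrand $f(x)+f^*(v)-\langle x,v\rangle$ to $f(x)-f(y)+\langle y-x,v\rangle$, and taking inf/sup over $\partial f(y)$ recovers \eqref{d:Bregman_flat}--\eqref{d:Bregman_sharp} on $\dom\partial f\times\dom\partial f$.

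There is, however, a genuine slip in your bookkeeping for the case $y\notin\dom\partial f$. You write that ``both Bregman distances in \eqref{d:Bregman} also equal $+\infty$ under the standard convention on infima/suprema over the empty set.'' This is correct for $\mathcal D_f^{\flat}$ (since $\inf_{\varnothing}=+\infty$), but for $\mathcal D_f^{\sharp}$ the convention gives $\sup_{\varnothing}=-\infty$, so the raw formula \eqref{d:Bregman_sharp} would return $f(x)-f(y)-\infty$, which is $-\infty$ whenever $x,y\in\dom f$---not $+\infty$. A similar indeterminacy $(-\infty)+(+\infty)$ arises in \eqref{d:Bregman_flat} when $y\notin\dom f$. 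The identity therefore rests on the implicit convention (inherited from \eqref{e:Df} and the Kiwiel definition cited at \eqref{d:Bregman}) that $\mathcal D_f^{\star}(x,y):=+\infty$ whenever $\partial f(y)=\varnothing$, rather than on the empty-set inf/sup rule you invoke. Once that convention is made explicit, your argument is complete; without it, the $\sharp$-case of the claimed equality fails as written.
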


\begin{remark}
By Proposition~\ref{p:fitz_bregman}, we see that, in the case when $\dom f \setminus \dom \partial f = \varnothing$, the two kinds of distances are everywhere equal. However, if $f$ is the Boltzmann--Shannon (see \eqref{def:entropy}), then $\dom f \setminus \dom \partial f = \{0\}$, and the two kinds of distances fail to be equal on the set $\{(0,y) | \; y>0 \}$ (see \cite{BDL19} for more details).
\end{remark}

As a motivation for considering these new distances, we provide below connections between our generalized distances and solutions of variational problems. First, we consider the problem of finding zeros of a sum of operators, and second, the problem of minimizing a DC (difference of convex) function. In both cases, we need $S$ different than $T$, with an operator $T$ which may not be monotone.

\begin{remark}\label{ES} 
In the next proposition, we use the Eberlein--\u Smulian theorem, which states that a subset of a Banach space is weakly compact if and only if it is weakly sequentially compact (see \cite[Chapter~III, page 18]{Diestel}). We also use the fact that, if $X$ is a reflexive Banach space and a map $V:X\rightrightarrows X^*$ is locally bounded at a point in the interior of its domain, then there is a neighbourhood of that reference point which is norm-closed and bounded, and hence weakly compact (by Bourbaki--Alaoglu's theorem and reflexivity). We then use the Eberlein--\u Smulian theorem to deduce that the given neighbourhood is in fact weakly sequentially compact. 
\end{remark}

Given $h\in \mathcal{H}(S)$, the \emph{enlargement} $S^h_\varepsilon$ of $S$ is defined by
\begin{equation}\label{eq:SE}
S^h_\varepsilon x :=\{v\in X^\ast: h(x,v)-(x,v)\leq \varepsilon\}.
\end{equation} 
More details on $S^h_\varepsilon$ can be found, e.g., in \cite{ReginaBook2008,bs2002}. 
We will show that the generalized distances can be used to define approximate solutions of problem  
\begin{equation}\label{eq:SP}
\text{find $x\in X$ such that~~} 0\in Sx + Tx.
\end{equation}
The proof of the next result follows closely the one in \cite[Proposition 3.7]{BM-L18}, but we include it here for the convenience of the reader. 

\begin{proposition}
\label{p:sum}
Let $X$ be a reflexive Banach space. Suppose that $S\colon X\rightrightarrows X^\ast$ is a maximally monotone operator and $T\colon X\rightrightarrows X^\ast$ is a point-to-set operator. Fix any $h\in \mathcal{H}(S)$, $\varepsilon\in \RP$, and $x\in X$. Consider the following statements.
\begin{enumerate}[label =(\alph*)]
\item\label{p:sum_approx} 
$0\in S^h_\varepsilon x + Tx$.
\item\label{p:sum_dist} 
$\mathcal{D}_{-T}^{\flat,h}(x,x)\leq \varepsilon$.
\end{enumerate}
Then \ref{p:sum_approx} $\implies$ \ref{p:sum_dist}. Moreover, if $\dom T$ is open and $T$ is locally bounded with weakly closed images, then the two statements are equivalent. 
\end{proposition}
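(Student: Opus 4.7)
The plan is to unpack both statements directly from the definitions, observe that the forward direction is immediate, and establish the converse by obtaining weak compactness of $Tx$ and attaining the infimum in $\mathcal{D}_{-T}^{\flat,h}(x,x)$ with the help of the lower semicontinuity properties of the representative function $h$.

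First I would rewrite \ref{p:sum_approx} concretely: it says there exists $v\in Tx$ with $-v\in S^h_\varepsilon x$, i.e., $h(x,-v)+\langle x,v\rangle\leq \varepsilon$. Similarly, I would rewrite \ref{p:sum_dist} by the change of variables $w=-v$ as
\begin{equation*}
\mathcal{D}_{-T}^{\flat,h}(x,x) = \inf_{v\in Tx}\bigl(h(x,-v)+\langle x,v\rangle\bigr)\leq \varepsilon.
\end{equation*}
In particular both statements implicitly require $x\in \dom S\cap \dom T$. The implication \ref{p:sum_approx}~$\Rightarrow$~\ref{p:sum_dist} then follows from the explicit $v$ in \ref{p:sum_approx}, since that single $v$ bounds the infimum from above.

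For the converse under the additional hypotheses, my plan is to show that the infimum in the rewritten form of \ref{p:sum_dist} is attained. Since $x\in \dom T$ and $\dom T$ is open, $x$ lies in the interior of $\dom T$, so by local boundedness and Remark~\ref{ES}, there is a neighbourhood $U$ of $x$ for which $T(U)$ is norm bounded. In particular $Tx$ is norm bounded; combined with weak closedness of $Tx$ and the reflexivity of $X$ (so that bounded norm-closed sets in $X^{\ast}$ are weak$^\ast$ compact, and since $X$ is reflexive the weak and weak$^\ast$ topologies on $X^\ast$ coincide), we obtain that $Tx$ is weakly (equivalently, weak$^\ast$) compact.

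The final step is to attain the infimum. The map
\begin{equation*}
\varphi\colon X^\ast \to \left]-\infty,+\infty\right]\colon v \mapsto h(x,-v)+\langle x,v\rangle
\end{equation*}
is weak$^\ast$ lower semicontinuous: $v\mapsto -v$ is weak$^\ast$ continuous, $h(x,\cdot)$ is weak$^\ast$ lsc by the definition of representative function, and $\langle x,\cdot\rangle$ is weak$^\ast$ continuous. Taking a minimizing sequence $(v_n)\subseteq Tx$, boundedness plus the Eberlein--\v Smulian part of Remark~\ref{ES} yields a subsequence $(v_{n_k})$ converging weakly to some $v^\ast\in X^\ast$; weak closedness of $Tx$ gives $v^\ast\in Tx$, and weak$^\ast$ lsc of $\varphi$ yields $\varphi(v^\ast)\leq \liminf_k \varphi(v_{n_k}) = \mathcal{D}_{-T}^{\flat,h}(x,x) \leq \varepsilon$. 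Hence $-v^\ast \in S^h_\varepsilon x$ and $0 = v^\ast+(-v^\ast)\in Tx + S^h_\varepsilon x$. The main obstacle is the attainment step: without local boundedness and weak closedness, the infimum in $\mathcal{D}_{-T}^{\flat,h}$ need not be attained, which is precisely why the added hypotheses enter exactly there.
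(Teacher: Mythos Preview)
Your proposal is correct and follows essentially the same approach as the paper: both directions are unpacked from the definitions, the forward implication is immediate from the witnessing element, and the converse is obtained by showing the infimum defining $\mathcal{D}_{-T}^{\flat,h}(x,x)$ is attained via weak (sequential) compactness of $Tx$. If anything, your argument is slightly more careful than the paper's, which invokes ``weak continuity'' of $h(x,\cdot)$ where only weak$^\ast$ lower semicontinuity is available (and is all that is needed).
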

\begin{proof}
\ref{p:sum_approx} $\implies$ \ref{p:sum_dist}: Note that \ref{p:sum_approx} implies the existence of $w\in (-Tx)\cap S^h_\varepsilon x$. By definition of $\mathcal{D}_{-T}^{\flat,h}$, we have
\begin{equation}
\mathcal{D}_{-T}^{\flat,h}(x,x) =\inf_{v\in -Tx} h(x,v)-\langle x,v \rangle \leq h(x,w)-\langle x,w \rangle \leq \varepsilon,    
\end{equation}
where we used the fact that $w\in S^h_\varepsilon x$ (see \eqref{eq:SE}) in the rightmost inequality. 

\ref{p:sum_dist} $\implies$ \ref{p:sum_approx}: It follows from \ref{p:sum_dist} that $\mathcal{D}_{-T}^{\flat,h}(x,x) <+\infty$, and so $x\in \dom (-T) =\dom T$. Now, assume that $\dom T$ is open and $T$ is locally bounded with weakly closed images. Then, the same properties hold for $-T$ too. Consequently, the set $-Tx$ is contained in a neighbourhood which is norm-closed and bounded. Using now Remark \ref{ES}, we have that the set $-Tx$ is contained in a neighbourhood which is weakly sequentially compact. Using also the assumption that $h(x,\cdot)$ is weakly continuous, we conclude that the infimum in the expression for $\mathcal{D}_{-T}^{\flat,h}(x,x)$ must be attained at some point in $-Tx$. Namely, there exists $v\in -Tx$ such that
\begin{equation}
h(x,v)-\langle x,v \rangle =\mathcal{D}_{-T}^{\flat,h}(x,x) \leq \varepsilon,   
\end{equation}
which implies that $v\in S^h_\varepsilon x$. Since $-v\in Tx$, we get \ref{p:sum_approx}.
\end{proof}

Note that, if $x$ satisfies condition \ref{p:sum_approx} in Proposition~\ref{p:sum}, it can be seen as an approximate solution of \eqref{eq:SP}. Indeed, by taking $\varepsilon=0$, condition \ref{p:sum_approx} becomes \eqref{eq:SP}. In the latter case (i.e., when $\varepsilon=0$), condition \ref{p:sum_dist} in Proposition~\ref{p:sum} can be seen as a necessary optimality condition for problem \eqref{eq:SP}. The condition becomes sufficient when $T$ verifies additional hypotheses. 

We show next that an optimality condition for minimizing a DC function can be expressed by means of the sharp distance. In the proposition below, the equivalence between statements \ref{p:DC_min} and \ref{p:DC_oc} is well known in finite dimensional spaces (see, e.g. \cite[Theorem~3.1]{HU95}). The analogous result in Banach spaces is hard to track down, so we decided to include its proof here.

\begin{proposition}
\label{p:DC}
Let $f\colon X\to \left]-\infty, +\infty\right]$ and $g\colon X\to \RR$ be proper lower semicontinuous convex functions. Then, the following statements are equivalent
\begin{enumerate}[label =(\alph*)]
\item\label{p:DC_min}
$x$ is a global minimum of $f-g$ on $X$.
\item\label{p:DC_oc} 
For all $\varepsilon\in \RP$, $\partial_{\varepsilon}g(x)\subseteq \partial_{\varepsilon}f(x)$.
\item\label{p:DC_dist}
For all $\varepsilon\in \RP$, $\mathcal{D}_{\partial_{\varepsilon}g}^{\sharp,f\oplus f^*}(x,x)\leq \varepsilon$.
\end{enumerate}
\end{proposition}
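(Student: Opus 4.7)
The plan is to prove (b) $\Leftrightarrow$ (c) directly from the definition of the sharp GBD in \eqref{D}, and then to prove the classical DC equivalence (a) $\Leftrightarrow$ (b). For the latter, (a) $\Rightarrow$ (b) is immediate from the definitions, while the converse requires a contrapositive argument that exploits the assumption that $g$ is real-valued on the Banach space $X$.

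For (b) $\Leftrightarrow$ (c), I would unfold
\begin{equation*}
\mathcal{D}_{\partial_\varepsilon g}^{\sharp,f\oplus f^\ast}(x,x)
= \sup_{v\in \partial_\varepsilon g(x)} \bigl(f(x) + f^\ast(v) - \langle x,v\rangle\bigr),
\end{equation*}
whose summand is $\leq \varepsilon$ precisely when $v\in \partial_\varepsilon f(x)$ by the Fenchel--Young characterization \eqref{FY}. Hence (c) is equivalent to $\partial_\varepsilon g(x)\subseteq \partial_\varepsilon f(x)$, which is (b). Two domain issues must be dispatched: first, since $g\colon X\to \RR$ is real-valued lsc convex on a Banach space, it is continuous, so $\partial g(x)\neq \varnothing$ and $x\in \dom\partial_\varepsilon g$ for every $\varepsilon\geq 0$; second, (b) taken with $\varepsilon=0$ gives $\varnothing\neq \partial g(x)\subseteq \partial f(x)$, hence $x\in \dom\partial f$. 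So neither direction is obstructed by the $+\infty$ convention in \eqref{D}.

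The implication (a) $\Rightarrow$ (b) is a one-line addition: given $v\in \partial_\varepsilon g(x)$ and $y\in X$, the $\varepsilon$-subgradient inequality gives $g(y)-g(x)\geq \langle y-x,v\rangle -\varepsilon$, and global minimality gives $f(y)-f(x)\geq g(y)-g(x)$; summing yields $v\in \partial_\varepsilon f(x)$.

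The main obstacle is (b) $\Rightarrow$ (a), which I plan to handle by contraposition. Suppose $x$ is not a global minimum; choose $y\in X$ with $f(y)-g(y)<f(x)-g(x)$. Since $g$ is real-valued lsc convex on $X$, it is continuous at $y$, so $\partial g(y)\neq \varnothing$; pick $v\in \partial g(y)$. Define $\varepsilon := g(x)-g(y)-\langle x-y,v\rangle$; the subgradient inequality for $g$ at $y$ forces $\varepsilon\geq 0$, and a short calculation shows $v\in \partial_\varepsilon g(x)$. Assuming for contradiction that $v\in \partial_\varepsilon f(x)$ and evaluating the defining inequality at $y$ yields $f(y) \geq f(x) - g(x) + g(y)$, contradicting the choice of $y$. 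Thus $v\in \partial_\varepsilon g(x)\setminus \partial_\varepsilon f(x)$, so (b) fails. The critical hypothesis enabling this step is that $g$ is finite on all of the Banach space $X$, without which $\partial g(y)$ could be empty and the argument would collapse.
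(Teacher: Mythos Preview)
Your proposal is correct and follows essentially the same architecture as the paper: the equivalence (b)$\Leftrightarrow$(c) via the Fenchel--Young characterization \eqref{FY}, the implication (a)$\Rightarrow$(b) by adding the minimality inequality to the $\varepsilon$-subgradient inequality, and (b)$\Rightarrow$(a) by contraposition---pick a subgradient of $g$ at the better point $y$, transport it to an $\varepsilon$-subgradient of $g$ at $x$, and show it cannot lie in $\partial_\varepsilon f(x)$.

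The one substantive variation is in the contrapositive step: the paper chooses $u\in\partial_{\varepsilon_0}g(\overline{x})$ for a small $\varepsilon_0>0$, invoking \cite[Theorem~2.4.4(iii)]{Zal02} for nonemptiness (which would work even without $\dom g=X$), whereas you take an exact subgradient $v\in\partial g(y)$, justified by the continuity of a real-valued lsc convex function on a Banach space. Both are valid under the stated hypotheses; your route is slightly more elementary here, while the paper's would survive a weakening of the assumption on $g$. You are also more explicit than the paper about the $+\infty$ domain convention in \eqref{D} when establishing (b)$\Leftrightarrow$(c), which is a welcome clarification.
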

\begin{proof}  
\ref{p:DC_min} $\implies$ \ref{p:DC_oc}: Assume that $x$ is a global minimum of $f-g$ on $X$. Then, for all $y\in X$, $f(x) -g(x)\leq f(y) -g(y)$, and so $g(y) -g(x)\leq f(y) -f(x)$. Now, let any $\varepsilon\in \RP$ and any $v\in \partial_\varepsilon g(x)$. We have, for all $y\in X$, 
\begin{equation}
\langle y-x,v \rangle +\varepsilon\leq g(y) -g(x)\leq f(y) -f(x),    
\end{equation}
which implies that $v\in \partial_\varepsilon f(x)$. Therefore, $\partial_\varepsilon g(x)\subseteq \partial_\varepsilon f(x)$.

\ref{p:DC_oc} $\implies$ \ref{p:DC_min}: Assume that $x$ is not a global minimum of $f-g$ on $X$. That is, there exists $\overline{x}\in X$ such that $f(\overline{x}) -g(\overline{x}) <f(x) -g(x)$. Then, we can pick $0 <\varepsilon_0 < [g(\overline{x}) -g(x)] -[f(\overline{x}) -f(x)]$ and $u\in \partial_{\varepsilon_0} g(\overline{x})$ (see \cite[Theorem~2.4.4(iii)]{Zal02}). By definition, for all $y\in X$, 
\begin{equation}
\langle y-\overline{x},u \rangle\leq g(y) -g(\overline{x}) +\varepsilon_0,
\end{equation}
which can be written as
\begin{equation}
\langle y-x,u \rangle \leq g(y) -g(x) +\varepsilon, 
\end{equation}
where $\varepsilon :=g(x) -g(\overline{x}) +\varepsilon_0 -\langle x-\overline{x},u \rangle\geq 0$. Since $y\in X$ is arbitrary, the inequality above yields $u\in \partial_\varepsilon g(x)$. On the other hand, 
\begin{equation}
\langle \overline{x}-x,u \rangle =g(\overline{x}) -g(x) -\varepsilon_0 +\varepsilon >f(\overline{x}) -f(x) +\varepsilon,    
\end{equation}
which implies that $u\notin \partial_\varepsilon f(x)$. Hence, \ref{p:DC_oc} does not hold. We deduce that if \ref{p:DC_oc} holds, then so does \ref{p:DC_min}. 

We have shown the equivalence between \ref{p:DC_min} and \ref{p:DC_oc}. To see the one between \ref{p:DC_oc} and \ref{p:DC_dist}, we note that 
\begin{equation}
\mathcal{D}_{\partial_{\varepsilon}g}^{\sharp,f\oplus f^*}(x,x) =\sup_{v\in \partial_{\varepsilon}g(x)} \left( (f\oplus f^\ast)(x,v) -\langle x,v \rangle \right) =\sup_{v\in \partial_{\varepsilon}g(x)} \left( f(x) +f^\ast(v) -\langle x,v \rangle \right).    
\end{equation}
Fix any $\varepsilon\in \RP$, the above expression and characterization \eqref{FY} of $\partial_{\varepsilon}f$ imply that 
\begin{subequations}
\begin{align}
\mathcal{D}_{\partial_{\varepsilon}g}^{\sharp,f\oplus f^*}(x,x)\leq \varepsilon &\iff \forall v\in \partial_{\varepsilon}g(x),\quad f(x) +f^\ast(v) -\langle x,v \rangle \leq \varepsilon \\
&\iff \forall v\in \partial_{\varepsilon}g(x),\quad v\in \partial_{\varepsilon}f(x) \\
&\iff \partial_{\varepsilon}g(x)\subseteq \partial_{\varepsilon}f(x),
\end{align}
\end{subequations}
which completes the proof.
\end{proof}

We will make use of the following lemma to simplify our analysis of the asymptotic behaviour in Section~\ref{sec:asymptotic}. 

\begin{lemma}
\label{l:domD}
Let $f\in \Gamma_0(X)$ and suppose that $h\in \mathcal{H}(\partial f)$ satisfies
\begin{equation}\label{eqn:dom}
\forall (x,v)\in \dom f\times \ran\partial f,\quad h(x,v)\leq (f\oplus f^\ast)(x,v) =f(x) +f^\ast(v).
\end{equation}
Then the following hold:
\begin{enumerate}
\item\label{l:domD_flat} 
$\dom \mathcal{D}_{\partial f}^{\flat,h} =\dom\partial f\times \dom\partial f$.
\item\label{l:domD_sharp} 
$\dom\partial f\times \inte(\dom\partial f)\subset	\dom \mathcal{D}_{\partial f}^{\sharp, h} \subseteq \dom\partial f\times \dom\partial f.$ Consequently, when $\dom\partial f$ is open, then $\dom \mathcal{D}_{\partial f}^{\sharp, h} = \dom\partial f\times \dom\partial f$.
\end{enumerate}
\end{lemma}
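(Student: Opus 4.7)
The plan is to verify both statements by treating each of the two inclusions separately, using hypothesis \eqref{eqn:dom} together with the Fenchel--Young equality that characterizes the subdifferential ($v\in \partial f(y)$ if and only if $f(y)+f^*(v)=\langle y,v\rangle$).

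For part \ref{l:domD_flat}, the inclusion $\dom \mathcal{D}_{\partial f}^{\flat,h} \subseteq \dom\partial f \times \dom\partial f$ is immediate from the definition \eqref{Dsharp}, which already assigns the value $+\infty$ outside $\dom\partial f \times \dom\partial f$. For the reverse inclusion, I would fix $(x,y) \in \dom\partial f \times \dom\partial f$ and choose any $v\in \partial f(y)\subseteq \ran\partial f$. The Fenchel--Young equality forces $f^*(v)=\langle y,v\rangle - f(y)<+\infty$, and $x\in \dom\partial f \subseteq \dom f$ gives $f(x)<+\infty$. Hypothesis \eqref{eqn:dom} then yields $h(x,v)\leq f(x)+f^*(v)<+\infty$, so
\begin{equation*}
\mathcal{D}_{\partial f}^{\flat,h}(x,y) \;\leq\; h(x,v)-\langle x,v\rangle \;<\; +\infty,
\end{equation*}
proving $(x,y)\in \dom \mathcal{D}_{\partial f}^{\flat,h}$.

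For part \ref{l:domD_sharp}, the upper inclusion is again immediate from \eqref{Dflat}. For the lower inclusion, take $(x,y)\in \dom\partial f \times \inte(\dom\partial f)$. Since $\dom\partial f\subseteq \dom f$, monotonicity of the interior operator gives $y\in \inte(\dom\partial f)\subseteq \inte(\dom f)$. Because $f\in \Gamma_0(X)$ is continuous on $\inte(\dom f)$ (a classical fact on Banach spaces), $\partial f(y)$ is nonempty and norm-bounded (in fact weak$^*$-compact). For every $v\in \partial f(y)$, Fenchel--Young equality gives $f^*(v)-\langle x,v\rangle = -f(y)+\langle y-x,v\rangle$, so \eqref{eqn:dom} implies
\begin{equation*}
h(x,v)-\langle x,v\rangle \;\leq\; f(x)-f(y)+\langle y-x,v\rangle.
\end{equation*}
Taking the supremum over $v\in \partial f(y)$, the right-hand side is finite by boundedness of $\partial f(y)$, hence $\mathcal{D}_{\partial f}^{\sharp,h}(x,y)<+\infty$. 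The closing equality in \ref{l:domD_sharp} when $\dom\partial f$ is open follows at once from $\inte(\dom\partial f)=\dom\partial f$, which collapses the two inclusions.

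The main (small) obstacle is the sharp case: unlike the flat case, where exhibiting one admissible $v\in \partial f(y)$ suffices, the sharp case requires $\partial f(y)$ to be bounded so that the sup of the affine functional $v\mapsto \langle y-x,v\rangle$ remains finite; this is exactly why the hypothesis must be strengthened to $y\in \inte(\dom\partial f)$ and why the continuity of $f$ on $\inte(\dom f)$ is invoked here but not in \ref{l:domD_flat}.
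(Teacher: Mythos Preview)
Your proof is correct and follows essentially the same approach as the paper. The only cosmetic differences are that the paper first routes through Proposition~\ref{p:fitz_bregman} to obtain the comparison $\mathcal{D}_{\partial f}^{\star,h}\leq \mathcal{D}_f^\star$ and then bounds the classical Bregman distance, whereas you bound $h(x,v)-\langle x,v\rangle$ directly via \eqref{eqn:dom} and Fenchel--Young; and for the boundedness of $\partial f(y)$ in \ref{l:domD_sharp} the paper invokes local boundedness of the maximally monotone operator $\partial f$ on $\inte(\dom\partial f)$, while you use continuity of $f$ on $\inte(\dom f)$---both standard and equivalent in effect here.
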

\begin{proof}
Let $(x,y)\in \dom\partial f\times \dom\partial f$. It follows from the assumption on $h$ and Proposition~\ref{p:fitz_bregman} that 
\begin{equation}\label{eq1}
\mathcal{D}_{\partial f}^{\star,h}(x,y)\leq \mathcal{D}_{\partial f}^{\star,f\oplus f^\ast}(x,y) =\mathcal{D}_f^\star(x,y),
\end{equation}
where $\star\in \{\flat,\sharp\}$.

\ref{l:domD_flat}: By definition, we have that $\dom \mathcal{D}_{\partial f}^{\flat, h}\subseteq \dom\partial f\times \dom\partial f$.  Hence, it is enough to prove the opposite inclusion. Indeed, fix any $v_0\in \partial f(y)$. By \eqref{eq1} for $\star =\flat$ and then by Cauchy--Schwarz inequality, we have that
\begin{subequations}
\begin{align}
\mathcal{D}_{\partial f}^{\flat,h}(x,y) &\leq \mathcal{D}_f^\flat(x,y) =f(x) -f(y) +\inf_{v \in \partial f(y)} \langle y-x,v \rangle \\
&\leq f(x) -f(y) +\|y-x\|\cdot \|v_0\| <+\infty,
\end{align}
\end{subequations}
Here, the final inequality follows from the fact that $x,y\in \dom\partial f\subseteq \dom f$. Thus $(x,y)\in \dom \mathcal{D}_{\partial f}^{\flat,f\oplus f^\ast}$, and we deduce that $\dom \mathcal{D}_{\partial f}^{\flat,h} =\dom\partial f\times \dom\partial f$.

\ref{l:domD_sharp}: As in part \ref{l:domD_flat}, we always have that $\dom \mathcal{D}_{\partial f}^{\sharp, h}\subseteq \dom\partial f\times \dom\partial f$. For the leftmost inclusion, assume that $(x,y)\in \dom\partial f\times \inte(\dom\partial f)$. Since $y\in \inte(\dom\partial f)$ and $\partial f$ is maximally monotone, the set $\partial f(y)$ is bounded, so there exists a constant $M(y)\geq 0$ such that $\|v\|\leq M(y)$ for every $v\in \partial f(y)$. Altogether, \eqref{eq1} for $\star =\sharp$ and Cauchy--Schwarz inequality give
\begin{subequations}
\begin{align}
\mathcal{D}_{\partial f}^{\sharp,h}(x,y) &\leq \mathcal{D}_f^\sharp(x,y) =f(x) -f(y) +\sup_{v \in \partial f(y)} \langle y-x,v \rangle \\
&\leq f(x) -f(y) +\|y-x\|\cdot M(y) <+\infty,
\end{align}
\end{subequations}
because $x,y$ are fixed. This proves both inclusions. The last statement in \ref{l:domD_sharp} follows directly from the first one.
\end{proof}

\begin{remark}
The rightmost inclusion in Lemma~\ref{l:domD}\ref{l:domD_sharp} might be strict, in contrast to part \ref{l:domD_flat}. Let $B :=\{(a,b)\in \mathbb{R}^2: a^2+b^2\leq 1\}$ the unit ball in two dimensions and consider $f :=\iota_{B}$ the indicator function of the unit ball. Take $y=(1,0)\in B$, and any $x=(x_1,x_2)\in B$ such that $x_1<1$ (i.e., any $B\ni x\neq y$). Then $\partial f(y)=\{(t,0): t\geq 0\}$ and $f(x)=f(y)=0$, thus
\begin{equation}
\mathcal{D}_{\partial f}^{\sharp,f\oplus f^\ast}(x,y)=\sup_{t\geq 0} \langle y-x,ty \rangle=  (1-x_1)\sup_{t\geq 0}t=+\infty.    
\end{equation}
Therefore, $(x,y)\not\in \dom \mathcal{D}_{\partial f}^{\sharp,f\oplus f^\ast}$ for all $x\neq y$.
\end{remark}

\subsection{Examples of important generalized Bregman distances}

For our examples in this section, $X =X^\ast =\mathbb{R}$ and, $T =S =\partial f$ is point-to-point on $\inte \dom f$ in which case we simply write $\mathcal{D}^{h}$ for a specific choice of $h\in \mathcal{H}(\partial f)$ (not to be confused with $\mathcal{D}_f$, the classical Bregman distance for $f$). In these cases, we will refer to the representative function used by its name. Specifically,
\begin{enumerate}
\item\label{d:DFitzpatrick} When $h=F_S$ is the Fitzpatrick function for a maximally monotone operator $S$, we will write $\mathcal{D}^{F_{S}}$;
\item When $h=\sigma_{S}$, the largest member of  $\mathcal{H}(S)$, we will write $\mathcal{D}^{\sigma_{S}}$;
\item\label{d:DFenchelYoung} When $h =f\oplus f^\ast \in \mathcal{H}(\partial f)$ for $f \in \Gamma_0(X)$ is the Fenchel--Young representative, we will denote this by $\mathcal{D}^{f\oplus f^\ast}$.
\end{enumerate} 

\begin{remark}[The lower closed distance]
\label{r:lsc}
The function $\mathcal{D}_T^{\star,h}(\cdot,y)$ may not be lower semicontinuous at $x \in \overline{\dom} \partial f \setminus \dom \partial f$. Additionally, for $y \notin \inte \dom(T)$, the distance may \emph{not} be lower semicontinuous with respect to the right variable either. For more details on the semicontinuity properties of the GBDs, see \cite[section 3]{BM-L18}. For these reasons, \cite{BDL19} introduced the \emph{lower closed GBD} $\overline{\mathcal{D}}^{\star,h}_T$ defined by
\begin{align}
\epi \overline{\mathcal{D}}^{\star,h}_T =\overline{\epi} \mathcal{D}^{\star,h}_T,
\end{align}
where, as before, $\star\in \{\flat,\sharp\}$. The lower closed distances $\overline{\mathcal{D}}^{F_S}$, $\overline{\mathcal{D}}^{\sigma_{S}}$, and $\overline{\mathcal{D}}^{f\oplus f^\ast}$ are defined analogously (with ${\mathcal{D}}^{F_S}, {\mathcal{D}}^{\sigma_{S}}$ and ${\mathcal{D}}^{f\oplus f^\ast}$ as in \ref{d:DFitzpatrick}--\ref{d:DFenchelYoung} above). For all our computed examples, in the cases when $\overline{\mathcal{D}}^{h}$ and $\mathcal{D}^{h}$ do not agree, we will compute with $\overline{\mathcal{D}}^{h}$ so as to have a lower semicontinuous distance.
\end{remark}

The authors in \cite{BDL17,BDL19} illustrated their findings with \emph{energy}, whose Bregman distance specializes to the Moreau case and the \emph{Boltzmann--Shannon entropy}, whose importance we will soon recall. Naturally, we will illustrate our results about envelopes and proximity operators using the GBDs associated with these operators that were computed in \cite{BDL19}. They are as follows.

\begin{example}[Energy]
\label{ex:dist:energy}
In the case where $f: x \mapsto \frac{1}{2}x^2$ is the \emph{energy}, we have $\partial f=\Id$. The distances constructed from the smallest and biggest members of $\mathcal{H}(\Id)$ are
\begin{equation}
\mathcal{D}^{F_{\Id}}(x,y) =\frac14(x-y)^2 \quad \text{and}\quad \mathcal{D}^{\sigma_{\Id}} =\iota_{\mathcal{G}(\Id)}. 
\end{equation}
\end{example}
Already from this example, we see that we should not expect all asymptotic properties of Bregman envelopes extend to GBD envelopes, because envelopes associated with the distance $\mathcal{D}^{\sigma_{\Id}}$ will always be vacuously equal to the function being regularized, while their associated proximity operators will vacuously be equal to $\Id$.

\begin{example}[Kullback--Leibler divergence and GBD $\overline{\mathcal{D}}^{\ent \oplus \ent^*}$]
\label{ex:KullbackLiebler}
The (negative) Boltzmann--Shannon entropy is defined as 
\begin{equation}\label{def:entropy}
\ent:\RR \rightarrow \left]-\infty, +\infty\right] \colon x\mapsto \begin{cases} x\log x -x & \text{if~}  x > 0, \\
0 & \text{if~}  x=0,\\
+\infty & \text{otherwise.}\end{cases} 
\end{equation}

The Boltzmann--Shannon entropy is particularly important and natural to consider, because its derivative is $\log$, its conjugate is $\ent^* = \exp$, and its associated Bregman distance is the Kullback--Leibler divergence,
\begin{equation}
\mathcal{D}_{\ent}:(x,y) \mapsto \begin{cases}x(\log(x)-\log(y))-x+y & \text{if~}  y >0,\\
y & \text{if~}  y >0\;\text{and}\; x= 0,\\
+\infty & \text{otherwise},\end{cases}    
\end{equation}
which is frequently used as a measure of distance between positive vectors in information theory, statistics, and portfolio selection. The GBD associated with the Fenchel--Young representative $\ent \oplus \ent^* \in \mathcal{H}(\log)$ is
\begin{equation}
\mathcal{D}^{\ent \oplus \ent^*}:(x,y) \mapsto \begin{cases}x(\log(x)-\log(y))-x+y & \text{if~}  x,y >0,\\
+\infty & \text{otherwise}.\end{cases}    
\end{equation}
Thus, it may be seen that the \emph{Bregman} distance of the \emph{Boltzmann--Shannon entropy} is the special case of the \emph{GBD} for the Fenchel--Young representative of the \emph{logarithm} function, except on the set $\dom f \setminus \dom \partial f \times \dom \partial f = \{0\}\times \left]0,+\infty \right[$ (see Proposition~\ref{p:fitz_bregman} and Remark~\ref{r:lsc}). Its lower closure is given by 
\begin{equation}\label{def:Bregman_lower_closure}
\overline{\mathcal{D}}^{\ent \oplus \ent^*}:(x,y) \mapsto \begin{cases}
x(\log(x)-\log(y))-x+y & \text{if~}  y >0,x\geq 0,\\
0 &\text{if~}  y=x=0,\\
+\infty & \text{otherwise}
\end{cases}
\end{equation}
and is shown in Figure~\ref{D:bregman}. Hence, this new distance allows us to extend the domain of the classical Bregman distance to the boundary points.
\end{example}

\begin{figure}
\begin{center}
\subfloat[{$\overline{\mathcal{D}}^{F_{\log}}$}\label{D:fitz}]{\includegraphics[width=.3\textwidth]{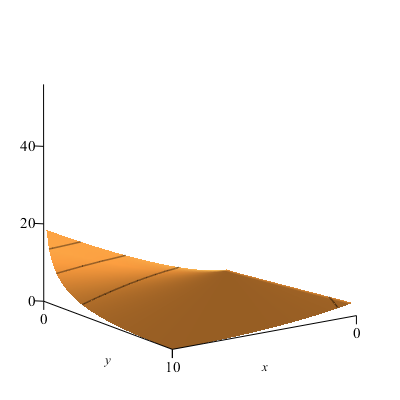}}
\subfloat[{$\overline{\mathcal{D}}^{\ent \oplus \ent^*}$}\label{D:bregman}]{\includegraphics[width=.3\textwidth]{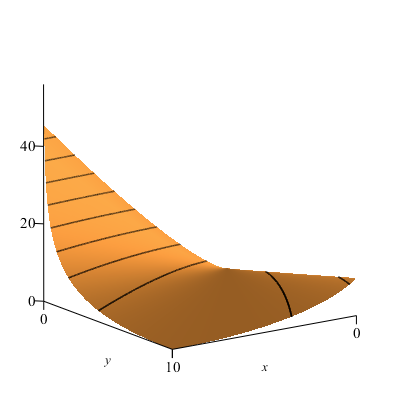}}
\subfloat[{$\overline{\mathcal{D}}^{\sigma_{\log}}$}\label{D:sigma}]{\includegraphics[width=.3\textwidth]{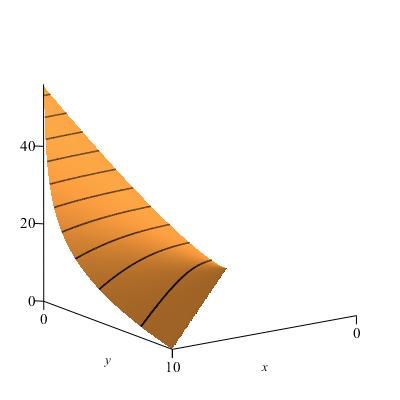}}
\end{center}
\caption{Three prototypical distances constructed from $\mathcal{H}(\log)$.}\label{fig:D}
\end{figure}

\begin{example}[Kullback--Leibler divergence and GBD $\overline{\mathcal{D}}^{F_{\log}}$]
\label{ex:Fitzlog}
The GBD constructed with the Fitzpatrick representative is
\begin{equation}
\mathcal{D}^{F_{\log}}(x,y) = \begin{cases} +\infty & \text{if~}  x\leq 0 \;\text{or}\;y\leq 0,\\
x\left(\W\left(\frac{xe}{y} \right)+\frac{1}{\W\left(\frac{xe}{y} \right)} -2 \right) & \text{otherwise}, \end{cases}
\end{equation}
where $\W$ is the principal branch of the Lambert $\W$ function. The above expression simplifies, except on the set $\{0\}\times\left[0,+\infty\right[$, to the lower closed version
\begin{align}\label{fitz:dist}
\overline{\mathcal{D}}^{F_{\log}}: \RR_+ \times \RR_+ & \rightarrow \left[0,+\infty\right]\\
(x,y) &\mapsto \begin{cases}
+\infty & \text{if~}  x<0 \;\text{or}\; y<0\; \text{or}\; (x>0 \; \text{and}\; y=0),\\
ye^{-1} & \text{if~}  x=0\; \text{and}\; y\geq 0,\\
x\left(\W\left(\frac{xe}{y} \right)+\frac{1}{\W\left(\frac{xe}{y} \right)} -2 \right) & \text{otherwise}.\nonumber
\end{cases}
\end{align}
This distance is shown in Figure~\eqref{D:fitz}. Taking the closure of the epigraph of $\mathcal{D}^{F_{\log}}$ admits $\overline{\mathcal{D}}^{F_{\log}}(0,y)=ye^{-1}$, the difference we see in \eqref{fitz:dist}. We opt to use the latter lower semicontinuous version of the distance in our later examples of proximity operators and envelopes. For information on the computation of $F_{\log}$ and $\mathcal{D}^{F_{\log}}$, see \cite[Example~.17.6]{BMcS06} and \cite[Example 3.2]{BDL19}, respectively.
\end{example}

\begin{example}[Kullback--Leibler divergence and GBD $\mathcal{D}^{\sigma_{\log}}$]
\label{ex:sigmalog}
The GBD constructed with the biggest member of $\mathcal{H}(\log)$ is
\begin{equation}
\mathcal{D}^{\sigma_{\log}}: (x,y) \mapsto -x\log(y) +\begin{cases}
x \log(x) & \text{if~} \log(y)\leq \log(x),\\
+\infty & \text{otherwise},
\end{cases}
\end{equation}
which may be recognized as equal, except at the point $(0,0)$, to the lower closed version
\begin{equation}\label{eqn:Dsigma}
\overline{\mathcal{D}}^{\sigma_{\log}}: (x,y) \mapsto  \begin{cases}
x \log(x) - x\log(y) & \text{if~} 0< y \leq x,\\
0 & \text{if~} x=y=0,\\
+\infty & \text{otherwise},
\end{cases}
\end{equation}
which is shown in Figure~\eqref{D:sigma}.
\end{example}

\section{Envelopes and proximity operators}
\label{sec:envelopes}

We next define formally the envelopes and the proximity operators.

\begin{definition}
\label{d:env&prox}
Given $\theta\colon X\to \left]-\infty, +\infty\right]$ and $\gamma\in \RR_{++}$, the \emph{left} and \emph{right $\mathcal{D}^{\star,h}_T$-envelopes} of $\theta$ with parameter $\gamma$ are respectively defined by
\begin{subequations}
\begin{align}
\label{e:leftenv}
\benv[\star,h]{\gamma,\theta}\colon X\to \left[-\infty, +\infty\right] \colon
y&\mapsto \inf_{x\in X} \left(\theta(x) +\frac{1}{\gamma}\mathcal{D}^{\star,h}_T(x,y)\right)\\
\label{e:rightenv} \text{and}\quad
\fenv[\star,h]{\gamma,\theta} \colon X\to \left[-\infty, +\infty\right] \colon
x&\mapsto
\inf_{y\in X} \left(\theta(y) +\frac{1}{\gamma}\mathcal{D}^{\star,h}_T(x,y)\right).
\end{align}
\end{subequations}
The \emph{left} and \emph{right proximity operator} of $\theta$ with parameter $\gamma$ are respectively defined by
\begin{subequations}
\begin{align}
\bprox{\gamma,\theta}^{\star,h} \colon X\rightrightarrows X \colon
y&\mapsto
\argmin_{x\in X} \left(\theta(x) +\frac{1}{\gamma}\mathcal{D}^{\star,h}_T(x,y)\right)\\
\text{and}\quad
\fprox{\gamma,\theta}^{\star,h} \colon X\rightrightarrows X \colon
x&\mapsto
\argmin_{y\in X} \left(\theta(y) +\frac{1}{\gamma}\mathcal{D}^{\star,h}_T(x,y)\right).
\end{align}
\end{subequations}
As before, the symbol $\star$ can be either $\flat$ or $\sharp$. When $T$ is point to point, we remove the symbol $\star$. When the symbol $\dagger$ appears next to the name of the representative function with the envelope or proximity operator, it is understood that the distance being used is the closed GBD $\overline{\mathcal{D}}_{T}^{\star,h}$. When the Bregman distance \eqref{d:Bregman_flat} or \eqref{d:Bregman_sharp} is used, we remove the name of the representative function in the envelopes and proximity operators.
\end{definition}

\begin{remark}[A selection operator for the proximity operator]
\label{r:env-prox}
We will find it useful to employ a selection map for the proximity operators.
\begin{enumerate}
\item\label{r:env-prox_left} 
If $\bproxs{\gamma,\theta}^{\star,h}(y)\in \bprox{\gamma,\theta}^{\star,h}(y)$, then
\begin{equation}
\benv[\star,h]{\gamma,\theta}(y) 
=\theta(\bproxs{\gamma,\theta}^{\star,h}(y)) +\frac{1}{\gamma}\mathcal{D}^{\star,h}_T(\bproxs{\gamma,\theta}^{\star,h}(y),y) 
\geq \theta(\bproxs{\gamma,\theta}^{\star,h}(y)).
\end{equation}

\item\label{r:env-prox_right} 
If $\fproxs{\gamma,\theta}^{\star,h}(x)\in \fprox{\gamma,\theta}^{\star,h}(x)$, then
\begin{equation}
\fenv[\star,h]{\gamma,\theta}(x) 
=\theta(\fproxs{\gamma,\theta}^{\star,h}(x)) +\frac{1}{\gamma}\mathcal{D}^{\star,h}_T(x,\fproxs{\gamma,\theta}^{\star,h}(x)) 
\geq \theta(\fproxs{\gamma,\theta}^{\star,h}(x)).
\end{equation}
\end{enumerate}
Sufficient conditions for the images of the proximity operators to be nonempty will be provided in Propositions~\ref{p:left prox} and \ref{p:right prox}; we only use selection operators in such cases.
\end{remark}

\begin{example}[Energy]
\label{ex:env:energy}
Let $f$ be the energy and $\theta : x \mapsto |x-1/2|$. In this case, $\partial f = \Id$, and the distances $\mathcal{D}^{F_{\Id}}$ and $\mathcal{D}^{\sigma_{\Id}}$ are given in Example~\ref{ex:dist:energy} and are both lsc. It is straightforward to determine that their corresponding envelopes and proximity operators are given by:
\begin{enumerate}
\item $\mathcal{D}^{F_{\Id}}$: the corresponding \emph{GBD envelopes} (both left $\benv[F_{\Id}]{\gamma,\theta}$ and right $\fenv[F_{\Id}]{\gamma,\theta}$) are equal to the \emph{Moreau envelope} with parameter $2\gamma$, and the corresponding proximity operators are equal to the respective Moreau proximity operators with parameter $2\gamma$.
\item $\mathcal{D}^{\sigma_{\Id}}$: the left and right proximity operators are both necessarily $\Id$, and our left and right envelopes for $\theta$ are exactly equal to $\theta$ for both the right and left envelopes, a fact that would hold true with any other choice of $\theta$.
\end{enumerate}
\end{example}

In the following result, part \ref{p:domain_scale} extends \cite[Proposition~2.1]{BDL17} and its proof is the same as the one in \cite[Proposition~12.22(i)]{BC17}. Parts \ref{p:domain_dom} and \ref{p:domain_dom'} are new and establish relationships between the domain of left and right envelopes with those of $S$ and $T$.

\begin{proposition}
\label{p:domain}
Let $\theta\colon X\to \left]-\infty, +\infty\right]$ and let $\gamma, \mu\in \RR_{++}$. Then the following hold:
\begin{enumerate}
\item\label{p:domain_scale} 
$\benv[\star,h]{\mu,\gamma\theta} =\gamma\benv[\star,h]{\gamma\mu,\theta}$ and $\fenv[\star,h]{\mu,\gamma\theta} =\gamma\fenv[\star,h]{\gamma\mu,\theta}$.
\item\label{p:domain_dom}
$\dom\benv[\star,h]{\gamma,\theta} \subseteq \dom T$ and $\dom\fenv[\star,h]{\gamma,\theta} \subseteq \dom S$. Moreover,
\begin{subequations}
\begin{align}
\dom\benv[\sharp,h]{\gamma,\theta} &=\bigcup_{x\in \dom\theta,\ \varepsilon\in \RP} T^{-1}(S^h_\varepsilon x) \quad\text{and}\\
\dom\fenv[\sharp,h]{\gamma,\theta} &=\left\{x\in X: \dom\theta\cap \bigcup_{\varepsilon\in \RP} T^{-1}(S^h_\varepsilon x)\neq \varnothing\right\}.
\end{align}
\end{subequations}
\item\label{p:domain_dom'}
If $\dom S\cap \dom\theta\neq \varnothing$ and $(\dom S\cap \dom\theta)\times \dom T\subseteq \dom \mathcal{D}^{\star,h}_T$, then $\dom\benv[\star,h]{\gamma,\theta} =\dom T$. If $\dom T\cap \dom\theta\neq \varnothing$ and $\dom S\times (\dom T\cap \dom\theta)\subseteq \dom \mathcal{D}^{\star,h}_T$, then $\dom\fenv[\star,h]{\gamma,\theta} =\dom S$.
\end{enumerate}
\end{proposition}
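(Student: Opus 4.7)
The plan is to handle the three items sequentially, since each builds on a slightly different observation, but all reduce to unpacking Definition~\eqref{D} together with the definition \eqref{eq:SE} of the enlargement $S_\varepsilon^h$.

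For part \ref{p:domain_scale}, the argument is pure algebra. Fix $y\in X$ and expand using the definition \eqref{e:leftenv}:
\begin{equation*}
\benv[\star,h]{\mu,\gamma\theta}(y) =\inf_{x\in X}\left(\gamma\theta(x)+\tfrac{1}{\mu}\mathcal{D}^{\star,h}_T(x,y)\right) =\gamma\inf_{x\in X}\left(\theta(x)+\tfrac{1}{\gamma\mu}\mathcal{D}^{\star,h}_T(x,y)\right),
\end{equation*}
where pulling $\gamma>0$ outside the infimum is legitimate. The right-envelope identity is identical, swapping the roles of the two variables. This will follow the pattern of \cite[Proposition~12.22(i)]{BC17}.

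For part \ref{p:domain_dom}, I will use the convention in \eqref{D} that $\mathcal{D}^{\star,h}_T(x,y)=+\infty$ unless $(x,y)\in\dom S\times\dom T$. If $\benv[\star,h]{\gamma,\theta}(y)<+\infty$, some summand in the infimum is finite, which forces $y\in\dom T$, giving the left-envelope domain inclusion; the right case is symmetric. For the explicit characterization in the $\sharp$ case, I will observe that $y\in\dom\benv[\sharp,h]{\gamma,\theta}$ iff there exists $x\in\dom\theta$ with $(x,y)\in\dom S\times\dom T$ and $\sup_{v\in Ty}(h(x,v)-\langle x,v\rangle)<+\infty$. The key rewrite is that this supremum is finite if and only if there is some $\varepsilon\in\RP$ with $h(x,v)-\langle x,v\rangle\leq\varepsilon$ for every $v\in Ty$, which by definition \eqref{eq:SE} is the same as $Ty\subseteq S^h_\varepsilon x$, i.e., $y\in T^{-1}(S^h_\varepsilon x)$ in the upper-inverse sense used here. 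Taking the union over admissible $x$ and $\varepsilon$ produces the stated formula. The right-envelope formula then follows by swapping which variable is quantified existentially, which is why it is written as a nonempty intersection with $\dom\theta$.

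For part \ref{p:domain_dom'}, I already have $\dom\benv[\star,h]{\gamma,\theta}\subseteq\dom T$ from part \ref{p:domain_dom}, so only the reverse inclusion must be shown. Pick any $x_0\in\dom S\cap\dom\theta$ (nonempty by hypothesis) and any $y\in\dom T$. The hypothesis $(\dom S\cap\dom\theta)\times\dom T\subseteq\dom\mathcal{D}^{\star,h}_T$ gives $\mathcal{D}^{\star,h}_T(x_0,y)<+\infty$, hence
\begin{equation*}
\benv[\star,h]{\gamma,\theta}(y) \leq \theta(x_0)+\tfrac{1}{\gamma}\mathcal{D}^{\star,h}_T(x_0,y)<+\infty,
\end{equation*}
so $y\in\dom\benv[\star,h]{\gamma,\theta}$. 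The right-envelope assertion is obtained by interchanging the roles of $x$ and $y$ and of $\dom S$ and $\dom T$.

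The only genuinely delicate step is the $\sharp$-formula in part \ref{p:domain_dom}, where one has to be careful that ``$\sup_{v\in Ty}(\cdot)<+\infty$'' is equivalent to the uniform bound ``$Ty\subseteq S^h_\varepsilon x$ for some $\varepsilon$''; the other steps are direct substitutions. No topological or monotonicity properties of $S$ or $T$ beyond those already embedded in the definitions are needed.
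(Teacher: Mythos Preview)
Your proposal is correct and follows essentially the same approach as the paper: the same algebraic factoring for \ref{p:domain_scale}, the same chain of equivalences (finiteness of the infimum $\Leftrightarrow$ existence of $x\in\dom\theta$ with $\mathcal{D}^{\star,h}_T(x,y)<+\infty$, and in the $\sharp$ case the rewrite of the finite supremum as $Ty\subseteq S^h_\varepsilon x$) for \ref{p:domain_dom}, and the same choice of a fixed $x_0\in\dom S\cap\dom\theta$ for the reverse inclusion in \ref{p:domain_dom'}. Your explicit remark that $T^{-1}(S^h_\varepsilon x)$ is to be read as the upper inverse is a useful clarification the paper leaves implicit.
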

\begin{proof}
\ref{p:domain_scale}: This is straightforward from the definition.

\ref{p:domain_dom}: We observe that 
\begin{subequations}
\label{e:dom}
\begin{align}
y\in \dom\benv[\star,h]{\gamma,\theta} 
&\iff \inf_{x\in X} \left(\theta(x) +\frac{1}{\gamma}\mathcal{D}^{\star,h}_T(x,y)\right) =\benv[\star,h]{\gamma,\theta}(y) <+\infty\\
&\iff \exists x\in X,\quad \theta(x) +\frac{1}{\gamma}\mathcal{D}^{\star,h}_T(x,y) <+\infty\\
&\label{e:domD} \iff \exists x\in \dom\theta,\quad \mathcal{D}^{\star,h}_T(x,y) <+\infty, 
\end{align}
\end{subequations}
since it always holds that $\theta(x) >-\infty$ and $\mathcal{D}^{\star,h}_T(x,y)\geq 0$.
On the one hand, \eqref{e:domD} implies that $y\in \dom T$, and hence $\dom\benv[\star,h]{\gamma,\theta} \subseteq \dom T$. To prove the first equality in (ii), note that, by definition of $\mathcal{D}^{\sharp,h}_T$,
\begin{subequations}
\begin{align}
\mathcal{D}^{\sharp,h}_T(x,y) <+\infty 
&\iff \sup_{v\in Ty} \left(h(x,v) -\scal{x}{v}\right) <+\infty\\
&\iff \exists\varepsilon\in \RP,\ \forall v\in Ty,\quad h(x,v) -\scal{x}{v}\leq \varepsilon\\
&\iff \exists\varepsilon\in \RP,\quad Ty\subseteq S^h_\varepsilon x\\
&\iff y\in \bigcup_{\varepsilon\in \RP} T^{-1}(S^h_\varepsilon x).
\end{align}
\end{subequations}
Combining with \eqref{e:dom} yields
\begin{equation}
\dom\benv[\sharp,h]{\gamma,\theta} =\bigcup_{x\in \dom\theta,\ \varepsilon\in \RP} T^{-1}(S^h_\varepsilon x).
\end{equation}
The proof for the right envelope is analogous. 

\ref{p:domain_dom'}: We will only prove the first claim because the second one is similar. In view of \ref{p:domain_dom}, it suffices to prove that $\dom T\subseteq \dom\benv[\star,h]{\gamma,\theta}$. Let $y\in \dom T$. By assumption, there exists $x_0\in \dom S\cap \dom\theta$. Then $(x_0,y)\in (\dom S\cap \dom\theta)\times \dom T\subseteq \dom \mathcal{D}^{\star,h}_T$ which gives $\mathcal{D}^{\star,h}_T(x_0,y) <+\infty$. Using \eqref{e:dom}, we obtain that $y\in \dom\benv[\star,h]{\gamma,\theta}$. This completes the proof.  
\end{proof}

The following proposition compares minimum values of the envelopes with those of the reference function $\theta$ over the domains of $S$ and $T$.

\begin{proposition}
\label{p:inequalities}
Let $\theta\colon X\to \left]-\infty, +\infty\right]$, $x\in \dom S$, $y\in \dom T$, and $\gamma, \mu\in \RR_{++}$ with $\gamma\leq \mu$. Then the following hold:
\begin{enumerate}
\item\label{p:inequalities_left}
Suppose that $\dom S\cap \dom\theta \neq\varnothing$ and that $Tz\subseteq Sz$ for all $z\in \dom T$. Then
\begin{subequations}
\begin{align}
\label{p:inequalities eq1}
&\inf\theta(\dom S)\leq \benv[\star,h]{\mu,\theta}(y)\leq \benv[\star,h]{\gamma,\theta}(y)\leq \theta(y)\\
\label{p:inequalities eq1'}
\text{and}\quad &\inf\theta(\overline{\dom}\, S)\leq \benv[\star,\dagger h]{\mu,\theta}(y)\leq \benv[\star,\dagger h]{\gamma,\theta}(y)\leq \benv[\star,h]{\gamma,\theta}(y)\leq \theta(y).
\end{align}
\end{subequations} 
Consequently, $\inf\theta(\dom S)\leq \inf\benv[\star,h]{\gamma,\theta}(X)\leq \inf\theta(\dom T)$, with equality throughout when $\dom S\subseteq \dom T$. Moreover, there exist $\alpha, \beta\in \left[\inf\theta(\dom S), \theta(y)\right]$ such that
\begin{equation}
\benv[\star,h]{\gamma,\theta}(y)\downarrow \alpha \text{~~as~~} \gamma\uparrow +\infty 
\quad\text{and}\quad
\benv[\star,h]{\gamma,\theta}(y)\uparrow \beta \text{~~as~~} \gamma\downarrow 0.   
\end{equation}

\item\label{p:inequalities_right}
Suppose that $\dom T\cap \dom\theta \neq\varnothing$ and that $Tz\subseteq Sz$ for all $z\in \dom S$. Then
\begin{subequations}
\begin{align}
\label{p:inequalities eq2}
&\inf\theta(\dom T)\leq \fenv[\star,h]{\mu,\theta}(x)\leq \fenv[\star,h]{\gamma,\theta}(x)\leq \theta(x)\\ 
\text{and}\quad &\inf\theta(\overline{\dom}\, T)\leq \fenv[\star,\dagger h]{\mu,\theta}(x)\leq \fenv[\star,\dagger h]{\gamma,\theta}(x)\leq \fenv[\star,h]{\gamma,\theta}(x)\leq \theta(x).
\end{align}
\end{subequations} 
Consequently, $\inf\theta(\dom T)\leq \inf\fenv[\star,h]{\gamma,\theta}(X)\leq \inf\theta(\dom S)$, with equality throughout when $\dom T\subseteq \dom S$. Moreover, there exist $\alpha, \beta\in \left[\inf\theta(\dom T), \theta(x)\right]$ such that
\begin{equation}
\fenv[\star,h]{\gamma,\theta}(x)\downarrow \alpha \text{~~as~~} \gamma\uparrow +\infty 
\quad\text{and}\quad
\fenv[\star,h]{\gamma,\theta}(x)\uparrow \beta \text{~~as~~} \gamma\downarrow 0.   
\end{equation}
\end{enumerate}
\end{proposition}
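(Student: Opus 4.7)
The plan is to establish part \ref{p:inequalities_left} by isolating a single key observation, then derive everything else from it; part \ref{p:inequalities_right} will follow by the same argument with $S$ and $T$ swapped. The key observation is that condition \ref{h:c} in the definition of representative function forces $\mathcal{D}^{\star,h}_T(z,z)=0$ whenever $z\in\dom T$ and $Tz\subseteq Sz$. Indeed, for any $v\in Tz\subseteq Sz$ we have $h(z,v)=\langle z,v\rangle$, hence $h(z,v)-\langle z,v\rangle=0$; taking either the infimum (for $\flat$) or the supremum (for $\sharp$) over the singleton-contribution $v\in Tz$ gives the vanishing of the distance at $(z,z)$.

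With that in hand, the four terms of \eqref{p:inequalities eq1} fall into place quickly. First, substituting $x=y$ into the defining infimum yields $\benv[\star,h]{\gamma,\theta}(y)\leq \theta(y)+0=\theta(y)$. Next, since $\mathcal{D}^{\star,h}_T(x,y)\geq 0$ and $\gamma\leq\mu$ means $1/\mu\leq 1/\gamma$, we have the pointwise inequality $\theta(x)+\frac{1}{\mu}\mathcal{D}^{\star,h}_T(x,y)\leq \theta(x)+\frac{1}{\gamma}\mathcal{D}^{\star,h}_T(x,y)$; taking infimum in $x$ gives the monotonicity. Finally, the leftmost inequality follows by restricting the infimum in $x$ to the effective domain: by the definition \eqref{D}, $\mathcal{D}^{\star,h}_T(x,y)<+\infty$ forces $x\in\dom S$, hence
\begin{equation}
\theta(x)+\tfrac{1}{\mu}\mathcal{D}^{\star,h}_T(x,y)\geq \theta(x)\geq \inf\theta(\dom S).
\end{equation}
For \eqref{p:inequalities eq1'}, the pointwise inequality $\overline{\mathcal{D}}^{\star,h}_T\leq \mathcal{D}^{\star,h}_T$ (which is immediate from the epigraphical closure defining $\overline{\mathcal{D}}^{\star,h}_T$) passes into the envelope to give $\benv[\star,\dagger h]{\gamma,\theta}\leq \benv[\star,h]{\gamma,\theta}$; the lower bound $\inf\theta(\overline{\dom}\,S)\leq \benv[\star,\dagger h]{\mu,\theta}(y)$ then follows as above, replacing $\dom S$ with its closure, since $\dom\overline{\mathcal{D}}^{\star,h}_T\subseteq \overline{\dom}\,S\times\overline{\dom}\,T$.

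The ``consequently'' assertion follows by taking $\inf_{y\in X}$ across the chain, using that $\benv[\star,h]{\gamma,\theta}(y)=+\infty$ outside $\dom T$ (Proposition~\ref{p:domain}\ref{p:domain_dom}). The equality case is a short bookkeeping step: $Tz\subseteq Sz$ on $\dom T$ already gives $\dom T\subseteq \dom S$, so the added hypothesis $\dom S\subseteq \dom T$ produces $\dom S=\dom T$, collapsing both outer bounds to a common value. For the limits, the mapping $\gamma\mapsto \benv[\star,h]{\gamma,\theta}(y)$ is non-increasing (step~2) and trapped in the interval $[\inf\theta(\dom S),\theta(y)]$, so by monotone convergence of real-valued monotone bounded nets the one-sided limits $\alpha=\lim_{\gamma\uparrow+\infty}$ and $\beta=\lim_{\gamma\downarrow 0}$ exist in that interval. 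Part \ref{p:inequalities_right} is proved identically by exchanging the roles of $S$ and $T$ and replacing $y\in\dom T$ with $x\in\dom S$, together with plugging $y=x$ into the right-envelope infimum using the hypothesis $Tz\subseteq Sz$ on $\dom S$. I do not anticipate a genuine obstacle; the only mild care is in tracking which domain inclusion is being used at each step and in verifying that the epigraphical closure comparison $\overline{\mathcal{D}}^{\star,h}_T\leq \mathcal{D}^{\star,h}_T$ transfers correctly to the envelopes.
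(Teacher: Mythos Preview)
Your proposal is correct and follows essentially the same approach as the paper: both isolate the key identity $\mathcal{D}^{\star,h}_T(z,z)=0$ for $z\in\dom T$ (the paper cites \cite[Remark~3.3(c)]{BM-L18}, you derive it directly from property~\ref{h:c}), then chain the nonnegativity of the distance, the monotonicity in $\gamma$, and the domain restriction $\mathcal{D}^{\star,h}_T(x,y)=+\infty$ for $x\notin\dom S$ to obtain \eqref{p:inequalities eq1}, with the closed version handled via $\overline{\mathcal{D}}^{\star,h}_T\leq\mathcal{D}^{\star,h}_T$. Your additional remark that $Tz\subseteq Sz$ on $\dom T$ already forces $\dom T\subseteq\dom S$ (so the equality hypothesis yields $\dom S=\dom T$) is a small clarification the paper leaves implicit.
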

\begin{proof}
The proof for the right envelopes follows the same steps as those for the left ones, and hence, the details are omitted. The assumption of $Tz\subseteq Sz$ for all $z\in \dom T$, together with \cite[Remark~3.3(c)]{BM-L18}, implies that
\begin{equation}
\label{e2:x=y}
\forall z\in \dom T,\quad \mathcal{D}^{\star,h}_T(z,z) =0.
\end{equation}
As $\mu \geq \gamma >0$, we have that, for all $x\in \dom S$,
\begin{equation}
\theta(x)\leq \theta(x) +\frac{1}{\mu}\mathcal{D}^{\star,h}_T(x,y)\leq \theta(x) +\frac{1}{\gamma}\mathcal{D}^{\star,h}_T(x,y).    
\end{equation}
Taking the infimum over $x\in \dom S$, with noting that $\mathcal{D}^{\star,h}_T(x,y) =+\infty$ for $x\notin \dom S$, yields
\begin{subequations}
\begin{align}
\inf\theta(\dom S) &\leq \inf_{x\in \dom S} \theta(x) +\frac{1}{\mu}\mathcal{D}^{\star,h}_T(x,y)= \inf_{x\in X} \theta(x) +\frac{1}{\mu}\mathcal{D}^{\star,h}_T(x,y)= \benv[\star,h]{\mu,\theta}(y)\\
&\leq \inf_{x\in \dom S} \theta(x) +\frac{1}{\gamma}\mathcal{D}^{\star,h}_T(x,y)=\inf_{x\in X} \theta(x) +\frac{1}{\gamma}\mathcal{D}^{\star,h}_T(x,y)= \benv[\star,h]{\gamma,\theta}(y)\\
&\leq \theta(y) +\frac{1}{\gamma}\mathcal{D}^{\star,h}_T(y,y)=\theta(y),    
\end{align}
\end{subequations}
where the last equality is due to \eqref{e2:x=y}. This proves \eqref{p:inequalities eq1}. 

Next, for all $x\in \overline{\dom}\, S$, it holds that $\theta(x)\leq \theta(x) +\frac{1}{\mu}\overline{\mathcal{D}}^{\star,h}_T(x,y)\leq \theta(x) +\frac{1}{\gamma}\overline{\mathcal{D}}^{\star,h}_T(x,y)$, and hence 
\begin{equation}\label{e:closed}
\inf\theta(\overline{\dom}\, S) =\inf_{x\in \overline{\dom}\, S} \theta(x)\leq \benv[\star,\dagger h]{\mu,\theta}(y)\leq \benv[\star,\dagger h]{\gamma,\theta}(y),
\end{equation}
where we used the fact that $\overline{\mathcal{D}}^{\star,h}_T(x,y) =+\infty$ for $x\notin \overline{\dom}\, S$. Noting also that $\overline{\mathcal{D}}^{\star,h}_T \leq \mathcal{D}^{\star,h}_T$, we have  $\benv[\star,\dagger h]{\gamma,\theta}(y) \leq \benv[\star,h]{\gamma,\theta}(y)$, which combined with \eqref{e:closed} and \eqref{p:inequalities eq1} implies \eqref{p:inequalities eq1'}.   

Now, taking infimum over $y\in \dom T$ in \eqref{p:inequalities eq1} and using the fact that $\dom\benv[\star,h]{\gamma,\theta}\subseteq \dom T$ (see Proposition~\ref{p:domain}\ref{p:domain_dom}), we obtain that 
\begin{equation}\label{e:infinf}
\inf\theta(\dom S)\leq \inf\benv[\star,h]{\gamma,\theta}(X)\leq \inf\theta(\dom T).   
\end{equation}
If $\dom S\subseteq \dom T$, then $\inf\theta(\dom S)\geq \inf\theta(\dom T)$, and the equalities in \eqref{e:infinf} must hold. The remaining conclusion follows directly from \eqref{p:inequalities eq1} and the fact that $\gamma\leq \mu$.
\end{proof}

\begin{remark}\label{r:optim}
Suppose that $\dom T =\dom S =:D$ and we want to solve the optimization problem
\begin{equation}
\min \theta(x) \quad\text{s.t.}\quad x\in D.    
\end{equation}
It is interesting to be able to state relationships between the optimal value of the problem and $\inf \benv[\star,h]{\gamma,\theta}(D)$ as well as $\inf \fenv[\star,h]{\gamma,\theta}(D)$. It is also important to establish relationships between the sets $\argmin \theta(D)$, $\argmin \benv[\star,h]{\gamma,\theta}(D)$, and $\argmin \fenv[\star,h]{\gamma,\theta}(D)$. We establish these relationships in the next result. 
\end{remark}

\begin{proposition}
\label{p:basic}
Suppose that $\dom T =\dom S =:D$ and that $Tz\subseteq Sz$ for all $z\in D$. 
Let $\theta\colon X\to \left]-\infty, +\infty\right]$ with $D\cap \dom\theta \neq\varnothing$, let $\gamma\in \RR_{++}$, and let $x, y\in D$. Set 
\begin{subequations}
\begin{align}
&\overleftarrow{A} :=\bigcup_{x\in \argmin\theta(D)} \{y\in D: Ty\cap Sx\neq \varnothing\},\quad 
\overleftarrow{B} :=\bigcup_{x\in \argmin\theta(D)} \{y\in D: Ty\subseteq Sx\}\\
\text{and~~}&\overrightarrow{A} :=\bigcup_{y\in \argmin\theta(D)} \{x\in D: Ty\cap Sx\neq \varnothing\},\quad 
\overrightarrow{B} :=\bigcup_{y\in \argmin\theta(D)} \{x\in D: Ty\subseteq Sx\}.
\end{align}    
\end{subequations}
Then the following hold:
\begin{enumerate}
\item\label{p:basic_min}
$\inf\theta(D) =\inf\benv[\star,h]{\gamma,\theta}(X) =\inf\fenv[\star,h]{\gamma,\theta}(X)$ and $\argmin\theta(D)\subseteq \argmin\benv[\star,h]{\gamma,\theta}(X)\cap \argmin\fenv[\star,h]{\gamma,\theta}(X)$.

\item\label{p:basic_flat}
$\overleftarrow{A}\subseteq \argmin\benv[\flat,h]{\gamma,\theta}(X)$ and $\overrightarrow{A}\subseteq \argmin\fenv[\flat,h]{\gamma,\theta}(X)$.

\item\label{p:basic_sharp}
If $\bprox{\gamma,\theta}^{\sharp,h}(y)\neq \varnothing$ for all $y\in \argmin \benv[\sharp,h]{\gamma,\theta}(X)$, then $\argmin\benv[\sharp,h]{\gamma,\theta}(X) \subseteq \overleftarrow{B}$. Consequently, $\argmin\benv[\sharp,h]{\gamma,\theta}(X) \subseteq \overleftarrow{B}\subseteq \overleftarrow{A} \subseteq \argmin\benv[\flat,h]{\gamma,\theta}(X)$.

\item\label{p:basic_sharp_right}
If $\fprox{\gamma,\theta}^{\sharp,h}(y)\neq \varnothing$ for all $y\in \argmin \fenv[\sharp,h]{\gamma,\theta}(X)$, then $\argmin\fenv[\sharp,h]{\gamma,\theta}(X) \subseteq \overrightarrow{B}$. Consequently, $\argmin\fenv[\sharp,h]{\gamma,\theta}(X) \subseteq \overrightarrow{B}\subseteq \overrightarrow{A} \subseteq \argmin\fenv[\flat,h]{\gamma,\theta}(X)$.
\end{enumerate}
\end{proposition}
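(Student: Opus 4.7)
The plan is to chain the four parts together, using Proposition~\ref{p:inequalities} as the starting point and exploiting the representation property \ref{h:c} of $h$ to control the vanishing of $\mathcal{D}^{\star,h}_T$. Since $\dom S=\dom T=D$, the hypothesis $Tz\subseteq Sz$ for all $z\in D$ together with \cite[Remark~3.3(c)]{BM-L18} gives $\mathcal{D}^{\star,h}_T(z,z)=0$ for every $z\in D$, which will be the workhorse identity for (i) and which also yields $\mathcal{D}^{\flat,h}_T(x,y)=0$ whenever $Ty\cap Sx\neq\varnothing$ (because then the infimum in \eqref{Dsharp} is attained at a $v$ with $h(x,v)=\langle x,v\rangle$).

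For \ref{p:basic_min}, I would apply Proposition~\ref{p:inequalities} with $\dom S\subseteq \dom T$ and $\dom T\subseteq \dom S$ (both trivially true since they coincide) to collapse the three infima to $\inf\theta(D)$. The inclusion $\argmin\theta(D)\subseteq \argmin\benv[\star,h]{\gamma,\theta}(X)$ then follows because, for $y\in\argmin\theta(D)\subseteq D$, the bound $\benv[\star,h]{\gamma,\theta}(y)\leq \theta(y)+\frac{1}{\gamma}\mathcal{D}^{\star,h}_T(y,y)=\theta(y)=\inf\theta(D)$ matches the lower bound from part \ref{p:basic_min}; an identical argument handles $\fenv[\star,h]{\gamma,\theta}$.

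For \ref{p:basic_flat}, pick $y\in\overleftarrow{A}$ and an associated $x_0\in\argmin\theta(D)$ with $Ty\cap Sx_0\neq\varnothing$. Choosing $v_0$ in this intersection, the representation property gives $h(x_0,v_0)=\langle x_0,v_0\rangle$, so $\mathcal{D}^{\flat,h}_T(x_0,y)\leq h(x_0,v_0)-\langle x_0,v_0\rangle=0$, hence equals $0$. Therefore
\begin{equation}
\benv[\flat,h]{\gamma,\theta}(y)\leq \theta(x_0)+\tfrac{1}{\gamma}\mathcal{D}^{\flat,h}_T(x_0,y)=\inf\theta(D)=\inf\benv[\flat,h]{\gamma,\theta}(X),
\end{equation}
which forces $y\in\argmin\benv[\flat,h]{\gamma,\theta}(X)$; the right-envelope claim is symmetric.

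For \ref{p:basic_sharp}, take $y\in\argmin\benv[\sharp,h]{\gamma,\theta}(X)$ and, by hypothesis, pick $x_0\in\bprox{\gamma,\theta}^{\sharp,h}(y)$. Then $\theta(x_0)+\frac{1}{\gamma}\mathcal{D}^{\sharp,h}_T(x_0,y)=\benv[\sharp,h]{\gamma,\theta}(y)=\inf\theta(D)$ by \ref{p:basic_min}; since both summands are bounded below ($\theta(x_0)\geq \inf\theta(D)$ and $\mathcal{D}^{\sharp,h}_T(x_0,y)\geq 0$), each must attain its lower bound. Hence $x_0\in\argmin\theta(D)$ and $\sup_{v\in Ty}(h(x_0,v)-\langle x_0,v\rangle)=0$; combined with $h(x_0,v)\geq\langle x_0,v\rangle$ pointwise, this forces $h(x_0,v)=\langle x_0,v\rangle$ for every $v\in Ty$, i.e.\ $Ty\subseteq Sx_0$, which is precisely $y\in\overleftarrow{B}$. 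The chain $\overleftarrow{B}\subseteq \overleftarrow{A}$ is immediate because $y\in D$ implies $Ty\neq\varnothing$, and $\overleftarrow{A}\subseteq\argmin\benv[\flat,h]{\gamma,\theta}(X)$ is \ref{p:basic_flat}. Part \ref{p:basic_sharp_right} is handled by the same argument on the right side. The only subtle point (the likely obstacle) is the direction $\sup=0\Rightarrow Ty\subseteq Sx_0$ in the $\sharp$-case: it is crucial that the supremum is of nonnegative quantities so that it vanishes only when every term does, which is why the $\flat$-argument needs only existence of one good $v$ while the $\sharp$-argument requires the full image $Ty$ to sit inside $Sx_0$.
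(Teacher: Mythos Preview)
Your proof is correct and follows essentially the same route as the paper: invoke Proposition~\ref{p:inequalities} for the equality of infima in \ref{p:basic_min}, use $\mathcal{D}^{\star,h}_T(z,z)=0$ for the $\argmin$ inclusion, exploit the representation property to get $\mathcal{D}^{\flat,h}_T(x_0,y)=0$ in \ref{p:basic_flat}, and in \ref{p:basic_sharp} split the minimizing sum into its two nonnegative-deficit parts to force $x_0\in\argmin\theta(D)$ and $Ty\subseteq Sx_0$. The only cosmetic difference is that the paper cites \cite[Remark~3.3(b) and Proposition~3.7(b)]{BM-L18} for the two implications $Ty\cap Sx\neq\varnothing\Rightarrow \mathcal{D}^{\flat,h}_T(x,y)=0$ and $\mathcal{D}^{\sharp,h}_T(x,y)=0\Rightarrow Ty\subseteq Sx$, whereas you (correctly) unpack them from property~\ref{h:c} directly; one very minor omission is that in \ref{p:basic_sharp} you should note $x_0\in D$ (which follows from $\mathcal{D}^{\sharp,h}_T(x_0,y)<+\infty$) before asserting $\theta(x_0)\geq\inf\theta(D)$.
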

\begin{proof}
\ref{p:basic_min}: Since $\dom T =\dom S =D$, Proposition~\ref{p:inequalities} implies that $\inf\theta(D) =\inf\benv[\star,h]{\gamma,\theta}(X) =\inf\fenv[\star,h]{\gamma,\theta}(X)$. Now, let $z\in \argmin\theta(D)$. Then $z\in D$ and, by assumption, $Tz\subseteq Sz$, from which we have $\mathcal{D}^{\star,h}_T(z,z) =0$. Therefore, $\theta(z)+\frac{1}{\gamma}\mathcal{D}_T^{\star,h}(z,z) =\theta(z) =\inf\theta(D) =\inf\benv[\star,h]{\gamma,\theta}(X) =\inf\fenv[\star,h]{\gamma,\theta}(X)$, which implies that $z\in \argmin\benv[\star,h]{\gamma,\theta}(X)$ and also $z\in \argmin\fenv[\star,h]{\gamma,\theta}(X)$. We obtain that $\argmin\theta(D)\subseteq \argmin\benv[\star,h]{\gamma,\theta}(X)\cap \argmin\fenv[\star,h]{\gamma,\theta}(X)$. 

\ref{p:basic_flat}: Let any $y\in \overleftarrow{A}$. Then $y\in D$ and there exists $x\in \argmin\theta(D)$ such that $Ty\cap Sx\neq \varnothing$. By \cite[Remark~3.3(b)]{BM-L18}, $\mathcal{D}_T^{\flat,h}(x,y) =0$. Using \ref{p:basic_min} with $\star=\flat$, we have
\begin{equation}
\inf\theta(D) =\inf\benv[\flat,h]{\gamma,\theta}(X) \leq \benv[\flat,h]{\gamma,\theta}(y)\leq \theta(x) +\frac{1}{\gamma}\mathcal{D}^{\flat,h}_T(x,y) =\theta(x) =\inf\theta(D),  
\end{equation}
which implies that $y\in \argmin\benv[\flat,h]{\gamma,\theta}(X)$. Hence, $\overleftarrow{A}\subseteq \argmin\benv[\flat,h]{\gamma,\theta}(X)$. Similarly, we have that $\overrightarrow{A}\subseteq \argmin\fenv[\flat,h]{\gamma,\theta}(X)$. 

\ref{p:basic_sharp}: Let any $y\in \argmin\benv[\sharp,h]{\gamma,\theta}(X)$. By assumption, there exists $x\in X$ such that $\theta(x) +\frac{1}{\gamma}\mathcal{D}^{\sharp,h}_T(x,y) =\benv[\sharp,h]{\gamma,\theta}(y) <+\infty$. Then, we must have $x\in \dom S =D$ and, by \ref{p:basic_min} with $\star=\sharp$,
\begin{equation}
\inf\theta(D) =\inf\benv[\sharp,h]{\gamma,\theta}(X) =\benv[\sharp,h]{\gamma,\theta}(y) =\theta(x) +\frac{1}{\gamma}\mathcal{D}^{\sharp,h}_T(x,y) \geq \theta(x)\geq \inf\theta(D).    
\end{equation}
Therefore, $\theta(x) =\inf \theta(D)$ and $\mathcal{D}^{\sharp,h}_T(x,y) =0$. While the former implies $x\in \argmin\theta(D)$, the latter implies $Ty\subseteq Sx$ (see \cite[Proposition~3.7(b)]{BM-L18}). We deduce that $y\in \overleftarrow{B}$.

\ref{p:basic_sharp_right}: This is similar to \ref{p:basic_sharp}. 
\end{proof}

Proposition~\ref{p:basic} shows that the envelopes can be used to automatically impose the constraints. Namely, they transform the original constrained problem into an unconstrained one. This remark justifies the assumption imposed in the next proposition.

For $A\subseteq X$, denote by $\overline{A}^w$ its weak closure. Namely, $\overline{A}^w$ contains all the (weak) limits of weakly convergent sequences contained in $A$. Recall that a set is said to be \emph{precompact} when its closure is compact. 

\begin{proposition}[Left proximity operators]
\label{p:left prox}
Let $\theta\in \Gamma_0(X)$ and $y\in \dom T$. Suppose that $\dom S\cap \dom\theta\neq \varnothing$ and $(\dom S\cap \dom\theta)\times \{y\}\subseteq \dom \mathcal{D}^{\sharp,h}_T$. Then the following hold:
\begin{enumerate}
\item\label{p:left prox_theta+}
Suppose that $\varphi_\mu(\cdot) :=\theta(\cdot) +\frac{1}{\mu}\mathcal{D}^{\sharp,h}_T(\cdot,y)$ is coercive for some $\mu\in \RR_{++}$. Then, for all $\gamma\in \left]0,\mu \right]$, $\varnothing\neq \bprox{\gamma,\theta}^{\sharp,h}(y)\subseteq \dom S\cap \dom\theta$. If $Tz\subseteq Sz$ for all $z\in \dom T$, then 
\begin{equation}
P :=\bigcup_{\gamma\in \left]0,\mu \right]} \bprox{\gamma,\theta}^{\sharp,h}(y)\subseteq \lev{\theta(y)} \varphi_\mu.
\end{equation}
If, in addition, $y \in \dom \theta$, then $P$ is weakly precompact.

\item\label{p:left prox_theta}
Suppose that $\theta$ is coercive. Then, for all $\gamma\in \RR_{++}$, $\varnothing\neq \bprox{\gamma,\theta}^{\sharp,h}(y)\subseteq \dom S\cap \dom\theta$. If $Tz\subseteq Sz$ for all $z\in \dom T$, then 
\begin{equation}
P :=\bigcup_{\gamma\in \RR_{++}} \bprox{\gamma,\theta}^{\sharp,h}(y)\subseteq \lev{\theta(y)} \theta.
\end{equation}
If, in addition, $y \in \dom \theta$, then $P$ is weakly precompact.
\end{enumerate}
\end{proposition}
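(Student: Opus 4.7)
The plan is to work with the objective $\psi_\gamma(x) := \theta(x) + \frac{1}{\gamma}\mathcal{D}^{\sharp,h}_T(x,y)$---so that $\psi_\mu = \varphi_\mu$---and to prove attainment of its infimum via the direct method of the calculus of variations. First I would verify that $\psi_\gamma$ is proper, convex, and lower semicontinuous: properness follows from the hypotheses $\dom S\cap \dom\theta\neq \varnothing$ and $(\dom S\cap \dom\theta)\times \{y\}\subseteq \dom \mathcal{D}^{\sharp,h}_T$, which together yield $\dom\psi_\gamma\neq \varnothing$; convexity comes from $\mathcal{D}^{\sharp,h}_T(\cdot,y) = \sup_{v\in Ty}(h(\cdot,v) - \langle \cdot,v\rangle)$ being a supremum of convex functions (by definition of $h\in \mathcal{H}(S)$) together with $\theta\in \Gamma_0(X)$; and lower semicontinuity uses that for each fixed $v$, $h(\cdot,v)$ is norm-lsc (inherited from the norm $\times$ weak${}^*$-lsc of $h$) while $\langle \cdot,v\rangle$ is norm-continuous, so $\mathcal{D}^{\sharp,h}_T(\cdot,y)$ is a sup of norm-lsc functions, hence norm-lsc; being convex on a reflexive space, it is then weak-lsc.

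For part \ref{p:left prox_theta+}, when $\gamma\leq \mu$ the inequality $\mathcal{D}^{\sharp,h}_T\geq 0$ yields $\psi_\gamma \geq \varphi_\mu$, so $\psi_\gamma$ inherits coercivity from $\varphi_\mu$. The direct method in the reflexive Banach space $X$ then produces a minimizer: any minimizing sequence is norm-bounded by coercivity, admits a weakly convergent subsequence by reflexivity, and the weak limit is a minimizer by weak-lsc. Hence $\bprox{\gamma,\theta}^{\sharp,h}(y)\neq \varnothing$, and every minimizer lies in $\dom\psi_\gamma\subseteq \dom\theta\cap \dom S$. For the inclusion $P\subseteq \lev{\theta(y)}\varphi_\mu$, I would use the hypothesis $Tz\subseteq Sz$ at $z=y$ together with property~\ref{h:c} of representatives to obtain $h(y,v) = \langle y,v\rangle$ for every $v\in Ty$, and therefore $\mathcal{D}^{\sharp,h}_T(y,y) = 0$. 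Then for any $z\in \bprox{\gamma,\theta}^{\sharp,h}(y)$,
\begin{equation}
\varphi_\mu(z) \leq \theta(z) + \frac{1}{\gamma}\mathcal{D}^{\sharp,h}_T(z,y) \leq \theta(y) + \frac{1}{\gamma}\mathcal{D}^{\sharp,h}_T(y,y) = \theta(y),
\end{equation}
so $z\in \lev{\theta(y)}\varphi_\mu$; taking the union over $\gamma\in \left]0,\mu\right]$ gives $P\subseteq \lev{\theta(y)}\varphi_\mu$. When in addition $y\in \dom\theta$, $\theta(y) <+\infty$ and this sublevel set is norm-bounded (by coercivity of $\varphi_\mu$) and weakly closed (as a sublevel set of a convex lsc function), hence weakly compact in $X$, so $P$ is weakly precompact.

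Part \ref{p:left prox_theta} reduces to part \ref{p:left prox_theta+} with $\mu = \gamma$: coercivity of $\theta$ forces $\psi_\gamma\geq \theta$ to be coercive for every $\gamma\in \RR_{++}$. The chain in the displayed inequality then specializes to $\theta(z)\leq \theta(y)$, sharpening the inclusion to $P\subseteq \lev{\theta(y)}\theta$, and weak precompactness of this sublevel set follows by the same bounded-plus-weakly-closed argument applied directly to $\theta$. I expect the main technical obstacle to be the rigorous justification of weak-lsc for $\mathcal{D}^{\sharp,h}_T(\cdot,y)$---specifically, controlling the jump to $+\infty$ outside $\dom S$ imposed by the definition; this can be handled using the convexity of the closure of $\dom S$ in the reflexive setting (a consequence of maximal monotonicity) to ensure weak limits of minimizing sequences remain in the effective domain where the sup representation applies and weak-lsc is immediate.
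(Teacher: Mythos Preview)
Your proposal is correct and follows essentially the same route as the paper: verify that $\psi_\gamma\in\Gamma_0(X)$, inherit coercivity from $\varphi_\mu$ via $\psi_\gamma\geq\varphi_\mu$ for $\gamma\leq\mu$, apply the direct method in the reflexive space to obtain a minimizer, and use $\mathcal{D}^{\sharp,h}_T(y,y)=0$ (from $Ty\subseteq Sy$) to get the sublevel inclusion and weak precompactness. The only cosmetic differences are that the paper cites \cite[Lemma~3.17(b)]{BM-L18} for the lower semicontinuity of $\mathcal{D}^{\sharp,h}_T(\cdot,y)$ and \cite[Theorem~5.4.4]{Schirotzek2007} for existence of the minimizer, whereas you sketch both arguments directly; the technical caveat you raise about lsc at points of $\overline{\dom}\,S\setminus\dom S$ is exactly the subtlety the paper's citation is meant to absorb.
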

\begin{proof} 
For each $\gamma\in \RR_{++}$, set $\varphi_\gamma(\cdot) :=\theta(\cdot) +\frac{1}{\gamma}D^{\sharp,h}_T(\cdot,y)$. It follows from $\dom S\cap \dom\theta\neq \varnothing$ and $(\dom S\cap \dom\theta)\times \{y\}\subseteq \dom \mathcal{D}^{\sharp,h}_T$ that $\varphi_\gamma$ is proper. By the same argument as in \cite[Lemma~3.17(b)]{BM-L18}, $\mathcal{D}^{\sharp,h}_T(\cdot,y)$ is (strongly) lsc on $X$, and so is $\varphi_\gamma$. Since convexity of $\varphi_\gamma$ follows from that of $\theta$ and $\mathcal{D}^{\sharp,h}_T(\cdot,y)$, we obtain that $\varphi_\gamma\in \Gamma_0(X)$.

\ref{p:left prox_theta+}: Let $\gamma\in \left]0,\mu \right]$. Then $\varphi_\gamma\geq \varphi_{\mu}$, and the coercivity of $\varphi_\gamma$ follows from the assumption that $\varphi_\mu$ is coercive. By combining with the fact that $\varphi_\gamma\in \Gamma_0(X)$ and using \cite[Theorem~5.4.4]{Schirotzek2007}, there exists $z_\gamma\in X$ such that $\varphi_\gamma(z_\gamma) =\min_{x\in X} \varphi_\gamma(x)< +\infty$. We obtain that $\theta(z_\gamma) +\frac{1}{\gamma}\mathcal{D}^{\sharp,h}_T(z_\gamma,y) =\benv[\sharp,h]{\gamma,\theta}(y) <+\infty$, and so $\bprox{\gamma,\theta}^{\sharp,h}(y)\neq \varnothing$. Now, take an arbitrary $s_\gamma\in \bprox{\gamma,\theta}^{\sharp,h}(y)$. Then $\theta(s_\gamma) +\frac{1}{\gamma}\mathcal{D}^{\sharp,h}_T(s_\gamma,y) =\benv[\sharp,h]{\gamma,\theta}(y) <+\infty$. Since $\theta(s_\gamma) >-\infty$ and $\mathcal{D}^{\sharp,h}_T(s_\gamma,y)\geq 0$, we must have that $\mathcal{D}^{\sharp,h}_T(s_\gamma,y) <+\infty$ and $\theta(s_\gamma) <+\infty$, which yield $s_\gamma\in \dom S\cap \dom\theta$. Hence, $\bprox{\gamma,\theta}^{\sharp,h}(y)\subseteq \dom S\cap \dom\theta$. 

Next, assume that $Tz\subseteq Sz$ for all $z\in \dom T$. We have from the rightmost inequality in \eqref{p:inequalities eq1} of Proposition~\ref{p:inequalities}\ref{p:inequalities_left} that, for all $\gamma\in \left]0,\mu \right]$ and $s_\gamma\in \bprox{\gamma,\theta}^{\sharp,h}(y)$,  
\begin{equation}
\theta(y)\geq \benv[\sharp,h]{\gamma,\theta}(y) =\theta(s_{\gamma}) +\frac{1}{\gamma}\mathcal{D}^{\sharp,h}_T(s_{\gamma},y)\geq \varphi_\mu(s_\gamma) =\theta(s_{\gamma}) +\frac{1}{\mu}\mathcal{D}^{\sharp,h}_T(s_{\gamma},y).
\end{equation}
Therefore, 
\begin{equation}
P =\bigcup_{\gamma\in \left]0,\mu \right]} \bprox{\gamma,\theta}^{\sharp,h}(y)\subseteq  \lev{\theta(y)} \varphi_\mu =\{z\in X: \varphi_\mu(z)\leq \theta(y)\}.
\end{equation}
Now, assume that $y \in \dom \theta$, so $\theta(y) < +\infty$. Since $\varphi_\mu$ is convex and lsc, it is weakly lsc. Moreover, $\varphi_\mu$ is coercive, so its level set $\lev{\theta(y)} \varphi_\mu$ is bounded and weakly closed. By \cite[Theorem 3.17]{Brezis2011}, $\lev{\theta(y)} \varphi_\mu$ is weakly compact. This directly implies that $\overline{P}^w \subseteq \lev{\theta(y)} \varphi_\mu$. Being a weakly closed subset of a weakly compact set, $\overline{P}^w$ is also weakly compact. Hence, $P$ is weakly precompact.

\ref{p:left prox_theta}: Since $\theta$ is coercive, we have that $\varphi_\mu$ is also coercive for all $\mu\in \RR_{++}$. The first conclusion follows from \ref{p:left prox_theta+}. Now, assume that $Tz\subseteq Sz$ for all $z\in \dom T$. Then, by the rightmost inequality in \eqref{p:inequalities eq1} of Proposition~\ref{p:inequalities}\ref{p:inequalities_left}, for all $\gamma\in \RR_{++}$ and $s_\gamma\in \bprox{\gamma,\theta}^{\sharp,h}(y)$,    
\begin{equation}
\theta(y)\geq \benv[\sharp,h]{\gamma,\theta}(y) =\theta(s_{\gamma}) +\frac{1}{\gamma}\mathcal{D}^{\sharp,h}_T(s_{\gamma},y)\geq \theta(s_{\gamma}).
\end{equation}
The rest of the proof is similar to that of \ref{p:left prox_theta+}.
\end{proof}

\begin{proposition}[Right proximity operators]
\label{p:right prox}
Let $\theta\colon X\to \left]-\infty, +\infty\right]$ be proper and lsc, and let $x\in \dom S$. Suppose that $X$ is finite-dimensional, that $\dom T\cap \dom\theta\neq \varnothing$, that $\{x\}\times (\dom T\cap \dom\theta)\subseteq \dom \mathcal{D}^{\star,h}_T$, and that $\mathcal{D}^{\star,h}_T(x,\cdot)$ is lsc. Then the following hold:
\begin{enumerate}
\item\label{p:right prox_theta+}
Suppose that $\varphi_\mu(\cdot) :=\theta(\cdot) +\frac{1}{\mu}\mathcal{D}^{\star,h}_T(x,\cdot)$ is coercive for some $\mu\in \RR_{++}$. Then, for all $\gamma\in \left]0,\mu \right]$, $\varnothing\neq \fprox{\gamma,\theta}^{\star,h}(x)\subseteq \dom T\cap \dom\theta$. If $Tz\subseteq Sz$ for all $z\in \dom S$, then 
\begin{equation}
P :=\bigcup_{\gamma\in \left]0,\mu \right]} \fprox{\gamma,\theta}^{\star,h}(x)\subseteq \lev{\theta(x)} \varphi_\mu.
\end{equation}
If, in addition, $x \in \dom \theta$, then $P$ is bounded.

\item\label{p:right prox_theta}
Suppose that $\theta$ is coercive. Then, for all $\gamma\in \RR_{++}$, $\varnothing\neq \fprox{\gamma,\theta}^{\star,h}(x)\subseteq \dom T\cap \dom\theta$. If $Tz\subseteq Sz$ for all $z\in \dom S$, then 
\begin{equation}
P :=\bigcup_{\gamma\in \RR_{++}} \fprox{\gamma,\theta}^{\star,h}(x)\subseteq \lev{\theta(x)} \theta.
\end{equation}
If, in addition, $x \in \dom \theta$, then $P$ is bounded. 
\end{enumerate}
\end{proposition}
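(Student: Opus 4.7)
The plan is to mirror the structure of the proof of Proposition~\ref{p:left prox} while adjusting for the fact that $X$ is now finite-dimensional and $\theta$ is no longer assumed convex. For each $\gamma\in\RR_{++}$, I will set $\varphi_\gamma(\cdot) := \theta(\cdot) + \frac{1}{\gamma}\mathcal{D}^{\star,h}_T(x,\cdot)$. First I verify that $\varphi_\gamma$ is proper: the hypotheses $\dom T\cap \dom\theta\neq \varnothing$ and $\{x\}\times(\dom T\cap \dom\theta)\subseteq \dom \mathcal{D}^{\star,h}_T$ provide at least one point at which $\varphi_\gamma$ is finite. Lower semicontinuity of $\varphi_\gamma$ follows immediately from the assumed lsc of $\theta$ and of $\mathcal{D}^{\star,h}_T(x,\cdot)$. (Note that here, unlike in the left case, we cannot deduce lsc of the distance from general principles, which is why it is imposed as a hypothesis.)

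For part \ref{p:right prox_theta+}, fix $\gamma\in \left]0,\mu\right]$. Since $\mathcal{D}^{\star,h}_T(x,\cdot)\geq 0$ and $\gamma\leq \mu$, we have $\varphi_\gamma\geq \varphi_\mu$, so coercivity of $\varphi_\mu$ transfers to $\varphi_\gamma$. Because $X$ is finite-dimensional and $\varphi_\gamma$ is proper, lsc, and coercive, the classical Weierstrass theorem guarantees $\varphi_\gamma$ attains its infimum, producing $z_\gamma\in X$ with $\varphi_\gamma(z_\gamma) = \fenv[\star,h]{\gamma,\theta}(x) < +\infty$. Any $s_\gamma\in \fprox{\gamma,\theta}^{\star,h}(x)$ satisfies $\theta(s_\gamma)<+\infty$ and $\mathcal{D}^{\star,h}_T(x,s_\gamma)<+\infty$ (since each term is bounded below and their sum is finite), so $s_\gamma\in \dom T\cap \dom\theta$, giving the nonemptiness and inclusion. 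For the refined statement, assume $Tz\subseteq Sz$ for all $z\in \dom S$. Then Proposition~\ref{p:inequalities}\ref{p:inequalities_right}, applied via \eqref{p:inequalities eq2}, gives
\begin{equation}
\theta(x)\geq \fenv[\star,h]{\gamma,\theta}(x) = \theta(s_\gamma)+\tfrac{1}{\gamma}\mathcal{D}^{\star,h}_T(x,s_\gamma) \geq \theta(s_\gamma)+\tfrac{1}{\mu}\mathcal{D}^{\star,h}_T(x,s_\gamma) = \varphi_\mu(s_\gamma)
\end{equation}
for every $s_\gamma\in \fprox{\gamma,\theta}^{\star,h}(x)$ and every $\gamma\in \left]0,\mu\right]$. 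Hence $P\subseteq \lev{\theta(x)}\varphi_\mu$. If in addition $x\in \dom\theta$, then $\theta(x)<+\infty$, and the coercivity of $\varphi_\mu$ together with lsc forces $\lev{\theta(x)}\varphi_\mu$ to be closed and bounded, i.e., compact in finite dimension, so $P$ is bounded.

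For part \ref{p:right prox_theta}, if $\theta$ is coercive then $\varphi_\mu$ is coercive for every $\mu\in \RR_{++}$ (since $\mathcal{D}^{\star,h}_T(x,\cdot)\geq 0$), and the existence and inclusion statements follow from part \ref{p:right prox_theta+} applied to each $\mu$. Under $Tz\subseteq Sz$, the same chain of inequalities with $\frac{1}{\mu}$ replaced by $0$ yields $\theta(s_\gamma)\leq \theta(x)$ for every $s_\gamma\in \fprox{\gamma,\theta}^{\star,h}(x)$ and every $\gamma\in \RR_{++}$, so $P\subseteq \lev{\theta(x)}\theta$. If $x\in \dom\theta$, coercivity of $\theta$ gives boundedness of this level set.

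The routine aspects are the topological attainment argument and the two chains of inequalities; the only subtlety worth flagging is the use of finite-dimensionality, which replaces the weak compactness machinery (Brezis, Eberlein--\v Smulian) employed in the left-envelope proof with the elementary Heine--Borel argument and lets us drop the convexity assumption on $\theta$. I expect no genuine obstacle beyond carefully tracking domain conditions to ensure $\varphi_\gamma$ is proper and that $s_\gamma$ indeed lies in $\dom T\cap \dom\theta$.
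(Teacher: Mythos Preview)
Your proposal is correct and follows essentially the same route as the paper: set up $\varphi_\gamma$, check it is proper, lsc, and coercive, obtain a minimizer via the finite-dimensional Weierstrass argument (the paper spells this out with a minimizing sequence and Bolzano--Weierstrass, you cite it by name), and then use the rightmost inequality in Proposition~\ref{p:inequalities}\ref{p:inequalities_right} to place $P$ inside the appropriate sublevel set. Your remark that finite-dimensionality replaces the weak compactness/convexity machinery of the left case is exactly the point and matches the paper's treatment.
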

\begin{proof}
Arguing as in the proof of Proposition~\ref{p:left prox}, for all $\gamma\in \left]0,\mu \right]$ in case \ref{p:right prox_theta+} and for all $\gamma\in \RR_{++}$ in case \ref{p:right prox_theta}, $\varphi_\gamma(\cdot) :=\theta(\cdot) +\frac{1}{\gamma}D^{\star,h}_T(x,\cdot)$ is proper, lsc, and coercive. Since $\fenv[\star,h]{\gamma,\theta}(x) =\inf_{y\in X} \varphi_\gamma(y) <+\infty$, we can take a sequence $(y_n)_{n\in \mathbb{N}}$ in $X$ such that $\varphi_\gamma(y_n)\to \fenv[\star,h]{\gamma,\theta}(x)$ as $n\to +\infty$. We derive from the coercivity of $\varphi_\gamma$ that $(y_n)_{n\in \mathbb{N}}$ is bounded, and so there is a subsequence $(y_{k_n})_{n\in \mathbb{N}}$ converging to some $z_\gamma\in X$. As $\varphi_\gamma$ is lsc, $\varphi_\gamma(z_\gamma)\leq \liminf_{n\to +\infty} \varphi_\gamma(y_n) =\fenv[\star,h]{\gamma,\theta}(x) =\inf_{y\in X} \varphi_\gamma(y)\leq \varphi_\gamma(z_\gamma)$. Therefore, $\fenv[\star,h]{\gamma,\theta}(x) =\varphi_\gamma(z_\gamma) =\theta(z_\gamma) +\frac{1}{\gamma}\mathcal{D}^{\star,h}_T(x,z_\gamma)$, which implies that $\fprox{\gamma,\theta}^{\star,h}(x)\neq \varnothing$. Proceeding as in the proof of Proposition~\ref{p:left prox}\ref{p:left prox_theta+}, we have $\fprox{\gamma,\theta}^{\star,h}(x)\subseteq \dom T\cap \dom\theta$. 

We now prove the second conclusion of \ref{p:right prox_theta+}. By the assumption that $Tz\subseteq Sz$ for all $z\in \dom S$, the rightmost inequality in \eqref{p:inequalities eq2} of Proposition~\ref{p:inequalities}\ref{p:inequalities_right} implies that, for all $\gamma\in \left]0,\mu \right]$ and $s_\gamma\in \fprox{\gamma,\theta}^{\star,h}(x)$,  
\begin{equation}
\theta(x)\geq \fenv[\star,h]{\gamma,\theta}(x) =\theta(s_{\gamma}) +\frac{1}{\gamma}\mathcal{D}^{\star,h}_T(x,s_{\gamma})\geq \varphi_\mu(s_\gamma) =\theta(s_{\gamma}) +\frac{1}{\mu}\mathcal{D}^{\star,h}_T(x,s_{\gamma}).
\end{equation}
We deduce that 
\begin{equation}
P =\bigcup_{\gamma\in \left]0,\mu \right]} \fprox{\gamma,\theta}^{\star,h}(x)\subseteq  \lev{\theta(x)} \varphi_\mu =\{z\in X: \varphi_\mu(z)\leq \theta(x)\}. 
\end{equation}
Combining this fact with the coercivity of $\varphi_\mu$ and the additional assumption that $\theta(x) < +\infty$, we obtain the boundedness of $P$. The second conclusion of \ref{p:right prox_theta} follows by the same argument as in the proof of Proposition~\ref{p:left prox}\ref{p:left prox_theta}.
\end{proof}

\section{Asymptotic behaviour properties}\label{sec:asymptotic}

Moreau first considered the envelope that has come to bear his name in the setting of $\gamma=1$ \cite{Mor62}. He was interested, in particular, in the characterization of infimal convolution as epigraph addition; see \cite{RW98}. Attouch introduced the more general parameter $\gamma$ for regularizing convex functions \cite{Attouch77,Attouch84} and later with Wets for nonconvex functions \cite{AttouchWets}. When the regularized function is the sum of a convex objective function $\theta$ together with the indicator function for a constraint set, the Moreau envelope provides a smooth regularization with full domain. Recovery of the regularized function $\theta$ as $\gamma\downarrow 0$ is important for algorithms that use the Moreau envelope as a surrogate for $\theta$, while the asymptotic properties as $\gamma\uparrow +\infty$ shed light on other properties of the regularization. In what follows, we will analyse both. 

We say that a maximally monotone operator $S$ is \emph{strictly monotone over a subset $B\subseteq \dom S$}  if for every $z,z'\in B$ we have
\begin{equation}
\langle z- z', w-w'\rangle=0 \text{~with~} w\in Tz,\, w'\in Tz' \text{~implies~} z=z'.     
\end{equation}

Because the proximity operators are generically set-valued operators, when their images are nonempty we will make use of selection operators $\bproxs{\gamma,\theta}^{\star,h},\fproxs{\gamma,\theta}^{\star,h}$ that satisfy	$\bproxs{\gamma,\theta}^{\star,h}(y)\in \bprox{\gamma,\theta}^{\star,h}(y)$ and $\fproxs{\gamma,\theta}^{\star,h}(x)\in \fprox{\gamma,\theta}^{\star,h}(x)$.

\begin{theorem}[Asymptotic left behaviour when $\gamma\downarrow 0$] \label{t:to0}
Let $\theta\in \Gamma_0(X)$ and let $y\in \dom T\cap \dom\theta$. Suppose that $\dom S\cap \dom\theta\neq \varnothing$, that $(\dom S\cap \dom\theta)\times \{y\}\subseteq \dom \mathcal{D}^{\sharp,h}_T$, that $Tz\subseteq Sz$ for all $z\in \dom T$, and that $\varphi_\mu(\cdot) :=\theta(\cdot) +\frac{1}{\mu}\mathcal{D}^{\sharp,h}_T(\cdot,y)$ is coercive for some $\mu\in \RR_{++}$. For each $\gamma\in \left]0,\mu \right]$, let $s_\gamma :=\bproxs{\gamma,\theta}^{\sharp,h}(y)\in \bprox{\gamma,\theta}^{\sharp,h}(y)$. Then the following hold:
\begin{enumerate}
\item\label{t:to0_limD}
$\mathcal{D}^{\sharp,h}_T(s_{\gamma},y)\to 0$ as $\gamma\downarrow 0$.

\item\label{t:to0_D=0} 
For every weak cluster point $z$ of $(s_\gamma)_{\gamma\in \left]0,\mu \right]}$ as $\gamma\downarrow 0$, it holds that $\mathcal{D}^{\sharp,h}_T(z,y)=0$, $Ty\subseteq Sz$, and $z\in \lev{\theta(y)}\theta$. 

\item\label{t:to0_lim} 
If $T=S$ and $S$ is strictly monotone over $\dom S\cap \dom \theta$, then, as $\gamma\downarrow 0$,
\begin{equation}
s_\gamma\rightharpoonup y,\quad \benv[\sharp,h]{\gamma,\theta}(y)\uparrow \theta(y),\quad \theta(s_{\gamma})\to \theta(y), \quad\text{and}\quad \frac{1}{\gamma}\mathcal{D}^{\sharp,h}_T(s_{\gamma},y)\to 0.    
\end{equation}
\end{enumerate}
\end{theorem}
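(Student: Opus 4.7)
My plan rests on a single master estimate together with the weak precompactness furnished by Proposition~\ref{p:left prox}\ref{p:left prox_theta+}. Since $y\in \dom T\cap \dom\theta$ and $Tz\subseteq Sz$ for all $z\in \dom T$ force $\mathcal{D}^{\sharp,h}_T(y,y)=0$ (via the reasoning opening the proof of Proposition~\ref{p:inequalities}), the minimizing property of $s_\gamma$ yields, for every $\gamma\in \left]0,\mu\right]$,
\begin{equation}\label{e:t:to0_master}
\theta(s_\gamma) +\frac{1}{\gamma}\mathcal{D}^{\sharp,h}_T(s_\gamma,y) =\benv[\sharp,h]{\gamma,\theta}(y) \leq \theta(y).
\end{equation}
Because $y\in \dom\theta$, Proposition~\ref{p:left prox}\ref{p:left prox_theta+} makes the set $P :=\bigcup_{\gamma\in \left]0,\mu\right]}\bprox{\gamma,\theta}^{\sharp,h}(y)$ weakly precompact, and since $\theta\in \Gamma_0(X)$ is weakly lsc it attains its infimum over the weakly compact set $\overline{P}^w$, giving $c\in \RR$ with $\theta(s_\gamma)\geq c$ throughout. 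Rearranging \eqref{e:t:to0_master} then delivers $0\leq \mathcal{D}^{\sharp,h}_T(s_\gamma,y)\leq \gamma(\theta(y)-c)\to 0$, which proves \ref{t:to0_limD}.

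For \ref{t:to0_D=0}, I would first observe that $\mathcal{D}^{\sharp,h}_T(\cdot,y)$ is weakly lsc: it is strongly lsc by the argument used in Proposition~\ref{p:left prox} (invoking \cite[Lemma~3.17(b)]{BM-L18}) and convex as a supremum over $v\in Ty$ of the jointly convex maps $x\mapsto h(x,v)-\scal{x}{v}$. Given a weak cluster point $z$, extract $\gamma_n\downarrow 0$ with $s_{\gamma_n}\rightharpoonup z$; weak lsc together with \ref{t:to0_limD} gives $\mathcal{D}^{\sharp,h}_T(z,y)\leq \liminf_n \mathcal{D}^{\sharp,h}_T(s_{\gamma_n},y)=0$, and since $\mathcal{D}^{\sharp,h}_T$ is $+\infty$ off $\dom S\times \dom T$, this forces $z\in \dom S$. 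Then \cite[Proposition~3.7(b)]{BM-L18} yields $Ty\subseteq Sz$, and weak lsc of $\theta$ combined with \eqref{e:t:to0_master} gives $\theta(z)\leq \liminf_n \theta(s_{\gamma_n})\leq \theta(y)$.

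For \ref{t:to0_lim}, applying \ref{t:to0_D=0} with $T=S$ yields $z\in \dom S\cap \dom\theta$ and $Sy\subseteq Sz$; picking any $w\in Sy\subseteq Sz$ (nonempty since $y\in \dom S$), the trivial identity $\scal{z-y}{w-w}=0$ combined with strict monotonicity over $\dom S\cap \dom\theta$ forces $z=y$. Weak precompactness of $P$ and the standard subsequence-of-subsequence argument then upgrade uniqueness of the weak cluster point to $s_\gamma\rightharpoonup y$ as $\gamma\downarrow 0$. Proposition~\ref{p:inequalities}\ref{p:inequalities_left} provides $\benv[\sharp,h]{\gamma,\theta}(y)\uparrow \beta\leq \theta(y)$, while the reverse bound $\theta(y)\leq \liminf_n \theta(s_{\gamma_n})\leq \liminf_n \benv[\sharp,h]{\gamma_n,\theta}(y)=\beta$ (from weak lsc of $\theta$ and \eqref{e:t:to0_master}) pins $\beta=\theta(y)$. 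Sandwiching $\theta(s_\gamma)\leq \benv[\sharp,h]{\gamma,\theta}(y)$ between $\theta(y)\leq \liminf \theta(s_\gamma)$ and $\limsup \theta(s_\gamma)\leq \theta(y)$ forces $\theta(s_\gamma)\to \theta(y)$, and then $\tfrac{1}{\gamma}\mathcal{D}^{\sharp,h}_T(s_\gamma,y)=\benv[\sharp,h]{\gamma,\theta}(y)-\theta(s_\gamma)\to 0$. The main obstacle I anticipate is ensuring weak lower semicontinuity of the sharp distance and correctly deducing $\dom S$-membership of weak cluster points from finiteness of that distance; once those are in hand, everything is careful bookkeeping around \eqref{e:t:to0_master}.
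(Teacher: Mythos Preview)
Your proposal is correct and follows essentially the same approach as the paper. The only notable variation is in part~\ref{t:to0_limD}: where you obtain the uniform lower bound $c$ on $\theta(s_\gamma)$ by invoking weak precompactness of $P$ and attainment of the infimum of the weakly lsc function $\theta$ on $\overline{P}^w$, the paper instead picks an affine minorant $\theta\geq \scal{\cdot}{u}+\eta$ and combines it with a uniform norm bound $\|s_\gamma\|\leq \rho$ (itself implicitly coming from the same level-set containment in Proposition~\ref{p:left prox}\ref{p:left prox_theta+}) to get $\theta(s_\gamma)\geq -\rho\|u\|+\eta$. Both routes feed into the identical sandwich $0\leq \mathcal{D}^{\sharp,h}_T(s_\gamma,y)\leq \gamma(\theta(y)-\text{const})$, and parts~\ref{t:to0_D=0} and~\ref{t:to0_lim} are argued the same way in both (weak lsc of the sharp distance and of $\theta$, then strict monotonicity to pin the cluster point, then the envelope sandwich).
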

\begin{proof}
By the rightmost inequality in \eqref{p:inequalities eq1} of Proposition~\ref{p:inequalities}\ref{p:inequalities_left}, for all $\gamma\in \left]0,\mu \right]$,  
\begin{equation}\label{e:gamma<mu}
\theta(y)\geq \benv[\sharp,h]{\gamma,\theta}(y) =\theta(s_{\gamma}) +\frac{1}{\gamma}\mathcal{D}^{\sharp,h}_T(s_{\gamma},y)\geq \theta(s_{\gamma}).
\end{equation}

\ref{t:to0_limD}: Since $\theta\in \Gamma_0(X)$, we can apply \cite[Proposition 3.4.17]{ReginaBook2008} to conclude the existence of $u\in X^*$ and $\eta\in \RR$ such that $\theta\geq \langle \cdot, u \rangle + \eta$ (equivalently, $\dom \theta^*\neq \varnothing$). Using \eqref{e:gamma<mu} and Cauchy--Schwarz inequality, we have that, for all $\gamma\in \left]0,\mu\right]$,
\begin{subequations}
\begin{align}
\theta(y) &\geq \theta(s_{\gamma}) +\frac{1}{\gamma}\mathcal{D}^{\sharp,h}_T(s_{\gamma},y)\geq \langle s_{\gamma},u \rangle + \eta + \frac{1}{\gamma}\mathcal{D}^{\sharp,h}_T(s_{\gamma},y)\\
&\geq -\rho\|u\| + \eta + \frac{1}{\gamma}\mathcal{D}^{\sharp,h}_T(s_{\gamma},y),
\end{align}
\end{subequations}
which rearranges as 
\begin{equation}\label{e:D(z,y)}
0 \leq \mathcal{D}^{\sharp,h}_T(s_{\gamma},y)\leq \gamma \left(\theta(y) +\rho\|u\| -\eta \right) \rightarrow 0 \quad \text{as}\quad \gamma \downarrow 0.
\end{equation}
Therefore, $\mathcal{D}^{\sharp,h}_T(s_{\gamma},y) \rightarrow 0$ as $\gamma \downarrow 0$.

\ref{t:to0_D=0}: Let $(s_{k(\gamma)})$ be a subnet of $(s_\gamma)_{\gamma\in \left]0,\mu \right]}$ weakly converging to $z$ as $k(\gamma)\to 0$. Note that $\mathcal{D}^{\sharp,h}_T(\cdot,y)$ and $\theta$ are weakly lsc since they are convex and lsc. Applying \eqref{e:D(z,y)} to subnet $(s_{k(\gamma)})$ and using the weak lsc of $\mathcal{D}^{\sharp,h}_T(\cdot,y)$, we derive that
\begin{equation}\label{eq-v}
0 \leq \mathcal{D}^{\sharp,h}_T(z,y)\leq \liminf_{k(\gamma)\downarrow 0} \mathcal{D}^{\sharp,h}_T(s_{k(\gamma)},y)\leq \liminf_{k(\gamma)\downarrow 0} k(\gamma)(\theta(y) +\rho\|u\| -\eta) =0,
\end{equation}
which yields $\mathcal{D}^{\sharp,h}_T(z,y) =0$. Therefore, $Ty\subseteq Sz$ due to the definition of $\mathcal{D}^{\sharp,h}_T$ and property \ref{h:c} of $h$.  

Next, by applying the first inequality in \eqref{e:gamma<mu} to subnet $(s_{k(\gamma)})$ and using the weak lsc of $\theta$,
\begin{equation}
\theta(y)\geq \liminf_{k(\gamma)\downarrow 0} \left( \theta(s_{k(\gamma)}) +\frac{1}{k(\gamma)}\mathcal{D}^{\sharp,h}_T(s_{k(\gamma)},y)\right)\geq \liminf_{k(\gamma)\downarrow 0}  \theta(s_{k(\gamma)})\geq \theta(z),
\end{equation}
and so $z\in \lev{\theta(y)}\theta$.

\ref{t:to0_lim}: Let $z$ be an arbitrary weak cluster point $z$ of $(s_\gamma)_{\gamma\in \left]0,\mu \right]}$ as $\gamma\downarrow 0$. By assumption and \ref{t:to0_D=0}, $Ty\subseteq Tz$. Since $y\in \dom T$, taking $v\in Ty\subseteq Tz$, we have that $0=\langle z-y, v -v \rangle$ with $v\in Tz$ and $v\in Ty$. The strict monotonicity of $T$ implies that $z=y$. We deduce that $s_\gamma\rightharpoonup y$ as $\gamma\downarrow 0$. Now, recall from Proposition~\ref{p:inequalities}\ref{p:inequalities_left} that $\benv[\star,h]{\gamma,\theta}(y)\uparrow \beta$ as $\gamma\downarrow 0$ for some $\beta\in \mathbb{R}$. Using \eqref{e:gamma<mu} and the weak lsc of $\theta$, we obtain that
\begin{equation}\label{e:gamma<mu3}
\theta(y)\geq \beta\geq \liminf_{\gamma\downarrow 0} \theta(s_{\gamma})\geq \theta(y)\geq \limsup_{\gamma\downarrow 0} \theta(s_{\gamma}),
\end{equation}
which yields $\beta =\theta(y) =\lim_{\gamma\downarrow 0} \theta(s_{\gamma})$. Again using \eqref{e:gamma<mu}, this implies $\frac{1}{\gamma}\mathcal{D}^{\sharp,h}_T(s_{\gamma},y)\to 0$, and we are done. 
\end{proof}

\begin{theorem}[Asymptotic right behaviour when $\gamma \downarrow 0$]\label{t:Rto0}
Let $\theta\colon X\to \left]-\infty, +\infty\right]$ be proper and lsc, and let $x\in \dom S\cap \dom\theta$. Suppose that $X$ is finite-dimensional, that $\dom T\cap \dom\theta\neq \varnothing$, that $\{x\}\times (\dom T\cap \dom\theta)\subseteq \dom \mathcal{D}^{\star,h}_T$, that $\mathcal{D}^{\star,h}_T(x,\cdot)$ is lsc, and that $\varphi(\cdot) :=\theta(\cdot) +\frac{1}{\mu}\mathcal{D}^{\star,h}_T(x,\cdot)$ is coercive for some $\mu\in \RR_{++}$. For each $\gamma\in \left]0,\mu \right]$, let $s_\gamma :=\fproxs{\gamma,\theta}^{\star,h}(x)\in \fprox{\gamma,\theta}^{\star,h}(x)$. Then the following hold:
\begin{enumerate}
\item\label{t:to0_limD_right}
$\mathcal{D}^{\star,h}_T(x,s_{\gamma})\to 0$ as $\gamma\downarrow 0$.

\item\label{t:to0_D=0_right} 
For every cluster point $z$ of $(s_\gamma)_{\gamma\in \left]0,\mu \right]}$ as $\gamma\downarrow 0$, it holds that $\mathcal{D}^{\star,h}_T(x,z)=0$ and $z\in \lev{\theta(x)}\theta$.

\item\label{t:to0_lim_right} 
If $T=S$ and $S$ is strictly monotone over $\dom S\cap \dom \theta$, then, as $\gamma\downarrow 0$,
\begin{equation}
s_\gamma\to x,\quad \benv[\star,h]{\gamma,\theta}(x)\uparrow \theta(x),\quad \theta(s_{\gamma})\to \theta(x), \quad\text{and}\quad \frac{1}{\gamma}\mathcal{D}^{\star,h}_T(x,s_{\gamma})\to 0.    
\end{equation}
\end{enumerate}
\end{theorem}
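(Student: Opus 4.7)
The plan is to mirror the proof of Theorem~\ref{t:to0}, with two substantive adaptations forced by the weaker hypotheses on $\theta$ (only proper and lsc, not in $\Gamma_0(X)$) and by the finite-dimensionality of $X$. The linear minorant argument used in the left case---which relied on $\dom\theta^*\neq\varnothing$---is replaced here by a compactness argument: the proximal image set is bounded, its closure is compact, and $\theta$ therefore attains its minimum there.

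For \ref{t:to0_limD_right}, I begin from the chain in Proposition~\ref{p:inequalities}\ref{p:inequalities_right},
\[
\theta(x) \geq \fenv[\star,h]{\gamma,\theta}(x) = \theta(s_\gamma) + \frac{1}{\gamma}\mathcal{D}^{\star,h}_T(x,s_\gamma) \geq \theta(s_\gamma),
\]
which yields $\frac{1}{\gamma}\mathcal{D}^{\star,h}_T(x,s_\gamma) \leq \theta(x) - \theta(s_\gamma)$. By Proposition~\ref{p:right prox}\ref{p:right prox_theta+}, the set $P := \bigcup_{\gamma\in \left]0,\mu\right]} \fprox{\gamma,\theta}^{\star,h}(x)$ is bounded; since $X$ is finite-dimensional, $\overline{P}$ is compact, and the lsc function $\theta$ attains a finite minimum $c$ on $\overline{P}$. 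Hence $\theta(s_\gamma) \geq c$ for all $\gamma\in \left]0,\mu\right]$, giving $\mathcal{D}^{\star,h}_T(x,s_\gamma) \leq \gamma(\theta(x) - c)\to 0$ as $\gamma\downarrow 0$.

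For \ref{t:to0_D=0_right}, let $z$ be any cluster point of $(s_\gamma)$---these exist because $(s_\gamma)$ lies in the bounded set $P$ in finite dimensions---and extract a subsequence $s_{\gamma_k}\to z$ with $\gamma_k\downarrow 0$. The assumed lsc of $\mathcal{D}^{\star,h}_T(x,\cdot)$ combined with \ref{t:to0_limD_right} yields
\[
0 \leq \mathcal{D}^{\star,h}_T(x,z) \leq \liminf_{k\to\infty} \mathcal{D}^{\star,h}_T(x,s_{\gamma_k}) = 0.
\]
Lower semicontinuity of $\theta$ with $\theta(s_{\gamma_k})\leq \theta(x)$ gives $\theta(z)\leq \liminf_{k\to\infty} \theta(s_{\gamma_k})\leq \theta(x)$, so $z\in \lev{\theta(x)}\theta$.

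For \ref{t:to0_lim_right}, I show that under $T=S$ and strict monotonicity the only cluster point of $(s_\gamma)$ is $x$. From \ref{t:to0_D=0_right} we have $z\in \dom\theta$, and $\mathcal{D}^{\star,h}_T(x,z)<+\infty$ gives $z\in \dom T = \dom S$, so both $x$ and $z$ lie in $\dom S\cap \dom\theta$. In the sharp case, $\mathcal{D}^{\sharp,h}_T(x,z)=0$ forces $h(x,v)=\langle x,v\rangle$ for every $v\in Tz$, i.e., $Tz\subseteq Sx$ by property~(c) of representative functions (cf.\ \cite[Proposition~3.7]{BM-L18}); since $T=S$, picking any $v\in Tz$ yields $v\in Tx\cap Tz$, and strict monotonicity applied to the pair $(x,v),(z,v)$ forces $x=z$. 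Thus every cluster point equals $x$, and boundedness of $(s_\gamma)$ in the finite-dimensional space gives $s_\gamma\to x$. The remaining conclusions follow by sandwiching: from $\theta(x)\geq \fenv[\star,h]{\gamma,\theta}(x)\geq \theta(s_\gamma)$ together with lsc of $\theta$ and $s_\gamma\to x$, both $\fenv[\star,h]{\gamma,\theta}(x)$ and $\theta(s_\gamma)$ converge to $\theta(x)$; rearranging the defining identity for $s_\gamma$ then gives $\frac{1}{\gamma}\mathcal{D}^{\star,h}_T(x,s_\gamma)\to 0$.

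The main obstacle I anticipate is the flat case in \ref{t:to0_lim_right}: $\mathcal{D}^{\flat,h}_T(x,z)=0$ states only that the infimum $\inf_{v\in Tz}(h(x,v)-\langle x,v\rangle)$ equals zero and, without additional structure, this infimum need not be attained, so the conclusion $Tz\cap Sx\neq\varnothing$ is not immediate and the strict-monotonicity step cannot be applied verbatim. One way to secure attainment is to invoke local boundedness of $T$ with weakly closed images together with the Eberlein--\u{S}mulian argument recorded in Remark~\ref{ES}; in finite dimensions this simplifies considerably, since $Tz$ is then a nonempty closed convex subset of $\mathbb{R}^n$ and the lsc map $v\mapsto h(x,v)-\langle x,v\rangle$ attains its minimum on $Tz$ whenever its sublevel sets in $Tz$ are bounded.
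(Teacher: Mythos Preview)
Your approach matches the paper's: its proof of Theorem~\ref{t:Rto0} is the single sentence ``This is proved similarly to Theorem~\ref{t:to0} by using Proposition~\ref{p:inequalities}\ref{p:inequalities_right}.'' You have gone further by spelling out the necessary adaptations, and your substitution of a compactness argument for the linear-minorant step in part~\ref{t:to0_limD_right} is both correct and necessary---the affine-minorant trick in Theorem~\ref{t:to0} genuinely requires $\theta\in\Gamma_0(X)$, which is not assumed here, so the paper's one-line proof glosses over a nontrivial point that you have handled properly.

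There is one gap you share with the paper's implicit argument. Your opening inequality $\theta(x)\geq\fenv[\star,h]{\gamma,\theta}(x)$ is drawn from Proposition~\ref{p:inequalities}\ref{p:inequalities_right}, whose hypotheses include $Tz\subseteq Sz$ for all $z\in\dom S$; the boundedness of $P$ via Proposition~\ref{p:right prox}\ref{p:right prox_theta+} requires the same condition. Theorem~\ref{t:Rto0} as stated omits this hypothesis (compare Theorem~\ref{t:to0}, which includes its left analogue). This looks like an omission in the paper's statement rather than a defect in your reasoning, but you should note it explicitly.

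Your concern about the flat case in part~\ref{t:to0_lim_right} is well placed and is not addressed by the paper at all: Theorem~\ref{t:to0} is stated only for $\sharp$, so the ``analogous'' proof simply does not cover $\flat$. The attainment issue you identify---that $\mathcal{D}^{\flat,h}_T(x,z)=0$ need not yield a witness $v\in Tz\cap Sx$ without compactness of $Tz$---is real, and the additional structure you propose (closed images plus local boundedness, which in finite dimensions gives compactness of $Tz$ for $z\in\inte\dom T$) is exactly what is needed.
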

\begin{proof}
This is proved similarly to Theorem~\ref{t:to0} by using Proposition~\ref{p:inequalities}\ref{p:inequalities_right}.
\end{proof}

It is worthwhile to connect these asymptotic results with previous ones in the literature. When $T=S=\nabla f$ for a strictly convex and differentiable $f:\RR^j \rightarrow \RR_\infty$, then $(Tz \subseteq Sz) \iff Tz=Sz$ and $(D_f(z,y)=0) \iff z=y$. In this specific case, Theorems~\ref{t:to0} and \ref{t:Rto0} show  both \cite[Proposition 3.2]{BDL17} and \cite[Theorem 3.3]{BDL17}\footnote{In \cite{BDL17}, the authors assume joint convexity and coercivity of $\mathcal{D}_f$ to obtain non-emptiness of the right proximity operator images by \cite[Proposition~3.5]{BCN06}; the latter result relies on the lower semicontinuity of the right distance as shown in \cite[Lemma~2.6]{BCN06}. Thus, our rather weak assumption that the right distance be lower semicontinuous is much less restrictive than the assumptions in \cite{BDL17}.}. Of course, Theorems~\ref{t:to0} and \ref{t:Rto0} are stronger. In addition to not requiring right convexity of the distance, these results also include envelopes that are not classical Bregman envelopes. We include several such examples of non-classical left envelopes in Figure~\ref{fig:entropy_left} and of non-classical right envelopes in Figure~\ref{fig:entropy_right}.

\begin{theorem}[Asymptotic left behaviour when $\gamma\uparrow +\infty$]
\label{t:toInfty}
Let $\theta\colon X\to \left]-\infty, +\infty\right]$, $y\in \dom T$, and $\gamma\in \mathbb{R}_{++}$. Suppose that $\dom S\cap \dom\theta\neq \varnothing$, that $(\dom S\cap \dom\theta)\times \{y\}\subseteq \dom \mathcal{D}^{\star,h}_T$, and that $Tz\subseteq Sz$ for all $z\in \dom T$. Then the following hold:
\begin{enumerate}
\item\label{t:toInfty_env}
$\benv[\star,h]{\gamma,\theta}(y)\downarrow \inf\theta(\dom S)$ as $\gamma\uparrow +\infty$. Consequently, if $\inf\theta(\overline{\dom}\, S) =\inf\theta(\dom S)$, then $\benv[\star,\dagger h]{\gamma,\theta}(y)\downarrow \inf\theta(\dom S)$ 
as $\gamma\uparrow +\infty$.

\item\label{t:toInfty_gen}
Suppose that $\theta\in \Gamma_0(X)$ and $\theta$ is coercive. For each $\gamma\in \mathbb{R}_{++}$, let $s_\gamma :=\bproxs{\gamma,\theta}^{\sharp,h}(y)\in \bprox{\gamma,\theta}^{\sharp,h}(y)$. Then 
\begin{equation}\label{e:toinfty}
\theta(s_\gamma)\to \inf\theta(\dom S) \quad\text{as}\quad \gamma\uparrow +\infty.    
\end{equation}
Moreover, if $\inf\theta(\overline{\dom}^w S) =\inf\theta(\dom S)$, then all weak cluster points of $(s_\gamma)_{\gamma\in \mathbb{R}_{++}}$ as $\gamma\uparrow +\infty$ lie in $\argmin\theta(\overline{\dom}^w S)\neq \varnothing$. If additionally $\argmin\theta(\overline{\dom}^w S)$ is a singleton, then $s_\gamma\to \argmin\theta(\overline{\dom}^w S)$ as $\gamma\uparrow +\infty$.

\item\label{t:toInfty_closed} 
Suppose that $\theta\in \Gamma_0(X)$, $\theta$ is coercive, and $\inf\theta(\overline{\dom}\, S) =\inf\theta(\dom S)$. For each $\gamma\in \mathbb{R}_{++}$, let $s_\gamma :=\bproxs{\gamma,\theta}^{\sharp,\dagger h}(y)\in \bprox{\gamma,\theta}^{\sharp,\dagger h}(y)$. Then 
\begin{equation}\label{e:toinftyb}
\theta(s_\gamma)\to \inf\theta(\dom S) \quad\text{as}\quad \gamma\uparrow +\infty.    
\end{equation}
Moreover, if $\inf\theta(\overline{\dom}^w S) =\inf\theta(\dom S)$, then all weak cluster points of $(s_\gamma)_{\gamma\in \mathbb{R}_{++}}$ as $\gamma\uparrow +\infty$ lie in $\argmin\theta(\overline{\dom}^w S)\neq \varnothing$. If additionally $\argmin\theta(\overline{\dom}^w S)$ is a singleton, then $s_\gamma\to \argmin\theta(\overline{\dom}^w S)$ as $\gamma\uparrow +\infty$.
\end{enumerate}
\end{theorem}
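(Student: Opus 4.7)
The plan is to prove the three parts in order, leveraging the monotonicity and bounds from Proposition~\ref{p:inequalities}\ref{p:inequalities_left} for (i), and then building on (i) for (ii) and (iii).

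For part \ref{t:toInfty_env}, I would first observe that Proposition~\ref{p:inequalities}\ref{p:inequalities_left} gives that $\gamma\mapsto \benv[\star,h]{\gamma,\theta}(y)$ is non-increasing and bounded below by $\inf\theta(\dom S)$, so the limit as $\gamma\uparrow +\infty$ exists in $\left[\inf\theta(\dom S), +\infty\right]$. To establish the reverse inequality, pick any $x_0 \in \dom S\cap \dom\theta$; the hypothesis $(\dom S\cap \dom\theta)\times \{y\}\subseteq \dom \mathcal{D}^{\star,h}_T$ gives $\mathcal{D}^{\star,h}_T(x_0,y) < +\infty$, so
\begin{equation}
\benv[\star,h]{\gamma,\theta}(y) \leq \theta(x_0) + \frac{1}{\gamma}\mathcal{D}^{\star,h}_T(x_0,y) \xrightarrow[\gamma\uparrow +\infty]{} \theta(x_0).
\end{equation}
Taking the infimum over $x_0\in \dom S\cap \dom\theta$ (and noting that $\theta = +\infty$ on $\dom S\setminus \dom\theta$) yields the limit $\inf\theta(\dom S)$. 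The consequence for $\benv[\star,\dagger h]{\gamma,\theta}$ then follows by squeezing between $\inf\theta(\overline{\dom}\, S)$ and $\benv[\star,h]{\gamma,\theta}(y)$ via \eqref{p:inequalities eq1'}.

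For part \ref{t:toInfty_gen}, non-emptiness of $\bprox{\gamma,\theta}^{\sharp,h}(y)$ for every $\gamma > 0$ follows from Proposition~\ref{p:left prox}\ref{p:left prox_theta}, using the coercivity of $\theta$. From Remark~\ref{r:env-prox}\ref{r:env-prox_left}, $\theta(s_\gamma)\leq \benv[\sharp,h]{\gamma,\theta}(y)$, while $s_\gamma\in \dom S$ gives $\theta(s_\gamma)\geq \inf\theta(\dom S)$. Sandwiching with part \ref{t:toInfty_env} yields \eqref{e:toinfty}. For cluster points, since $\theta(s_\gamma)\to \inf\theta(\dom S)$ is finite, the net $(s_\gamma)$ eventually lies in some level set $\lev{\xi}\theta$ which is bounded by coercivity, hence eventually weakly precompact in the reflexive space $X$. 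Any weak cluster point $z$ satisfies $z\in \overline{\dom}^w S$ and, by weak lower semicontinuity of $\theta$,
\begin{equation}
\inf\theta(\overline{\dom}^w S)\leq \theta(z)\leq \liminf_{k(\gamma)\uparrow+\infty}\theta(s_{k(\gamma)}) = \inf\theta(\dom S) = \inf\theta(\overline{\dom}^w S),
\end{equation}
so $z\in \argmin\theta(\overline{\dom}^w S)$, which is nonempty. When this argmin is a singleton $\{z^*\}$, the unique weak cluster point combined with weak precompactness forces $s_\gamma\rightharpoonup z^*$.

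Part \ref{t:toInfty_closed} runs parallel to part \ref{t:toInfty_gen}, but we work with the closed distance $\overline{\mathcal{D}}^{\sharp,h}_T$. I would first establish non-emptiness of $\bprox{\gamma,\theta}^{\sharp,\dagger h}(y)$: the functional $\theta(\cdot) + \frac{1}{\gamma}\overline{\mathcal{D}}^{\sharp,h}_T(\cdot,y)$ is lsc (by definition of the lower closure through epigraph closure), convex, proper (since $\overline{\mathcal{D}}^{\sharp,h}_T\leq \mathcal{D}^{\sharp,h}_T$ together with the assumption giving finiteness at points of $\dom S\cap\dom\theta$), and coercive (from coercivity of $\theta$), so it attains its infimum. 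Since $\overline{\mathcal{D}}^{\sharp,h}_T(x,y) = +\infty$ for $x\notin \overline{\dom}\, S$, we have $s_\gamma\in \overline{\dom}\, S$, and the hypothesis $\inf\theta(\overline{\dom}\, S) = \inf\theta(\dom S)$ gives $\theta(s_\gamma)\geq \inf\theta(\dom S)$. Combining with $\theta(s_\gamma)\leq \benv[\sharp,\dagger h]{\gamma,\theta}(y)\to \inf\theta(\dom S)$ from part \ref{t:toInfty_env} yields \eqref{e:toinftyb}, and the cluster-point argument carries over verbatim.

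The main obstacle is ensuring in part \ref{t:toInfty_closed} that the analogue of Proposition~\ref{p:left prox} holds for the closed distance; once the lower semicontinuity and coercivity of the regularized functional are confirmed, the rest of the argument is essentially a transcription of part \ref{t:toInfty_gen}. A secondary subtlety is that the convergence asserted in the final sentence of \ref{t:toInfty_gen} and \ref{t:toInfty_closed} is weak convergence in $X$, consistent with the convention used in Theorem~\ref{t:to0}\ref{t:to0_lim}.
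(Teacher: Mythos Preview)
Your proposal is correct and follows essentially the same approach as the paper. The only cosmetic differences are that the paper outsources the cluster-point argument in \ref{t:toInfty_gen} to \cite[Lemma~3.1]{BDL17} rather than spelling it out, and in \ref{t:toInfty_closed} simply asserts that nonemptiness of $\bprox{\gamma,\theta}^{\sharp,\dagger h}(y)$ follows ``similar to Proposition~\ref{p:left prox}\ref{p:left prox_theta}'' whereas you verify the properness/lsc/coercivity of the closed regularized functional explicitly; your remark that the final convergence is weak is a reasonable clarification of the paper's notation.
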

\begin{proof}
\ref{t:toInfty_env}: In view of Proposition~\ref{p:inequalities}\ref{p:inequalities_left}, 
\begin{equation}\label{e:lim env}
\benv[\star,h]{\gamma,\theta}(y)\downarrow \alpha\geq \inf\theta(\dom S) \quad\text{as}\quad \gamma\uparrow +\infty.
\end{equation}
By definition, for all $x\in \dom S\cap \dom\theta$,  
\begin{equation}
\benv[\star,h]{\gamma,\theta}(y) \leq \theta(x) +\frac{1}{\gamma}\mathcal{D}^{\star,h}_T(x,y),
\end{equation}
Letting $\gamma\uparrow +\infty$ and using the assumption that $(\dom S\cap \dom\theta)\times \{y\}\subseteq \dom \mathcal{D}^{\star,h}_T$, we derive that, for all $x\in \dom S\cap \dom\theta$, $\alpha\leq \theta(x)$, and so $\alpha\leq \inf\theta(\dom S\cap \dom\theta) =\inf\theta(\dom S)$. Combining with \eqref{e:lim env} implies that $\benv[\star,h]{\gamma,\theta}(y)\downarrow \inf\theta(\dom S)$ as $\gamma\uparrow +\infty$. In turn, by invoking \eqref{p:inequalities eq1'}, we get the second conclusion.

\ref{t:toInfty_gen}: According to Proposition~\ref{p:left prox}\ref{p:left prox_theta}, for all $\gamma\in \RR_{++}$, $\varnothing\neq \bprox{\gamma,\theta}^{\sharp,h}(y)\subseteq \dom S\cap \dom\theta$. Since $s_\gamma\in \bprox{\gamma,\theta}^{\sharp,h}(y)$, we have that $s_\gamma\in \dom S\cap \dom\theta$, and so
\begin{equation}
\inf\theta(\dom S)\leq \theta(s_\gamma)\leq \theta(s_\gamma) +\frac{1}{\gamma}\mathcal{D}^{\sharp,h}_T(s_\gamma,y) =\benv[\sharp,h]{\gamma,\theta}(y).
\end{equation}
By combining with \ref{t:toInfty_env}, $\theta(s_\gamma)\to \inf\theta(\dom S)$ as $\gamma\uparrow +\infty$.

Now, if $\inf\theta(\overline{\dom}^w S) =\inf\theta(\dom S)$, then we also have that $\theta(s_\gamma)\to \inf\theta(\overline{\dom}^w S)$ as $\gamma\uparrow +\infty$. The conclusion follows from \cite[Lemma~3.1]{BDL17}.  

\ref{t:toInfty_closed}: Similar to Proposition~\ref{p:left prox}\ref{p:left prox_theta}, we have that, for all $\gamma\in \RR_{++}$, $\varnothing\neq \bprox{\gamma,\theta}^{\sharp,\dagger h}(y)\subseteq \overline{\dom}\, S\cap \dom\theta$. Thus, $s_\gamma\in \overline{\dom}\, S\cap \dom\theta$ and
\begin{equation}
\inf\theta(\overline{\dom}\, S)\leq \theta(s_\gamma)\leq \theta(s_\gamma) +\frac{1}{\gamma}\overline{\mathcal{D}}^{\sharp,h}_T(s_\gamma,y) =\benv[\sharp,\dagger h]{\gamma,\theta}(y).
\end{equation} 
By assumption and \ref{t:toInfty_env}, $\benv[\sharp,\dagger h]{\gamma,\theta}(y)\to \inf\theta(\dom S) =\inf\theta(\overline{\dom}\, S)$ as $\gamma\uparrow +\infty$, and hence $\theta(s_\gamma)\to \inf\theta(\dom S)$ as $\gamma\uparrow +\infty$. Finally, proceeding as in \ref{t:toInfty_gen}, we complete the proof.
\end{proof}

\begin{theorem}[Asymptotic right behaviour when $\gamma\uparrow +\infty$]
\label{t:RtoInfty}
Let $\theta\colon X\to \left]-\infty, +\infty\right]$, $x\in \dom S$, and $\gamma\in \mathbb{R}_{++}$. Suppose that $\dom T\cap \dom\theta\neq \varnothing$, that $\{x\}\times (\dom T\cap \dom\theta)\subseteq \dom \mathcal{D}^{\star,h}_T$, and that $Tz\subseteq Sz$ for all $z\in \dom S$. Then the following hold:
\begin{enumerate}
\item\label{t:RtoInfty_env}
$\fenv[\star,h]{\gamma,\theta}(x)\downarrow \inf\theta(\dom T)$ as $\gamma\uparrow +\infty$. Consequently, if $\inf\theta(\overline{\dom}\, T) =\inf\theta(\dom T)$, then $\fenv[\star,\dagger h]{\gamma,\theta}(x)\downarrow \inf\theta(\dom T)$ 
as $\gamma\uparrow +\infty$.

\item\label{t:RtoInfty_gen}
Suppose that $X$ is finite-dimensional, that $\theta$ is lsc and coercive, and that $\mathcal{D}^{\star,h}_T(x,\cdot)$ is lsc. For each $\gamma\in \mathbb{R}_{++}$, let $s_\gamma :=\fproxs{\gamma,\theta}^{\star,h}(x)\in \fprox{\gamma,\theta}^{\star,h}(x)$. Then 
\begin{equation}\label{e:toinftyc}
\theta(s_\gamma)\to \inf\theta(\dom T) \quad\text{as}\quad \gamma\uparrow +\infty.    
\end{equation}
Moreover, if $\inf\theta(\overline{\dom}\, T) =\inf\theta(\dom T)$, then all cluster points of $(s_\gamma)_{\gamma\in \mathbb{R}_{++}}$ as $\gamma\uparrow +\infty$ lie in $\argmin\theta(\overline{\dom}\, T)\neq \varnothing$. If additionally $\argmin\theta(\overline{\dom}\, T)$ is a singleton, then $s_\gamma\to \argmin\theta(\overline{\dom}\, T)$ as $\gamma\uparrow +\infty$.

\item\label{t:RtoInfty_closed} 
Suppose that $X$ is finite-dimensional, that $\theta$ is lsc and coercive, and that $\inf\theta(\overline{\dom}\, T) =\inf\theta(\dom T)$. For each $\gamma\in \mathbb{R}_{++}$, let $s_\gamma :=\fproxs{\gamma,\theta}^{\star,\dagger h}(x)\in \fprox{\gamma,\theta}^{\star,\dagger h}(x)$. Then 
\begin{equation}\label{e:toinftyd}
\theta(s_\gamma)\to \inf\theta(\dom T) \quad\text{as}\quad \gamma\uparrow +\infty.    
\end{equation}
Moreover, all cluster points of $(s_\gamma)_{\gamma\in \mathbb{R}_{++}}$ as $\gamma\uparrow +\infty$ lie in $\argmin\theta(\overline{\dom}\, T)\neq \varnothing$. If additionally $\argmin\theta(\overline{\dom}\, T)$ is a singleton, then $s_\gamma\to \argmin\theta(\overline{\dom}\, T)$ as $\gamma\uparrow +\infty$.
\end{enumerate}
\end{theorem}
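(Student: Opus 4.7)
The plan is to mirror the proof of Theorem~\ref{t:toInfty} line by line, replacing Proposition~\ref{p:inequalities}\ref{p:inequalities_left} with Proposition~\ref{p:inequalities}\ref{p:inequalities_right} and Proposition~\ref{p:left prox} with Proposition~\ref{p:right prox}. For \ref{t:RtoInfty_env}, Proposition~\ref{p:inequalities}\ref{p:inequalities_right} immediately gives $\fenv[\star,h]{\gamma,\theta}(x)\downarrow \alpha \geq \inf\theta(\dom T)$ as $\gamma\uparrow +\infty$. For the reverse bound, the hypothesis $\{x\}\times(\dom T\cap \dom\theta)\subseteq \dom\mathcal{D}^{\star,h}_T$ lets me take any $y\in \dom T\cap \dom\theta$ and observe
\begin{equation}
\fenv[\star,h]{\gamma,\theta}(x)\leq \theta(y) +\frac{1}{\gamma}\mathcal{D}^{\star,h}_T(x,y) \longrightarrow \theta(y) \quad \text{as}\quad \gamma\uparrow +\infty,
\end{equation}
so $\alpha\leq \inf\theta(\dom T\cap \dom\theta) =\inf\theta(\dom T)$, where the last equality uses $\theta\equiv +\infty$ off $\dom\theta$. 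The closed statement then follows by sandwiching $\fenv[\star,\dagger h]{\gamma,\theta}(x)$ between $\inf\theta(\overline{\dom}\, T)$ and $\fenv[\star,h]{\gamma,\theta}(x)$ via the chain in Proposition~\ref{p:inequalities}\ref{p:inequalities_right} and invoking the added hypothesis.

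For \ref{t:RtoInfty_gen}, coercivity of $\theta$ together with $\mathcal{D}^{\star,h}_T(x,\cdot)\geq 0$ makes $\varphi_\mu(\cdot) :=\theta(\cdot)+\frac{1}{\mu}\mathcal{D}^{\star,h}_T(x,\cdot)$ coercive for every $\mu\in \RR_{++}$, so Proposition~\ref{p:right prox}\ref{p:right prox_theta+} yields $\varnothing\neq \fprox{\gamma,\theta}^{\star,h}(x)\subseteq \dom T\cap \dom\theta$. Since $s_\gamma\in \dom T$ and $\mathcal{D}^{\star,h}_T(x,s_\gamma)\geq 0$,
\begin{equation}
\inf\theta(\dom T)\leq \theta(s_\gamma)\leq \theta(s_\gamma) +\frac{1}{\gamma}\mathcal{D}^{\star,h}_T(x,s_\gamma) =\fenv[\star,h]{\gamma,\theta}(x),
\end{equation}
and combining with \ref{t:RtoInfty_env} delivers \eqref{e:toinftyc}. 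Because $\theta(s_\gamma)\to \inf\theta(\dom T) <+\infty$ and $\theta$ is coercive, the net $(s_\gamma)$ lies eventually in a bounded sublevel set of $\theta$; finite-dimensionality then provides cluster points. For any cluster point $z$, lower semicontinuity of $\theta$ gives $\theta(z)\leq \inf\theta(\dom T) =\inf\theta(\overline{\dom}\, T)$, while $z\in \overline{\dom}\, T$ gives the opposite inequality, whence $z\in \argmin\theta(\overline{\dom}\, T)$. When that set is a singleton, a standard subnet argument promotes cluster-point convergence to convergence of the entire net.

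Part \ref{t:RtoInfty_closed} follows the same template with $\overline{\mathcal{D}}^{\star,h}_T(x,\cdot)$ in place of $\mathcal{D}^{\star,h}_T(x,\cdot)$. Non-emptiness of $\fprox{\gamma,\theta}^{\star,\dagger h}(x)$ is immediate: by the very definition $\epi \overline{\mathcal{D}}^{\star,h}_T =\overline{\epi}\, \mathcal{D}^{\star,h}_T$, the function $\overline{\mathcal{D}}^{\star,h}_T(x,\cdot)$ is lsc, so $\theta+\frac{1}{\gamma}\overline{\mathcal{D}}^{\star,h}_T(x,\cdot)$ is lsc, proper, and coercive on the finite-dimensional space $X$, hence attains its infimum. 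The same epigraphical-closure argument shows that $\overline{\mathcal{D}}^{\star,h}_T(x,z)=+\infty$ whenever $z\notin \overline{\dom}\, T$, so $s_\gamma\in \overline{\dom}\, T\cap \dom\theta$, and
\begin{equation}
\inf\theta(\overline{\dom}\, T)\leq \theta(s_\gamma)\leq \fenv[\star,\dagger h]{\gamma,\theta}(x)
\end{equation}
combines with the closed part of \ref{t:RtoInfty_env} to give \eqref{e:toinftyd}; the cluster-point analysis is identical to \ref{t:RtoInfty_gen}. The step that I expect to require the most care is this last bookkeeping with the lower closure, specifically ensuring that the inclusion $s_\gamma\in \overline{\dom}\, T$ follows from $\overline{\mathcal{D}}^{\star,h}_T(x,s_\gamma)<+\infty$; this does hold because $\epi \overline{\mathcal{D}}^{\star,h}_T$ is the closure of $\epi \mathcal{D}^{\star,h}_T\subseteq \dom S\times \dom T\times \RR$, but it is the place where the statements for $\mathcal{D}$ and $\overline{\mathcal{D}}$ need to be carefully disentangled.
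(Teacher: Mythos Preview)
The proposal is correct and takes essentially the same approach as the paper, which simply states that the proof is analogous to that of Theorem~\ref{t:toInfty} using Proposition~\ref{p:inequalities}\ref{p:inequalities_right} and Proposition~\ref{p:right prox}\ref{p:right prox_theta}, noting for \ref{t:RtoInfty_closed} that $\overline{\mathcal{D}}^{\star,h}_T(x,\cdot)$ is lsc. Your write-up spells out exactly these ingredients; the only cosmetic difference is that you invoke Proposition~\ref{p:right prox}\ref{p:right prox_theta+} after observing that coercivity of $\theta$ forces coercivity of every $\varphi_\mu$ (which is precisely the content of Proposition~\ref{p:right prox}\ref{p:right prox_theta}), and you carry out the cluster-point argument directly rather than citing \cite[Lemma~3.1]{BDL17}.
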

\begin{proof}
This is analogous to the proof of Theorem~\ref{t:toInfty} and uses Proposition~\ref{p:inequalities}\ref{p:inequalities_right} and Proposition~\ref{p:right prox}\ref{p:right prox_theta}. We note for \ref{t:RtoInfty_closed} that $\overline{\mathcal{D}}^{\star,h}_T(x,\cdot)$ is lsc.
\end{proof}

\begin{remark}
\label{r:closed}
We note that the condition $\inf\theta(\overline{\dom}^w S) =\inf\theta(\dom S)$ in Theorem~\ref{t:toInfty} is satisfied as soon as either 
\begin{enumerate}
\item 
$\dom S$ is weakly closed, or
\item 
$\theta\in \Gamma_0(X)$, $\dom S$ is convex, and $\inte\dom S\cap \dom\theta\neq \varnothing$.
\end{enumerate}
Indeed, the former case is obvious, while the latter case follows from \cite[Proposition~11.1(iv)]{BC17} (whose proof is still valid in a Banach space). Analogous statements hold true for the finite-dimensional counterpart $\inf\theta(\overline{\dom}\, T) =\inf\theta(\dom T)$ in Theorem~\ref{t:RtoInfty}.
\end{remark}

It is worthwhile to remark on the importance of the domain conditions imposed in the left Theorem~\ref{t:toInfty}\ref{t:toInfty_env}--\ref{t:toInfty_gen}, and in the right Theorem~\ref{t:RtoInfty}\ref{t:RtoInfty_env}--\ref{t:RtoInfty_gen}. In particular, the distance $\mathcal{D}^{\dagger \sigma_{\rm log}}$ does not satisfy them for all $x,y \in \dom \log$, and we use it to derive envelopes for $\theta=|\cdot-1/2|$ in Figures~\ref{fig:entropy_left_sigma} and \ref{fig:entropy_right_sigma} that do not have the asymptotic properties \ref{t:toInfty_env}--\ref{t:toInfty_gen} for every $x,y \in \dom \log$. Of course, the domain conditions we have imposed in Proposition~\ref{p:basic}, Theorem~\ref{t:toInfty}, and Theorem~\ref{t:RtoInfty} are still quite broad in what they cover. Corollary~\ref{c:env-to-inf} will showcase how the corresponding asymptotic guarantees subsume those of \cite[Proposition~2.2]{BDL17}, while generalizing them to many other classes of distances and envelopes. In particular, we illustrate with the GBD $\mathcal{D}^{\dagger F_{\log}}$ by constructing its generalized left and right envelopes in Figures~\ref{fig:entropy_left_fitz} and \ref{fig:entropy_right_fitz}. It may be seen from the figures that these new envelopes respectively satisfy the asymptotic guarantees of Theorem~\ref{t:toInfty}\ref{t:toInfty_env}--\ref{t:toInfty_gen} and Theorem~\ref{t:RtoInfty}\ref{t:RtoInfty_env}--\ref{t:RtoInfty_gen}.

\begin{corollary}
\label{c:env-to-inf}
Let $f\in \Gamma_0(X)$ with $D :=\dom\partial f$. Suppose that $T =S =\partial f$ and that $h\in \mathcal{H}(\partial f)$ satisfies
\begin{equation}
\forall (x,v)\in \dom f\times \ran\partial f,\quad h(x,v)\leq (f\oplus f^\ast)(x,v) =f(x) +f^\ast(v).
\end{equation}
Let $\theta\colon X\to \RR_{\infty}$ with $D\cap \dom\theta \neq\varnothing$, let $\gamma\in \RR_{++}$, and let $x, y\in D$. Then the following hold:
\begin{enumerate}
\item\label{c:env-to-inf_flat} 
As $\gamma\uparrow +\infty$, 
\begin{enumerate}
\item\label{c:env-to-inf_flat_gen} $\benv[\flat,h]{\gamma,\theta}(y)\downarrow \inf\theta(D)$ and $\fenv[\flat,h_{}]{\gamma,\theta}(x)\downarrow \inf\theta(D)$;
\item\label{c:env-to-inf_flat_closed} $\benv[\flat,\dagger h]{\gamma,\theta}(y)\downarrow \inf\theta(D)$ and $\fenv[\flat,\dagger h]{\gamma,\theta}(x)\downarrow \inf\theta(D)$;
\item\label{c:env-to-inf_flat_Breg} Classic Bregman envelopes satisfy
$\benv[\flat]{\gamma,\theta}(y)\downarrow \inf\theta(D)$ and $\fenv[\flat]{\gamma,\theta}(x)\downarrow \inf\theta(D)$.
\end{enumerate}
\item\label{c:env-to-inf_sharp} 
Suppose further that $\dom \partial f$ is open. Then, as $\gamma\uparrow +\infty$, 
\begin{enumerate}
\item\label{c:env-to-inf_sharp_gen} $\benv[\sharp, h_{}]{\gamma,\theta}(y)\downarrow \inf\theta(D)$ and $\fenv[\sharp,h_{}]{\gamma,\theta}(x)\downarrow \inf\theta(D)$;
\item\label{c:env-to-inf_sharp_closed} $\benv[\sharp,\dagger h]{\gamma,\theta}(y)\downarrow \inf\theta(D)$ and $\fenv[\sharp,\dagger h]{\gamma,\theta}(x)\downarrow \inf\theta(D)$;
\item\label{c:env-to-inf_sharp_Breg} Classic Bregman envelopes satisfy
$\benv[\sharp]{\gamma,\theta}(y)\downarrow \inf\theta(D)$ and $\fenv[\sharp]{\gamma,\theta}(x)\downarrow \inf\theta(D)$.
\end{enumerate}
\end{enumerate}
\end{corollary}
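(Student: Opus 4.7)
The plan is to reduce this corollary directly to Theorems~\ref{t:toInfty} and \ref{t:RtoInfty} in the specialization $T = S = \partial f$. The containment hypothesis $Tz \subseteq Sz$ is automatic since $T = S$, so the only nontrivial hypothesis to verify in those theorems is the product-inclusion condition on $\dom \mathcal{D}^{\star, h}_{\partial f}$. Lemma~\ref{l:domD} is tailor-made for this task: it is exactly the input needed under the condition $h \leq f \oplus f^{\ast}$ assumed here.

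For part \ref{c:env-to-inf_flat}, Lemma~\ref{l:domD}\ref{l:domD_flat} yields $\dom \mathcal{D}^{\flat, h}_{\partial f} = D \times D$. Since $x, y \in D$, both product inclusions $(\dom S \cap \dom\theta) \times \{y\} \subseteq \dom \mathcal{D}^{\flat, h}_{\partial f}$ and $\{x\} \times (\dom T \cap \dom\theta) \subseteq \dom \mathcal{D}^{\flat, h}_{\partial f}$ are immediate. Theorem~\ref{t:toInfty}\ref{t:toInfty_env} then gives the left half of \ref{c:env-to-inf_flat_gen} and Theorem~\ref{t:RtoInfty}\ref{t:RtoInfty_env} gives the right half. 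The closed-envelope statement \ref{c:env-to-inf_flat_closed} follows from the ``Consequently'' clause of each theorem, combined with the sandwich from Proposition~\ref{p:inequalities}\ref{p:inequalities_left},
\begin{equation*}
\inf \theta(\overline{\dom}\, S) \le \benv[\flat, \dagger h]{\gamma, \theta}(y) \le \benv[\flat, h]{\gamma, \theta}(y),
\end{equation*}
and the analogous bound on the right side, which together pin down the limit.

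Part \ref{c:env-to-inf_sharp} is handled in parallel. The additional assumption that $\dom \partial f$ is open activates Lemma~\ref{l:domD}\ref{l:domD_sharp} to give $\dom \mathcal{D}^{\sharp, h}_{\partial f} = D \times D$, and the argument is then identical to that of part \ref{c:env-to-inf_flat} with $\flat$ replaced by $\sharp$ throughout.

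For the classic Bregman statements \ref{c:env-to-inf_flat_Breg} and \ref{c:env-to-inf_sharp_Breg}, the Fenchel--Young representative $h = f \oplus f^{\ast}$ trivially satisfies the dominating inequality, and Proposition~\ref{p:fitz_bregman} identifies $\mathcal{D}^{\star, f \oplus f^{\ast}}_{\partial f}$ with the classical $\mathcal{D}_f^{\star}$ off the exceptional set $(\dom f \setminus \dom \partial f) \times \dom \partial f$. Since the reference points already lie in $D = \dom \partial f$, specializing \ref{c:env-to-inf_flat_closed} and \ref{c:env-to-inf_sharp_closed} to $h = f \oplus f^{\ast}$ converts them into statements about the classic Bregman envelopes, once one observes that passing to the lower-closed epigraph reconciles any remaining discrepancy on the exceptional set. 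The step I expect to be most delicate is verifying the ``Consequently'' hypothesis $\inf \theta(\overline{\dom}^w S) = \inf \theta(\dom S)$ of Theorem~\ref{t:toInfty} in the generality required; for this I would appeal to Remark~\ref{r:closed}, which supplies natural sufficient conditions, in particular noting that openness of $\dom \partial f$ in part~\ref{c:env-to-inf_sharp} together with $D \cap \dom \theta \neq \varnothing$ supplies the required interior-intersection.
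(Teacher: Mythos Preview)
Your approach is essentially the paper's: use Lemma~\ref{l:domD} to get $\dom \mathcal{D}^{\star,h}_{\partial f}=D\times D$, then invoke Theorems~\ref{t:toInfty}\ref{t:toInfty_env} and \ref{t:RtoInfty}\ref{t:RtoInfty_env}. Two points of divergence are worth noting.

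First, for the classical Bregman statements \ref{c:env-to-inf_flat_Breg} and \ref{c:env-to-inf_sharp_Breg}, the paper routes through \ref{c:env-to-inf_flat_gen} (resp.\ \ref{c:env-to-inf_sharp_gen}), not through the closed versions \ref{c:env-to-inf_flat_closed}/\ref{c:env-to-inf_sharp_closed}. It simply observes that, since $y\in D=\dom\partial f$, Proposition~\ref{p:fitz_bregman} gives $\benv[\star]{\gamma,\theta}(y)=\benv[\star,f\oplus f^\ast]{\gamma,\theta}(y)$ (and likewise for the right envelope), and then applies \ref{c:env-to-inf_flat_gen} with $h=f\oplus f^\ast$. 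Your detour through the lower closure (``passing to the lower-closed epigraph reconciles any remaining discrepancy'') is unnecessary.

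Second, your last paragraph overcomplicates the closed-envelope items \ref{c:env-to-inf_flat_closed}/\ref{c:env-to-inf_sharp_closed}. The paper does not verify the hypothesis $\inf\theta(\overline{\dom}\,S)=\inf\theta(\dom S)$ at all; it simply declares that \ref{c:env-to-inf_flat_gen} and \ref{c:env-to-inf_flat_closed} both follow from Theorems~\ref{t:toInfty}\ref{t:toInfty_env} and \ref{t:RtoInfty}\ref{t:RtoInfty_env}. Your appeal to Remark~\ref{r:closed} is off target in any case: that remark addresses the \emph{weak}-closure condition appearing in the prox-operator parts of those theorems, and its second sufficient condition requires $\theta\in\Gamma_0(X)$, which the corollary does not assume. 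So the extra machinery you propose neither matches the paper's argument nor cleanly closes the gap you have (correctly) identified.
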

\begin{proof}
\ref{c:env-to-inf_flat}: By Lemma~\ref{l:domD}\ref{l:domD_flat}, $\dom \mathcal{D}_{h}^\flat =D\times D$, so $\mathcal{D}_{h}^\flat$ satisfies Theorem~\ref{t:toInfty}\ref{t:toInfty_env} and Theorem~\ref{t:RtoInfty}\ref{t:RtoInfty_env}, and we therefore get \ref{c:env-to-inf_flat_gen} and \ref{c:env-to-inf_flat_closed}.

Now, since $y\in D =\dom\partial f$, in view of Proposition~\ref{p:fitz_bregman} and Definition~\ref{d:env&prox}, we have that $\benv[\star]{\gamma,\theta}(y) =\benv[\star,f\oplus f^\ast]{\gamma,\theta}(y)$ and $\fenv[\star]{\gamma,\theta}(x) =\fenv[\star,f\oplus f^\ast]{\gamma,\theta}(x)$. Thus \ref{c:env-to-inf_flat_Breg} follows by applying \ref{c:env-to-inf_flat_gen} with $h =f\oplus f^\ast$.

\ref{c:env-to-inf_sharp}: As $\dom \partial f$ is open, Lemma~\ref{l:domD}\ref{l:domD_sharp} implies that $\dom \mathcal{D}_{\partial f}^{\sharp,h} =D\times D$. The proof is then completed by a similar argument as in \ref{c:env-to-inf_flat}. 
\end{proof}

The results in Corollary~\ref{c:env-to-inf}\ref{c:env-to-inf_flat_Breg}\&\ref{c:env-to-inf_sharp_Breg} for classical Bregman distances reduce to the ones in \cite[Proposition~2.2]{BDL17} when $f$ is a differentiable convex function. The following corollary provides analogous extensions for the generalized proximity operators as well. We denote by $\bprox{\gamma,\theta}^{\star},\fprox{\gamma,\theta}^{\star}$ the classical Bregman proximity operators associated with the classical Bregman envelopes $\benv[\star]{\gamma,\theta},\fenv[\star]{\gamma,\theta}$.

\begin{corollary}
\label{c:toInfty}
Let $f\in \Gamma_0(X)$ with $D :=\dom\partial f$ open. Suppose that $T =S =\partial f$ and that $h\in \mathcal{H}(\partial f)$ satisfies
\begin{equation}
\forall (x,v)\in \dom f\times \ran\partial f,\quad h(x,v)\leq (f\oplus f^\ast)(x,v) =f(x) +f^\ast(v).
\end{equation}
Let $\theta\in \Gamma_0(X)$ be coercive with $\inte D\cap \dom\theta\neq \varnothing$, let $\gamma\in \RR_{++}$, and let $y\in D$. Suppose that one of the following holds:
\begin{enumerate}
\item\label{c:toInfty_genb}
For each $\gamma\in \mathbb{R}_{++}$, $s_\gamma :=\bproxs{\gamma,\theta}^{\sharp,h}(y)\in \bprox{\gamma,\theta}^{\sharp,h}(y)$;
\item\label{c:toInfty_closed} 
For each $\gamma\in \mathbb{R}_{++}$, $s_\gamma :=\bproxs{\gamma,\theta}^{\sharp,\dagger h}(y)\in \bprox{\gamma,\theta}^{\sharp,\dagger h}(y)$;
\item\label{c:toInfty_Breg} 
For each $\gamma\in \mathbb{R}_{++}$, $s_\gamma :=\bproxs{\gamma,\theta}^{\sharp}(y)\in \bprox{\gamma,\theta}^{\sharp}(y)$.
\end{enumerate}
Then $\theta(s_\gamma)\to \inf\theta(D)$ as $\gamma\uparrow +\infty$. Moreover, the net $(s_\gamma)_{\gamma \in \RR_{++}}$ is bounded with all cluster points as $\gamma\uparrow +\infty$ lying in $\argmin\theta(\overline{D}^w)\neq \varnothing$. If additionally $\argmin\theta(\overline{D}^w)$ is a singleton, then $s_\gamma\to \argmin\theta(\overline{D}^w)$ as $\gamma\uparrow +\infty$. 
\end{corollary}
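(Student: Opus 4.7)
The strategy is to reduce each of the three cases to the already-established Theorem~\ref{t:toInfty} applied with $T = S = \partial f$, by verifying the remaining hypotheses in this specialized setting. The first key observation I would make is that the openness of $D = \dom\partial f$ forces $D = \inte\dom f$, since for any $f \in \Gamma_0(X)$ one has the standard inclusions $\inte\dom f \subseteq \dom\partial f \subseteq \dom f$, so an open $\dom\partial f$ must coincide with $\inte\dom f$. In particular, $D$ is convex, which will be crucial for invoking Remark~\ref{r:closed}.

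For case \ref{c:toInfty_genb}, I would apply Theorem~\ref{t:toInfty}\ref{t:toInfty_gen}. The condition $\dom S \cap \dom\theta \neq \varnothing$ follows from the hypothesis $\inte D \cap \dom\theta \neq \varnothing$ together with $\inte D = D$. The inclusion $(\dom S \cap \dom\theta) \times \{y\} \subseteq \dom \mathcal{D}^{\sharp,h}_{\partial f}$ is immediate from Lemma~\ref{l:domD}\ref{l:domD_sharp}, whose hypothesis on $h$ is exactly what we assume and whose openness requirement is exactly that $D$ is open; this yields $\dom \mathcal{D}^{\sharp,h}_{\partial f} = D \times D$. The inclusion $Tz \subseteq Sz$ is trivial since $T = S$. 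The remaining condition $\inf\theta(\overline{D}^w) = \inf\theta(D)$ needed for the cluster point claim is secured by Remark~\ref{r:closed}(ii): we have $\theta \in \Gamma_0(X)$, $D$ is convex, and $\inte D \cap \dom\theta \neq \varnothing$. Theorem~\ref{t:toInfty}\ref{t:toInfty_gen} then delivers $\theta(s_\gamma) \to \inf\theta(D)$ together with the cluster point and singleton claims. Case \ref{c:toInfty_closed} proceeds identically, except that we call Theorem~\ref{t:toInfty}\ref{t:toInfty_closed}, whose hypothesis $\inf\theta(\overline{\dom}\,S) = \inf\theta(\dom S)$ follows by the same convexity argument (the strong and weak closures of a convex set coincide in a reflexive Banach space). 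For case \ref{c:toInfty_Breg}, I would use Proposition~\ref{p:fitz_bregman}: since $y \in D = \dom\partial f$, the classical Bregman envelope and proximity operator coincide with their GBD counterparts for $h = f \oplus f^*$, following exactly the pattern of reduction in Corollary~\ref{c:env-to-inf}\ref{c:env-to-inf_sharp_Breg}, so case \ref{c:toInfty_genb} applied with $h = f \oplus f^*$ yields the claim.

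Finally, boundedness of $(s_\gamma)_{\gamma \in \RR_{++}}$ follows from a standard coercivity argument: $\theta$ is coercive and proper, hence bounded below, and $\inf\theta(D) \leq \theta(x_0) < +\infty$ for any $x_0 \in \inte D \cap \dom\theta$, so $\theta(s_\gamma) \to \inf\theta(D) \in \RR$. A bounded $\theta$-sequence from a coercive $\theta$ is norm-bounded, and reflexivity of $X$ ensures the existence of weak cluster points, which Theorem~\ref{t:toInfty} places in $\argmin\theta(\overline{D}^w)$; the singleton-convergence statement is then automatic. The main potential obstacle is the delicacy in case \ref{c:toInfty_Breg}, where Proposition~\ref{p:fitz_bregman}'s exceptional set $(\dom f \setminus \dom\partial f) \times \dom\partial f$ must be handled carefully; however, since $y \in D$ already places us outside the exceptional set in the second coordinate, and since the previously established Corollary~\ref{c:env-to-inf}\ref{c:env-to-inf_sharp_Breg} has normalized this reduction, no new difficulty arises.
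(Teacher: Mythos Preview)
Your proposal is correct and follows essentially the same route as the paper: invoke Lemma~\ref{l:domD}\ref{l:domD_sharp} for the domain condition, Remark~\ref{r:closed} for the equality $\inf\theta(\overline{D}^w)=\inf\theta(D)$, then apply Theorem~\ref{t:toInfty} for cases \ref{c:toInfty_genb} and \ref{c:toInfty_closed}, and reduce case \ref{c:toInfty_Breg} to \ref{c:toInfty_genb} with $h=f\oplus f^\ast$ via Proposition~\ref{p:fitz_bregman}. Your explicit justification that openness of $D$ forces $D=\inte\dom f$ (hence $D$ is convex, as Remark~\ref{r:closed}(ii) requires) and your separate coercivity argument for boundedness both fill in details that the paper's terse proof leaves implicit.
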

\begin{proof}
We first have from Lemma~\ref{l:domD}\ref{l:domD_sharp} that $\dom \mathcal{D}_{\partial f}^{\sharp,h} =D\times D$. In view of Remark~\ref{r:closed}, $\inf\theta(\overline{D}^w) =\inf\theta(D)$. So, the conditions of Theorem~\ref{t:toInfty} are satisfied, and we have the desired result in cases \ref{c:toInfty_genb}, and \ref{c:toInfty_closed}.

As shown in the proof of Corollary~\ref{c:env-to-inf}\ref{c:env-to-inf_flat},  $\benv[\star]{\gamma,\theta}(y) =\benv[\star,f\oplus f^\ast]{\gamma,\theta}(y)$, which implies that $\bprox{\gamma,\theta}^{\star} =\bprox{\gamma,\theta}^{\star,f\oplus f^\ast}$. Therefore, the desired result in case \ref{c:toInfty_Breg} follows from \ref{c:toInfty_genb} with $h =f\oplus f^\ast$.
\end{proof}

\begin{corollary}
\label{c:RtoInfty}
Let $f\in \Gamma_0(X)$ with $D :=\dom\partial f$. Suppose that $T =S =\partial f$ and that $h\in \mathcal{H}(\partial f)$ satisfies
\begin{equation}
\forall (x,v)\in \dom f\times \ran\partial f,\quad h(x,v)\leq (f\oplus f^\ast)(x,v) =f(x) +f^\ast(v).
\end{equation}
Let $\theta\colon X\to \left]-\infty, +\infty\right]$ be lsc and coercive with $\inte D\cap \dom\theta\neq \varnothing$, let $\gamma\in \RR_{++}$, and let $x\in D$. Suppose that $X$ is finite-dimensional and that one of the following holds:
\begin{enumerate}
\item\label{c:toInfty_gproxes aenc}
$\mathcal{D}^{\flat,h}_T(x,\cdot)$ is lsc and, for each $\gamma\in \mathbb{R}_{++}$, $s_\gamma :=\fproxs{\gamma,\theta}^{\flat,h}(x)\in \fprox{\gamma,\theta}^{\flat,h}(x)$;
\item\label{c:toInfty_closedb} 
For each $\gamma\in \mathbb{R}_{++}$, $s_\gamma :=\fproxs{\gamma,\theta}^{\flat,\dagger h}(x)\in \fprox{\gamma,\theta}^{\flat,\dagger h}(x)$;
\item\label{c:toInfty_Bregb} 
$\mathcal{D}_{\partial f}^{\flat}(x,\cdot)$ is lsc and, for each $\gamma\in \mathbb{R}_{++}$, $s_\gamma :=\fproxs{\gamma,\theta}^{\flat}(x)\in \fprox{\gamma,\theta}^{\flat}(x)$.
\item 
$\dom\partial f$ is open, $\mathcal{D}^{\sharp,h}_T(x,\cdot)$ is lsc, and, for each $\gamma\in \mathbb{R}_{++}$, $s_\gamma :=\fproxs{\gamma,\theta}^{\sharp,h}(x)\in \fprox{\gamma,\theta}^{\sharp,h}(x)$;
\item\label{c:toInfty_closedc} 
$\dom\partial f$ is open and, for each $\gamma\in \mathbb{R}_{++}$, $s_\gamma :=\fproxs{\gamma,\theta}^{\sharp,\dagger h}(x)\in \fprox{\gamma,\theta}^{\sharp,\dagger h}(x)$;
\item\label{c:toInfty_Bregc} 
$\dom\partial f$ is open, $\mathcal{D}_{\partial f}^{\sharp}(x,\cdot)$ is lsc, and, for each $\gamma\in \mathbb{R}_{++}$, $s_\gamma :=\fproxs{\gamma,\theta}^{\sharp}(x)\in \fprox{\gamma,\theta}^{\sharp}(x)$.
\end{enumerate}
Then $\theta(s_\gamma)\to \inf\theta(D)$ as $\gamma\uparrow +\infty$. Moreover, the net $(s_\gamma)_{\gamma \in \RR_{++}}$ is bounded with all cluster points as $\gamma\uparrow +\infty$ lying in $\argmin\theta(\overline{D})\neq \varnothing$. If additionally $\argmin\theta(\overline{D})$ is a singleton, then $s_\gamma\to \argmin\theta(\overline{D})$ as $\gamma\uparrow +\infty$. 
\end{corollary}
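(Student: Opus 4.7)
The plan is to parallel the proof of Corollary~\ref{c:toInfty}, replacing the left-envelope Theorem~\ref{t:toInfty} by its right-envelope analogue Theorem~\ref{t:RtoInfty}. I will partition the six cases into two groups: the four generalized cases (i), (ii), (iv), (v), and the two classical Bregman cases (iii), (vi).

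For the generalized cases, the task is to verify the hypotheses of Theorem~\ref{t:RtoInfty}. The inclusion $Tz\subseteq Sz$ is trivial because $T=S=\partial f$. The domain condition $\{x\}\times(\dom T\cap\dom\theta)\subseteq\dom\mathcal{D}_T^{\star,h}$ follows from Lemma~\ref{l:domD} combined with the Fenchel--Young upper bound on $h$: part \ref{l:domD_flat} applies to the $\flat$ cases (i) and (ii), giving $\dom\mathcal{D}_{\partial f}^{\flat,h}=D\times D$ without openness; part \ref{l:domD_sharp} applies to the $\sharp$ cases (iv) and (v) under the additional openness of $\dom\partial f$, giving $\dom\mathcal{D}_{\partial f}^{\sharp,h}=D\times D$. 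Lower semicontinuity of $\mathcal{D}_T^{\star,h}(x,\cdot)$ is either assumed in cases (i) and (iv) or automatic from the definition of the lower closure in cases (ii) and (v). Coercivity of $\varphi_\mu(\cdot)=\theta(\cdot)+\tfrac{1}{\mu}\mathcal{D}_T^{\star,h}(x,\cdot)$ follows from the coercivity of $\theta$ together with nonnegativity of the distance, so Proposition~\ref{p:right prox}\ref{p:right prox_theta} supplies nonempty images and the boundedness of the net $(s_\gamma)$. Theorem~\ref{t:RtoInfty}\ref{t:RtoInfty_env}--\ref{t:RtoInfty_gen} (or \ref{t:RtoInfty_closed} for the closed cases) then delivers $\theta(s_\gamma)\to\inf\theta(D)$ and, provided the closure-equality $\inf\theta(\overline{D})=\inf\theta(D)$ holds, the cluster-point and singleton-convergence conclusions.

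For the Bregman cases (iii) and (vi), I reduce to (i) and (iv) by taking $h=f\oplus f^{\ast}$. Since $x\in D=\dom\partial f$, Proposition~\ref{p:fitz_bregman} yields $\mathcal{D}_{\partial f}^{\star,f\oplus f^{\ast}}(x,y)=\mathcal{D}_f^{\star}(x,y)$ for every $y\in X$ (the excluded set in that proposition is vacuous when $x\in\dom\partial f$). Consequently $\fenv[\star]{\gamma,\theta}(x)=\fenv[\star,f\oplus f^{\ast}]{\gamma,\theta}(x)$ and $\fprox{\gamma,\theta}^{\star}(x)=\fprox{\gamma,\theta}^{\star,f\oplus f^{\ast}}(x)$, so the conclusions transfer directly from the corresponding generalized case applied with $h=f\oplus f^{\ast}$.

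The main obstacle I anticipate is the closure-equality $\inf\theta(\overline{D})=\inf\theta(D)$ needed for the cluster-point conclusion: Remark~\ref{r:closed}(ii) is stated for $\theta\in\Gamma_0(X)$, whereas the corollary only assumes $\theta$ lsc and coercive. I plan to address this by exploiting the finite-dimensional setting together with convexity of $\overline{D}=\overline{\dom f}$ and the hypothesis $\inte D\cap\dom\theta\neq\varnothing$: every boundary point of $\overline{D}$ can be approached along a line segment from an interior point of $D\cap\dom\theta$, and lsc of $\theta$ together with coercivity then suffices to transfer the infimum across the boundary in the spirit of Remark~\ref{r:closed}. Everything else is routine bookkeeping against the hypotheses already verified above.
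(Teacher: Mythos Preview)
Your overall structure---invoking Lemma~\ref{l:domD} to obtain $\dom\mathcal{D}_{\partial f}^{\star,h}=D\times D$ (part~\ref{l:domD_flat} for the $\flat$ cases, part~\ref{l:domD_sharp} under openness for the $\sharp$ cases), then applying Theorem~\ref{t:RtoInfty} for the generalized cases and reducing the classical Bregman cases via $h=f\oplus f^{\ast}$---is exactly the paper's approach, whose proof is the single sentence ``similar to Corollary~\ref{c:toInfty}, using Lemma~\ref{l:domD} and Theorem~\ref{t:RtoInfty}.''

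However, your proposed fix for the closure-equality $\inf\theta(\overline{D})=\inf\theta(D)$ does not work. You plan to approach a boundary point $y\in\overline{D}$ along the segment from some $x_0\in\inte D\cap\dom\theta$ and invoke lower semicontinuity. But lsc yields $\theta(y)\leq\liminf_{t\to1^{-}}\theta((1-t)x_0+ty)$, which is the wrong direction: what is needed is $\theta(y)\geq\inf\theta(D)$. The argument behind Remark~\ref{r:closed}(ii) (via \cite[Proposition~11.1(iv)]{BC17}) uses \emph{convexity} of $\theta$ to obtain $\theta((1-t)x_0+ty)\leq(1-t)\theta(x_0)+t\theta(y)$ and then lets $t\to1$; absent convexity, an lsc coercive $\theta$ can dip strictly below $\inf\theta(D)$ on $\overline{D}\setminus D$. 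You are right to flag the issue---the paper's one-line proof simply points to Corollary~\ref{c:toInfty}, whose hypothesis $\theta\in\Gamma_0(X)$ is precisely what makes Remark~\ref{r:closed}(ii) applicable there but is not assumed here---yet your substitute argument does not close the gap. One must either assume $\theta$ convex, or $D$ closed (Remark~\ref{r:closed}(i)), or treat $\inf\theta(\overline{D})=\inf\theta(D)$ as an additional hypothesis.
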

\begin{proof}
The proof is similar to Corollary~\ref{c:toInfty}, using Lemma~\ref{l:domD} and Theorem~\ref{t:RtoInfty}.
\end{proof}

When $f$ is differentiable, the results for the classical Bregman envelopes given in Corollaries~\ref{c:toInfty} and \ref{c:RtoInfty} generalize those in \cite[Theorem~3.5]{BDL17}.

\subsection*{Examples of envelopes and proximity operators}\label{sec:examples}

Theorems~\ref{t:to0}--\ref{t:RtoInfty} and Corollaries~\ref{c:env-to-inf}, \ref{c:toInfty}, and \ref{c:RtoInfty} are important for several reasons. They reveal that some of the asymptotic results provided in \cite[Propositions~2.2 \& 3.2, Theorems~3.3 \& 3.5]{BDL17} for Bregman envelopes are only a special case of a class results that hold for envelopes constructed from representative functions for maximally monotone operators. We also note that we have also here provided a clarification of the domain conditions in the exposition of \cite[Propositions~2.2 \& 3.2]{BDL17}, namely, that the asymptotic results are, of course, restricted to the domain of the distance operator.

\begin{figure}
\begin{center}
\subfloat[{$\protect\benv[\dagger F_{\log}]{\gamma,\theta}$}\label{fig:entropy_left_fitz}]{\trimplot{fitzpatrick_left_envelope_entropy_small}}
\subfloat[{$\protect\benv[\dagger \ent \oplus \ent^*]{\gamma,\theta}$}\label{fig:entropy_left_bregman}]{\trimplot{bregman_left_envelope_entropy_small}}
\subfloat[{$\protect\benv[\dagger \sigma_{\log}]{\gamma,\theta}$}\label{fig:entropy_left_sigma}]{\trimplot{fitzpatrick_conjugate_left_envelope_entropy_small}}
\subfloat[key]{\begin{adjustbox}{trim={0.35\width} {0.16\height} {0.4\width} {0.16\height},clip=true}
\includegraphics[width=.4\textwidth]{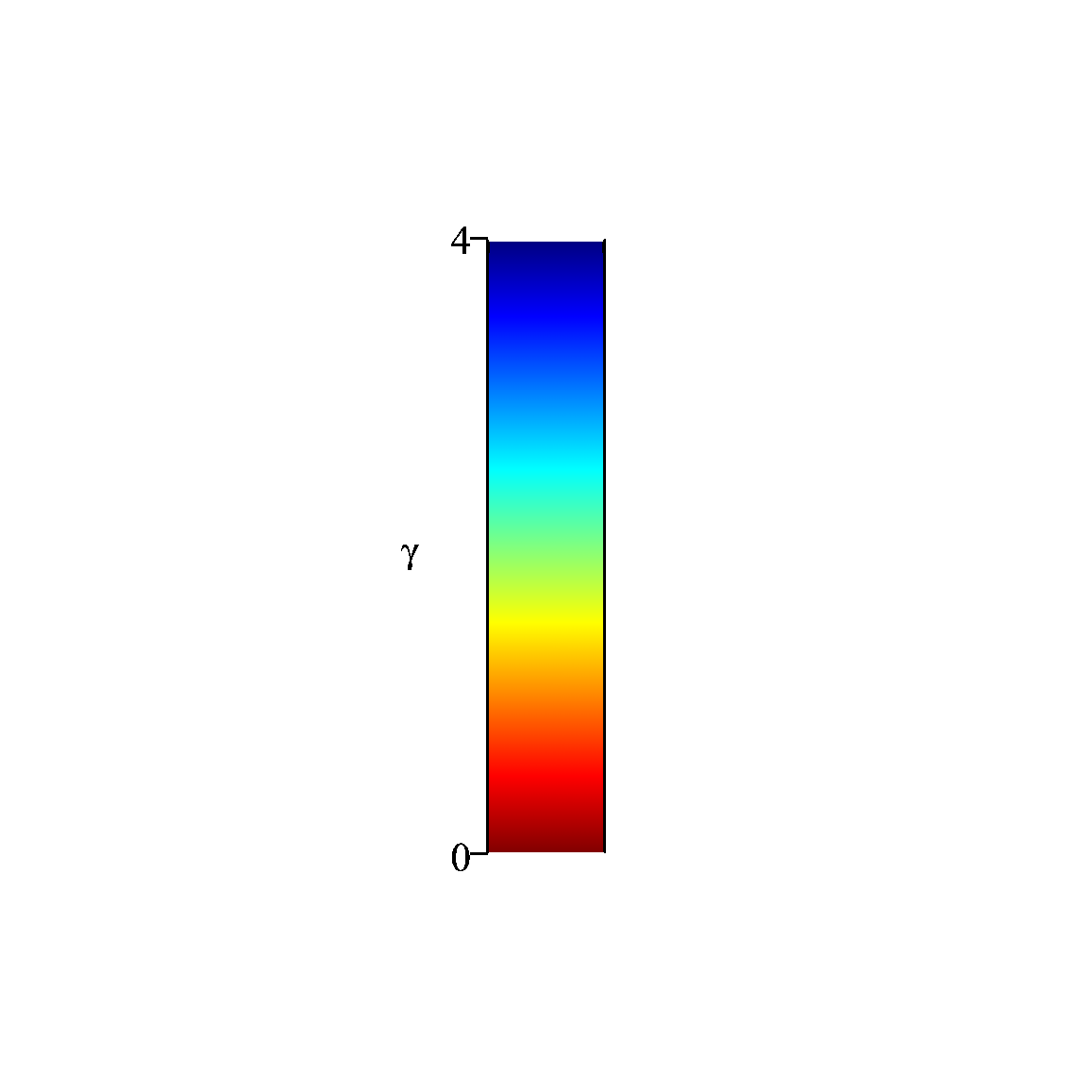}
\end{adjustbox}}
\end{center}
\caption{Left envelopes for representative functions of the logarithm.}\label{fig:entropy_left}
\end{figure}

We illustrate these connections with the envelopes that correspond to the distances $\overline{\mathcal{D}}^{F_{\log}}$, $\overline{\mathcal{D}}^{\ent \oplus \ent^*}$, and $\overline{\mathcal{D}}^{\sigma_{\log}}$ from Examples~\ref{ex:Fitzlog}, \ref{ex:KullbackLiebler}, and \ref{ex:sigmalog} respectively, which are illustrated in Figure~\ref{fig:D}. The derivation of these distances may be found in \cite{BDL19}. The left envelopes are shown in Figure~\ref{fig:entropy_left}, and the right envelopes are shown in Figure~\ref{fig:entropy_right}. For both figures, we use the closed versions of the distances. The explicit forms for the envelopes and their proximity operators are given in Appendix~\ref{sec:appendix}.

\begin{figure}
\begin{center}
\subfloat[{$\protect\fenv[\dagger F_{\log}]{\gamma,\theta}$}\label{fig:entropy_right_fitz}]{\trimplot{fitzpatrick_right_envelope_entropy_small}}
\subfloat[{$\protect\fenv[\dagger \ent \oplus \ent^*]{\gamma,\theta}$}\label{fig:entropy_right_bregman}]{\trimplot{bregman_right_envelope_entropy_small}}
\subfloat[{$\protect\fenv[\dagger \sigma_{\log}]{\gamma,\theta}$}\label{fig:entropy_right_sigma}]{\trimplot{fitzpatrick_conjugate_right_envelope_entropy_small}}
\subfloat[key]{\begin{adjustbox}{trim={0.35\width} {0.16\height} {0.4\width} {0.16\height},clip=true}
\includegraphics[width=.4\textwidth]{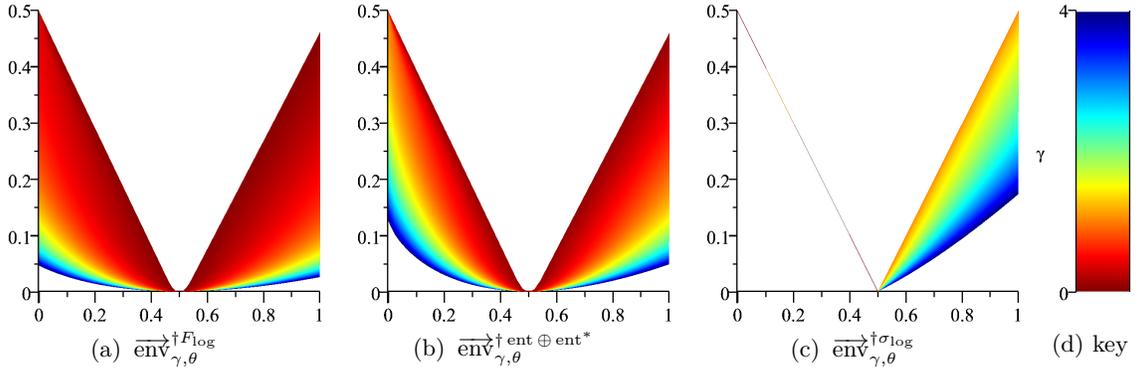}
\end{adjustbox}}
\end{center}
\caption{Right envelopes for representative functions of the logarithm.}\label{fig:entropy_right}
\end{figure}

Let us first discuss how these examples illustrate Theorems~\ref{t:to0} \& \ref{t:Rto0}. As the parameter $\gamma \to 0$, all of the envelopes in Figures~\ref{fig:entropy_left} and Figure~\ref{fig:entropy_right} exhibit the behaviour of approaching $\theta$, the Legendre function being regularized. This is, at first, less clear when the representative function employed is $\sigma_{\log}$ as in Figures~\ref{fig:entropy_left_sigma} and \ref{fig:entropy_right_sigma}, and so some clarification is in order. For any $\gamma \in \left]0,+\infty \right[$, the function $\protect\benv[\dagger \sigma_{\log}]{\gamma,\theta}$ is exactly equal to $\theta$ on $\left[0,1/2 \right]$, while the function $\protect\fenv[\dagger \sigma_{\log}]{\gamma,\theta}$ is exactly equal to $\theta$ on $\left[1/2,1 \right]$. This is why the net of curves collapses to a single line segment on $\left[0,1/2 \right]$ in Figure~\ref{fig:entropy_left_sigma} and on $\left[1/2,1 \right]$ in Figure~\ref{fig:entropy_right_sigma}. The reason for this may be found by scrutinizing the distance $\overline{\mathcal{D}}^{\sigma_{\log}}$ in \eqref{eqn:Dsigma} and in Figure~\ref{D:sigma}, and observing that the distance takes the value infinity whenever the right variable is greater than the left. For this reason, the sum $\theta(\cdot)+\overline{\mathcal{D}}^{\sigma_{\log}}(\cdot,y)$ is minimized at $y$ for $y\geq 1/2$, while the sum $\theta(\cdot)+\overline{\mathcal{D}}^{\sigma_{\log}}(x,\cdot)$ is minimized at $x$ for $x\leq 1/2$.

The case of $\overline{\mathcal{D}}^{\sigma_{\log}}$ is also instrumental in understanding Theorems~\ref{t:toInfty} \& \ref{t:RtoInfty} and Corollary~\ref{c:env-to-inf}. For the reasons we have just discussed, the condition $\theta(\bproxs{\gamma,\theta}^{\dagger \sigma_{\log}}(y))\to \inf\theta(U)$ as $\gamma\uparrow +\infty$ fails to hold for $y \in \left]1/2,1 \right]$. Consequently, for $y \in \left]1/2,1\right]$ the condition $\benv[\dagger \sigma_{\log}]{\gamma,\theta}(y)\downarrow \inf\theta(U)$ as $\gamma\uparrow +\infty$ does not hold. An analogous situation arises for $\fenv[\dagger \sigma_{\log}]{\gamma,\theta}(x)$ in the case of $x \in \left[0,1/2\right[$. As previously mentioned in Examples~\ref{ex:dist:energy} and \ref{ex:env:energy}, $\mathcal{D}^{\sigma_{\Id}}=\iota_{\mathcal{G}_{\Id}}$ is an even more dramatic case where the GBD envelope does not asymptotically approach $\inf \theta(U)$.

On the other hand, for $y \in \left[0,1/2 \right]$ we have that $\theta(\bproxs{\gamma,\theta}^{\dagger \sigma_{\log}}(y))\to \inf\theta(U)$ and $\benv[\dagger \sigma_{\log}]{\gamma,\theta}(y)\downarrow \inf\theta(U)$ as $\gamma\uparrow +\infty$. Similarly, for $x \in \left[1/2,1 \right]$ we have that $\theta(\fproxs{\gamma,\theta}^{\dagger \sigma_{\log},\sigma_{\log}}(y))\to \inf\theta(U)$ and $\fenv[\dagger \sigma_{\log}]{\gamma,\theta}(y)\downarrow \inf\theta(U)$ as $\gamma\uparrow +\infty$. 

The loss of some desirable asymptotic properties for the largest member of $\mathcal{H}(\partial f)$ highlights the advantage of Corollary~\ref{c:toInfty}, which assures us that $\benv[\dagger F_{\log}]{\gamma,\theta}(y)\downarrow \inf\theta(U)$ and $\fenv[\dagger F_{\log}]{\gamma,\theta}(x)\downarrow \inf\theta(U)$ as $\gamma \uparrow +\infty$, because $F_{\log} \leq \ent \oplus \ent^*$. We see this property illustrated in Figures~\ref{fig:entropy_left_fitz} and \ref{fig:entropy_right_fitz}. Comparing with Figures~\ref{fig:entropy_left_bregman} and \ref{fig:entropy_right_bregman}, we can also see the essential property from the proof of Corollary~\ref{c:toInfty}: that envelopes built from smaller representative functions than the Fenchel--Young representative are majorized thereby.

Finally, from a theoretical standpoint, the case of $\fprox{\gamma,\theta}^{\dagger F_{\log}}$ is illustrative of why we had to employ selection operators in our analysis of the proximity operators, because it is set valued for $x=0$ and $e=1/\gamma$.

\section{Conclusion}\label{sec:conclusion}

In Section~\ref{sec:preliminaries}, we recalled the theory of GBDs \cite{BM-L18} and the coercivity framework established in \cite{BDL19}. We also introduced the specific computed distances from \cite{BDL19} that we have used to build the envelopes and proximity operators in the current exposition, along with an explanation of why they are natural distances to consider. In Section~\ref{sec:envelopes}, we introduced the left and right envelopes for the GBDs, along with their associated proximity operators. In Section~\ref{sec:asymptotic}, we provided a selection of asymptotic results. The examples in Section~\ref{sec:examples} illustrate how the results in the setting of GBDs vary from those we obtain more easily when specializing to Bregman distances. 

Our analysis also yields results on the Bregman case when specializing thereto by using the Fenchel--Young representative. Pleasingly, the desirable asymptotic properties for Bregman distances extend to GBDs constructed from Fitzpatrick representatives, which suggests that such distances may be a useful subject for specialized investigation. Many important optimization algorithms may be studied as special cases of gradient descent applied to envelopes; now that a sufficient coercivity framework has been developed \cite{BDL19} and distances with desirable asymptotic envelope properties identified, two natural follow-up questions present themselves. The first is: excluding the already known Bregman cases, are there useful descent algorithms---discovered or undiscovered---whose analysis may fit within such a framework? The second is: are there previously unknown GBDs---for example, GBDs constructed from Fitzpatrick representatives---whose forms admit computational advantages over their Bregman counterparts?

\subsection*{Acknowledgements}

The authors are grateful to the two anonymous referees for their careful comments and suggestions. Part of this work was done during MND's visit to the University of South Australia in 2018 to whom he acknowledges the hospitality. SBL was supported by an Australian Mathematical Society Lift-Off Fellowship, and by Hong Kong Research Grants Council PolyU153085/16p.

\appendix
\section{Appendix: Closed forms for envelopes and proximity operators}\label{sec:appendix}

For all our examples, we use the closed distances as described in Remark~\ref{r:lsc}. The function denoted $\W$ is the principal branch of the Lambert $\W$ function, whose occurrences in variational analysis have been discussed in, for example, \cite{BL2018,BL2017,Knuth}. 

\begin{example}[Left prox and envelope for $\overline{\mathcal{D}}^{F_{\log}}$]

Beginning with the smallest member of $\mathcal{H}(\log)$, we first consider the left envelope and corresponding proximity operator characterized by 
\begin{align*}
\benv[\dagger F_{\log}]{\gamma,\theta}(x)&= \underset{y \in \RR_+}{\inf} \{\theta(y)+\frac{1}{\gamma}\overline{\mathcal{D}}^{F_{\log}}(y,x)  \}\\
&=\theta(\bprox{\gamma,\theta}^{\dagger F_{\log}} (x))+\frac{1}{\gamma}\overline{\mathcal{D}}^{F_{\log}}\left(\bprox{\gamma,\theta}^{\dagger F_{\log}} (x),x \right),
\end{align*}
where $F_{\log}$ is given in \cite[Example~3.6]{BMcS06} and $\overline{\mathcal{D}^{F_{\log}}}$ is in \eqref{fitz:dist}. We have that
\begin{align*}
\bprox{\gamma,\theta}^{\dagger F_{\log}}(x) &= \begin{cases}
\left(\frac{1}{\gamma}+1\right)e^{\gamma}\gamma x & \text{if~}  x\leq \frac{e^{-\gamma}}{2+2\gamma};\\
\left(\frac{1}{\gamma}-1\right)e^{-\gamma}\gamma x & \text{if~}  \frac{e^\gamma}{2-2\gamma} < x;\\
\frac{1}{2} & \text{otherwise},
\end{cases}\\
\text{and}\quad \benv[\dagger F_{\log}]{\gamma,\theta}(x)&=\begin{cases}
\frac{1}{2}-e^{\gamma}x & \text{if~}  x\leq \frac{e^{-\gamma}}{2+\gamma};\\
\gamma \left( \left(\frac{1}{\gamma}-1 \right)e^{-\gamma}x +e^\gamma x  \right) -\frac{1}{2} & \text{if~}  x> \frac{e^\gamma}{2-\gamma};\\
\frac{\left(\W\left(\frac{e}{2x} \right)-1 \right)^2}{2\gamma \W \left(\frac{e}{2x} \right)} & \text{otherwise}.
\end{cases}
\end{align*}
The envelope is shown in Figure~\ref{fig:entropy_left_fitz}.
\end{example}

\begin{example}[Right prox and envelope for $\overline{\mathcal{D}}^{F_{\log}}$]

We next consider the right envelope and corresponding proximity operator characterized by 
\begin{align*}
\fenv[\dagger F_{\log}]{\gamma,\theta}(x)&= \underset{y \in \RR_+}{\inf} \{\theta(y)+\frac{1}{\gamma}\overline{\mathcal{D}}^{F_{\log}}(x,y)  \}\\
&=\theta(\fproxs{\gamma,\theta}^{\dagger F_{\log}} (x))+\frac{1}{\gamma}\overline{\mathcal{D}}^{F_{\log}}\left(x,\fproxs{\gamma,\theta}^{\dagger F_{\log}} (x)\right),\\
\text{where}\quad \fproxs{\gamma,\theta}^{\dagger F_{\log}} (x) &\in \fprox{\gamma,\theta}^{\dagger F_{\log}} (x).
\end{align*}
We use the selection operator $\fproxs{\gamma,\theta}^{F_{\log}}$ because, in this case, the prox operator $\fprox{\gamma,\theta}^{F_{\log}}$ is set-valued:
\begin{align*}
\fprox{\gamma,\theta}^{\dagger F_{\log}}(x) = \begin{cases}
0 & \text{if~}  x=0 \;\text{and} \; e<\frac{1}{\gamma};\\
[0,\frac{1}{2}] & \text{if~}  x=0 \;\text{and}\; \frac{1}{\gamma} = e;\\
\frac{1}{2} & \text{if~}  x=0 \;\text{and}\; \frac{1}{\gamma}< e;\\
-\frac{\gamma x \W \left( -\frac{1}{\gamma} \right)}{\W \left( -\frac{1}{\gamma} \right)+1} & \text{if~}  0<x<-\frac{\gamma (\W(-\gamma)+1) }{2\W(-\gamma)} \; \text{and} \; e<\frac{1}{\gamma};\\
\frac{\gamma x \W \left(\frac{1}{\gamma} \right)}{\W \left(\frac{1}{\gamma} \right)+1} & \text{if~}  \frac{\gamma (\W(\gamma)+1) }{2\W(\gamma)}<x;\\
\frac{1}{2} & \text{otherwise}. 
\end{cases}
\end{align*}
The corresponding right envelope is
\begin{align*}
\fenv[\dagger F_{\log}]{\gamma,\theta}(x)=& \begin{cases}
\frac{1}{2} & \text{if~}  x=0 \;\text{and}\;e \leq \frac{1}{\gamma};\\
\frac{1}{2\gamma e} & \text{if~}  x=0 \;\text{and}\; \frac{1}{\gamma} < e;\\
\frac{x\W(\gamma)}{\gamma(\W(\gamma)+1)}-\frac{1}{2}+\frac{x \left(\W \left( \frac{e\gamma(\W(\gamma)+1)}{\W(\gamma)} \right) -1 \right)^2}{\gamma \W \left( \frac{e\gamma(\W(\gamma)+1)}{\W(\gamma)} \right)} & \text{if~}  \frac{\gamma(\W(\gamma)+1)}{2\W(\gamma)}<x;\\
\frac{x\W(-\gamma)}{\gamma(\W(-\gamma)+1)}+\frac{1}{2}+\frac{x \left(\W \left(- \frac{e\gamma(\W(-\gamma)+1)}{\W(-\gamma)} \right) -1 \right)^2}{\gamma \W \left(- \frac{e\gamma(\W(-\gamma)+1)}{\W(-\gamma)} \right)} & \text{if~}  0<x<-\frac{\gamma(\W(-\gamma)+1)}{2\W(-\gamma)}\;\text{and}\; e<\frac{1}{\gamma};\\
\frac{x(\W(2xe)-1)^2}{\gamma \W(2xe)} & \; \text{otherwise}.
\end{cases}
\end{align*}
The envelope is shown in Figure~\ref{fig:entropy_right_fitz}.
\end{example}

\begin{example}[Left prox and envelope for {$\overline{\mathcal{D}}^{\sigma_{\log}}$}]

Turning to the biggest of the representative functions for the logarithm, we next consider the left envelope and corresponding proximity operator characterized by
\begin{align*}
\benv[\dagger \sigma_{\log}]{\gamma,\theta}(x)&= \underset{y \in \RR_+}{\inf} \{\theta(y)+\frac{1}{\gamma}\overline{\mathcal{D}}^{\sigma_{\log}}(y,x)  \}\\
&=\theta(\bprox{\gamma,\theta}^{\dagger \sigma_{\log}} (x))+\frac{1}{\gamma}\overline{\mathcal{D}}^{\sigma_{\log}}(\bprox{\gamma,\theta}^{\dagger \sigma_{\log}} (x),x).
\end{align*}
where $\overline{\mathcal{D}^{\sigma_{\log}}}$ is as in \ref{eqn:Dsigma}. We have that
\begin{equation*}
\bprox{\gamma,\theta}^{\dagger \sigma_{\log}} (x)= \begin{cases}
\frac{x}{e^{-\gamma+1}} & \text{if~} x \leq \frac{1}{2}e^{-\gamma+1}\;\text{and}\; 1\leq \gamma;\\
x & \text{if~}  x \geq \frac12 \;\text{or}\; \gamma<1;\\
\frac12 & \text{otherwise}.
\end{cases}
\end{equation*}
The corresponding envelope is
\begin{equation*}
\benv[\sigma_{\log}]{\gamma,\theta}(x) = \begin{cases}
0 & \text{if~}  x=0;\\
x-\frac12 & \text{if~}  x \geq \frac12;\\
\frac12 -x & \text{if~}  x < \frac12 \;\text{and}\; \gamma < 1;\\
-\frac{x}{\gamma}e^{\gamma-1}\left(-\log \left(xe^{\gamma-1} \right)+\gamma+\log(x) \right) +\frac12 & \text{if~} x \leq \frac12 e^{-\gamma+1}\;\text{and}\; 1 \leq \gamma ;\\
-\frac{1}{2\gamma}\log(2x) & \text{otherwise}.
\end{cases}.
\end{equation*}
The envelope is shown in Figure~\ref{fig:entropy_left_sigma}.
\end{example}

\begin{example}[Right prox and envelope for {$\overline{\mathcal{D}}^{\sigma_{\log}}$}]
We consider the right envelope and corresponding proximity operator characterized by 
\begin{align*}
\fenv[\dagger \sigma_{\log}]{\gamma, \theta}(x)&= \underset{y \in \RR_+}{\inf} \{\theta(y)+\frac{1}{\gamma}\overline{\mathcal{D}}^{\sigma_{\log}}(x,y)  \}\\
&=\theta(\fprox{\gamma,\theta}^{\dagger \sigma_{\log}} (x))+\frac{1}{\gamma}\overline{\mathcal{D}}^{\sigma_{\log}}\left(x,\fprox{\gamma,\theta}^{\dagger \sigma_{\log}} (x)\right).
\end{align*}
The corresponding proximity operator is
\begin{equation*}
\fprox{\gamma,\theta}^{\dagger \sigma_{\log}} (x)=\begin{cases}
\frac12 & \text{if~}  \frac12 < x \;\text{and}\; x\leq \frac12\gamma ;\\
\frac{x}{\gamma} & \text{if~}  1 \leq \gamma \;\text{and}\; \frac12 \gamma < x ;\\
x &\text{otherwise},
\end{cases}
\end{equation*}
while the corresponding envelope is
\begin{equation*}
\fenv[\dagger \sigma_{\log}]{\gamma, \theta}(x) = \begin{cases}
\frac12-x & \text{if~}  x \leq \frac12;\\
x-\frac12 & \text{if~}  \frac12 < x \;\text{and}\; \gamma \leq 1;\\
-\frac{1}{2}+\frac{x}{\gamma}(1+\log(\gamma)) & \text{if~}  1\leq \gamma \;\text{and}\; \frac{\gamma}{2} < x ;\\
\frac{x}{\gamma}\log(2x) & \text{otherwise}.
\end{cases}
\end{equation*}
The envelope is shown in Figure~\ref{fig:entropy_right_sigma}.
\end{example}

The operators in Example~\ref{ex:bregman} (and their computation) may be found in \cite{BDL17}, with the minor modification that here we are computing with the lower closure of the distance and so obtain closed forms which differ at zero. We include them here for their comparison with the new GBDs for the logarithm.

\begin{example}[Proxes and envelopes for $\overline{\mathcal{D}}^{\ent \oplus \ent^*}$]\label{ex:bregman}
We next consider the case when $\overline{\mathcal{D}}^{\ent \oplus \ent^*}$ is the closed GBD for the Fenchel--Young representative of $\log$ \eqref{def:Bregman_lower_closure}, a case whose relationship to the Bregman distance for the Boltzmann--Shannon entropy is discussed in Example~\ref{ex:KullbackLiebler}. The left proximity operator and envelope are given by
\begin{subequations}
\begin{align*}
\bprox{\gamma,\theta}^{\dagger \ent \oplus \ent^*}(y) &=\begin{cases}
y\exp(\gamma), &\text{~if~} 0 \leq y <\tfrac{1}{2}\exp(-\gamma);
\\[+2mm]
y\exp(-\gamma), &\text{~if~} y >\tfrac{1}{2}\exp(\gamma);
\\[+2mm]
\tfrac{1}{2}, &\text{~otherwise},
\end{cases}\\[+4mm]
\benv[\dagger \ent \oplus \ent^*]{\gamma,\theta}(y) &=\begin{cases}
\tfrac{y(1-e^\gamma)}{\gamma} + \tfrac{1}{2}, &\text{~if~} 0 \leq y
<\tfrac{1}{2}\exp(-\gamma); \\[+2mm]
\tfrac{y(1-e^{-\gamma})}{\gamma} - \tfrac{1}{2}, &\text{~if~}
\tfrac{1}{2}\exp(\gamma) <y; \\[+2mm]
\tfrac{2y- \ln(y) -1 -\ln(2)}{2\gamma}, &\text{~otherwise}.
\end{cases}
\end{align*}
\end{subequations}
For details, see \cite[Example~4.1(ii)]{BDL17}. The envelope is shown in Figure~\ref{fig:entropy_left_bregman}.
The right proximity operator and envelope are given by
\begin{align*}
\fprox{\gamma,\theta}^{\dagger \ent \oplus \ent^*}(x) &=\begin{cases}
\tfrac{x}{1 -\gamma}, &\text{~if~} 0 \leq x <\tfrac{1 -\gamma}{2};
\\[+2mm]
\tfrac{x}{1 +\gamma}, &\text{~if~} x >\tfrac{1 +\gamma}{2};
\\[+2mm]
\tfrac{1}{2}, &\text{~otherwise}.
\end{cases}\\
\fenv[\dagger \ent \oplus \ent^*]{\gamma, \theta}(x) &=\begin{cases}
\tfrac{\ln(1 -\gamma)}{\gamma}x + \tfrac{1}{2}, &\text{~if~} 0 \leq x
<\tfrac{1 -\gamma}{2}; \\[+2mm]
\tfrac{\ln(1 +\gamma)}{\gamma}x - \tfrac{1}{2}, &\text{~if~} x
>\tfrac{1 +\gamma}{2}; \\[+2mm]
\tfrac{1}{\gamma}\left(x\ln(2x) -x +\tfrac{1}{2}\right), &\text{~otherwise}.
\end{cases}
\end{align*}
For details, see \cite[Example~4.1(ii)]{BDL17}. The envelope is shown in Figure~\ref{fig:entropy_right_bregman}.
\end{example}

\end{document}